\newcommand{\A}{\hat{\mathbb{A}}}
\newcommand{\G}{\hat{\mathbb{G}}}
\renewcommand{\epsilon}{\varepsilon}
\newcommand{\Z}{\mathbb{Z}}
\newcommand{\Q}{\mathbb{Q}}
\newcommand{\F}{\mathbb{F}}
\newcommand{\N}{\mathbb{N}}
\newcommand{\C}{\mathbb{C}}
\newcommand{\CP}{\C\mathrm{P}}
\newcommand{\fc}[2]{\zeta{}^{#1}_{#2}}
\newcommand{\<}{\langle}
\renewcommand{\>}{\rangle}
\newcommand{\isoto}{\xrightarrow{\cong}}
\newcommand{\into}{\hookrightarrow}
\newcommand{\CatOf}[1]{\mathsf{#1}}
\newcommand{\torsor}[1]{\mathscr{#1}}
\newcommand{\sheaf}[1]{\mathcal{#1}}
\newcommand{\drawbox}[1]{\begin{array}{|l|}\hline #1 \\ \hline\end{array}}
\DeclareMathOperator{\spec}{Spec}
\DeclareMathOperator{\spf}{Spf}
\DeclareMathOperator{\Hom}{Hom}
\DeclareMathOperator{\im}{im}
\DeclareMathOperator{\Ext}{Ext}
\DeclareMathOperator{\Aut}{Aut}
\DeclareMathOperator{\id}{id}
\newtheoremstyle{thmstyle}{\topsep}{0.5em}{}{}{\bf}{:}{0.5em}{}
\theoremstyle{thmstyle}
\newtheorem{thm}{Theorem}[section]
\newtheorem{defn}[thm]{Definition}
\newtheorem{cor}[thm]{Corollary}
\newtheorem{lem}[thm]{Lemma}
\newtheorem{rem}[thm]{Remark}
\newtheorem{ithm}{Theorem}
\title{Multiplicative 2-cocycles at the prime 2}
\author{Adam Hughes, JohnMark Lau, Eric Peterson
%
}
\begin{document}

\maketitle

\begin{abstract}
Using a previous classification result on symmetric additive $2$-cocycles, we collect a variety of facts about the Lubin-Tate cohomology of formal groups to compute the $2$-primary component of the scheme of symmetric multiplicative $2$-cocycles.  This scheme classifies certain kinds of highly symmetric multiextensions, as studied in general by Mumford or Breen.  A low-order version of this computation has previously found application in homotopy theory through the $\sigma$-orientation of Ando, Hopkins, and Strickland, and the complete computation is reflective of certain structure found in the homotopy type of connective $K$-theory.
\end{abstract}

\section{Introduction}

Biextensions and cubical structures are classically studied objects in algebraic geometry, appearing most prominently perhaps in the work of Mumford~\cite{Mumford} in the context of understanding Weil pairings.  The moduli problem associated to these objects appears as a low order of a family of moduli spaces associated to ``$\Theta^k$-structures,'' as studied by Breen~\cite{Breen}.  These low-order moduli objects have been previously studied in the context of homotopy theory by Ando, Hopkins, and Strickland~\cite{AHS,AS}, where they appear as the functors represented by the homology of certain connective covers of complex $K$-theory.  They go on to demonstrate that this computation for $k = 3$ gives rise to a canonical multiplicative map $MU\<6\> \to E$ for any elliptic spectrum $E$, and so analysis of this problem fits into the broader program of understanding the geometry controlling elliptic cohomology.  We continue their program by analyzing these moduli objects for $k > 3$ and to outline the apparent connection to $BU \< 2k \>$.

In order to exhibit these maps, it is necessary to explicitly compute the ring of functions on this moduli space, which they accomplish for $k \le 3$ by first computing an approximating scheme and then making an extensive case analysis of that data.  In a previous paper~\cite{HLP}, we constructed a similar approximating scheme for arbitrary $k$ and computed it in full.  In this paper, we compute the $2$-primary component of the original moduli objects of $\Theta^k$-structures for arbitrary $k$.  The main result is:
\begin{ithm} We make the computation
\begin{align*}
\sheaf{O}(\spec \Z_{(2)} \times C^k(\G_a; \G_m)) & = \Z_{(2)}[z_n \mid \nu_2 \phi(n, k) \le \nu_2 n] \otimes \\
& \otimes \Gamma[b_{n, \gamma_2(n, k)} \mid \nu_2 \phi(n, k) > \nu_2 n] \otimes \\
& \otimes \Z_{(2)}[b_{n, i} \mid \gamma_2(n, k) < i < D_{n, k}] / \<2b_{n,i}, b_{n,i}^2\>.
\end{align*}
where $D_{n, k}$ is the coefficient of the generating function \[\prod_{i=0}^\infty \frac{1}{1 - tx^{2^i}} = \sum_{n, k} D_{n, k} x^n t^k,\] $\phi(n, k)$ is defined by \[\phi(n, k) = \gcd_{\lambda} \binom{|\lambda|}{(\lambda_1, \ldots, \lambda_k)} = \gcd_{\lambda} \left( n! \prod_i (\lambda_i!)^{-1} \right),\] $\nu_p(n)$ denotes the number of times into which $p$ divides $n$, and $\gamma_p(n, k)$ denotes the function $\gamma_p(n, k) = \max\{0,\min\{k-\sigma_p(n),\nu_p(n)\}\}$, where $\sigma_p(n)$ denotes the $\N$-valued $p$-adic digital sum of $n$.
\end{ithm}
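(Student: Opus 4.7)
The strategy would be to reduce to the previously completed additive computation of $C^k(\G_a; \G_a)$ from \cite{HLP} by comparing the $\G_m$-coefficient cocycle scheme to the $\G_a$-coefficient one along the logarithm, and then to carefully track the obstructions introduced when the logarithm fails to be an isomorphism at the prime $2$.

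The first step is to set up a comparison sequence. Over $\Z_{(2)}[\![t]\!]$ the logarithm $\log\colon \G_m \to \G_a$ and its exponential inverse do not yield an isomorphism on the nose, but they do fit into an Artin--Hasse-style exact sequence whose kernel and cokernel are explicit $2$-primary group schemes. Applying the functor $C^k(\G_a; -)$ produces a long exact sequence in which the middle term is the scheme we want to compute, one flanking term is known from \cite{HLP}, and the connecting maps are given by $2$-adic multiples of explicit binomial coefficients $\binom{|\lambda|}{\lambda_1,\ldots,\lambda_k}$. This is the skeleton on which the rest of the argument hangs.

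The second step is to match the three factors in the statement to three distinct behaviors in this long exact sequence. A degree-$n$ additive cocycle lifts freely to a multiplicative one precisely when the obstruction class, a divisibility of binomial coefficients, vanishes; this divisibility is controlled by the comparison of $\nu_2 \phi(n,k)$ with $\nu_2 n$, and gives the polynomial generator $z_n$ of the first factor. When the lift only exists after passing to divided powers, the required height is exactly $\gamma_2(n,k)$, producing the middle $\Gamma[b_{n,\gamma_2(n,k)}]$ factor. The third factor collects the residual classes arising because the additive approximation has $D_{n,k}$ independent generators in total degree $n$, of which all but $\gamma_2(n,k)+1$ acquire both a $2$-torsion and a square-zero relation upon imposing multiplicativity.

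I expect the principal obstacle to be the explicit verification of the relations $2 b_{n,i} = 0$ and $b_{n,i}^2 = 0$ on the extra generators. These are genuine characteristic-$2$ phenomena with no odd-primary analogue; establishing them seems to require a Witt-vector-style congruence argument inside the cocycle ring, propagated through the filtration by total degree. The identification of $D_{n,k}$ by the partition generating function $\prod_i (1 - t x^{2^i})^{-1}$ strongly suggests that the correct combinatorial model is indexed by $2$-adic partitions with Frobenius-twisted bookkeeping, and that is where I would look first for a clean inductive proof that closes off both relations simultaneously.
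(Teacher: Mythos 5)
The skeleton you propose --- an ``Artin--Hasse-style exact sequence'' relating $\G_m$ and $\G_a$ over $\Z_{(2)}$, to which you apply $C^k(\G_a;-)$ and read off a long exact sequence with known flanking term and binomial-coefficient connecting maps --- is where the argument breaks. The logarithm is a homomorphism of formal groups only rationally, and after reduction mod $2$ the groups $\G_a$ and $\G_m$ have different heights, so there is no exact sequence of formal group schemes having $\G_m$ and $\G_a$ as two of its terms with an explicit $2$-primary kernel or cokernel; even granting some Witt-vector-style resolution, the functor $C^k(\G_a;-)$ of symmetric $2$-cocycles is only defined as a kernel, and you have not identified any derived functors or justified that it converts your sequence into a long exact one, let alone that the connecting maps are multiples of $\binom{n}{\lambda_1,\ldots,\lambda_k}$. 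The divisibility $\nu_2\phi(n,k)\le\nu_2 n$ does govern the freely extending classes, but in the paper this enters through an explicit construction --- the Artin--Hasse exponential produces a multiplicative extension of $c\,\fc{n}{k}$ when the divisibility holds --- not through exactness of a comparison sequence.

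More seriously, the step you defer, namely the relations $b_{n,i}^2=0$, is precisely the mathematical content of the theorem, and a ``Witt-vector-style congruence'' is not how it is obtained. The paper filters the cochain complex by leading degree to get a tangent spectral sequence $H^*(\G_a;\G_a)\Rightarrow H^*(\G_a;\G_m)$ and computes the nonvanishing differentials $d_{2^i+2^j}(c\,a_ia_j)=c^2(a_i^2a_{j+1}-a_{i+1}a_j^2)$ over $\F_2$, which force the leading coefficient of any extension of an asymmetric class to square to zero. Transporting this bivariate obstruction to $k$-variate cocycles requires the half-Weil form $e=\prod_{i=1}^{p-1}u(ix_1,x_1,\ldots,x_{k-1})$ with $\delta_1 e=u^p$, whose additive part lands on the obstructed classes exactly when the coefficients $r_{2^n,2^m,I}$ of $u_+$ are asymmetric; a connectivity argument on the graph of power-of-two partitions then shows every additive cocycle other than a multiple of $\fc{n}{2}$ is so obstructed, while the symmetric class is obstructed precisely when $\nu_2\phi(n,k)>\nu_2 n$, producing the divided-power factor. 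Finally, $2b_{n,i}=0$ (as opposed to $2^jb_{n,i}=0$) is not proved multiplicatively at all: it is inherited from the integral additive classification of the previous paper. None of these ingredients --- the spectral sequence differential, the half-Weil pairing, the partition-graph connectivity, and the appeal to the additive $\Z_{(2)}$ result --- appears in your outline, so as it stands the proposal reduces the theorem to an unavailable comparison sequence plus exactly the claims that need proof.
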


This result is proven by building an obstruction theory.  First, we recall various key definitions from algebraic geometry, including that of formal schemes, Lubin-Tate cohomology of formal groups, multiextensions, and (higher) cubical structures.  Then, we compute the tangent spaces $T_1 H^*(F; G)$ and $T_1 C^*(F; G)$ for our formal groups $F = \G_a$ and $G = \G_m$; this first calculation is done in the style of Hopkins and the second is the content of our previous paper.  The cohomological calculation is then used as input to a ``tangent spectral sequence'' \[T_1 H^*(F; G) \Rightarrow H^*(F; G),\] where we produce a family of nonvanishing differentials on certain key classes.  Finally, we recall some geometry related to Weil forms, which is certainly known to experts, but does not appear to be available in the literature; the important result for us is the existence of a certain $(k-1)$-variate cocycle $e$, which we call the half-Weil pairing, associated to any $k$-variate cocycle $u \in C^k(F; G)$.  Together they satisfy the two relations
\begin{align*}
\delta_1 e & = u, & e = \prod_{i=1}^{p-1} u(ix_1, x_1, \ldots, x_{n-1}).
\end{align*}
When applied to cocycles $u \in \spec \F_2 \times C^k(\G_a; \G_m)$, the cohomology class $[e_+]$ of the image of $e$ in the tangent space $T_1 C^k(\G_a; \G_m)$ belongs to the sources of our family of differentials in the tangent spectral sequence, and hence is obstructed from becoming $e$ (which in turn are obstructed from satisfying $u = \delta_1 e$) unless certain conditions are met --- namely that the leading coefficient square to zero.  We are then able to read off which classes in $T_1 C^k(\G_a; \G_m)$ extend unobstructed and which become obstructed, which culminates in the description of $\sheaf{O}(\spec \F_2 \times C^k(\G_a; \G_m))$.  This then immediately yields a description of $\sheaf{O}(\spec \Z_{(2)} \times C^k(\G_a; \G_m))$ as a consequence of the previous calculation of $\spec \Z_{(2)} \times C^k(\G_a; \G_a)$.  These techniques also produces partial data at odd primes, a phenomenon we discuss in closing.

\subsection{Acknowledgements}

This project was funded in part by NSF grant DMS-0705233.  Though $H^*(\G_a; \G_a)$ has been known for some time, the style of computation presented in \prettyref{thm:HGaGa} is difficult to find in the literature outside of some unpublished course notes.  We have heard it attributed to Mike Hopkins, who we too credit with thanks.  A number of the ideas in this paper were recycled from an early preprint of the Ando-Hopkins-Strickland paper; they all deserve special thanks.  Finally, Matt Ando deserves even \emph{more} special thanks for suggesting and supervising the project and for his infinite patience with all our questions.

\section{Formal groups}

\begin{defn}
Fix a commutative ring $R$ with unit, and consider the category $\CatOf{Algebras}_{/R}$ of augmented $R$-algebras, complete and separated in the adic topology induced by powers of their augmentation ideal, with continuous, unity-preserving algebra homomorphisms.  The category of ``formal schemes over $\spec R$'' is defined as the following category of presheaves: \[\CatOf{FormalSchemes}_{/\spf R} = \Hom(\CatOf{Algebras}_{/R}, \CatOf{Sets}).\]  The Yoneda embedding $A \mapsto \CatOf{Algebras}_{/R}^{cts}(A, -)$ is denoted $\spf A$.
\end{defn}

\begin{lem}
This category has various nice properties:
\begin{enumerate}
\item This category is cocomplete.
\item This category has an internal hom-object; define \[\underline{\Hom}(X, Y)(S) = \{(u, f) \mid u: \spf S \to \spf R, f: u^* X \to u^* Y\}.\]
\item This satisfies the exponential relation \[\Hom(X \times_{\spf R} Y, Z) \cong \Hom(X, \underline{\Hom}(Y, Z)).\]
\end{enumerate}
\end{lem}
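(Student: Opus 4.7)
The plan is to leverage the fact that $\CatOf{FormalSchemes}_{/\spf R}$ is defined as a presheaf category $\Hom(\CatOf{Algebras}_{/R}, \CatOf{Sets})$, so that each of the three assertions either follows from formal properties of presheaf categories or reduces to a routine currying argument.

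For (1), cocompleteness is inherited pointwise from $\CatOf{Sets}$. Given a diagram $\{X_\alpha\}$ of formal schemes, the presheaf $S \mapsto \operatorname{colim}_\alpha X_\alpha(S)$ assembles into a formal scheme that serves as the colimit, and the universal property transfers objectwise from $\CatOf{Sets}$.

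For (2), I would first verify that the prescribed formula defines an honest presheaf. Given a morphism $\varphi: S \to S'$ in $\CatOf{Algebras}_{/R}$ and an element $(u, f) \in \underline{\Hom}(X, Y)(S)$, the data transports by precomposing $u$ with $\spf \varphi: \spf S' \to \spf S$ and pulling $f$ back along the same map. Functoriality of pullback in the structure map then yields strict functoriality in $S$.

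For (3), I would carry out the standard currying isomorphism. A morphism $\Phi: X \times_{\spf R} Y \to Z$ assigns to each $S$ and each pair $(x, y) \in X(S) \times Y(S)$ an element of $Z(S)$, naturally in $S$. Fixing $x \in X(S)$ and letting the $Y$-point and test algebra vary over $S$-algebras produces a morphism $u^* Y \to u^* Z$ of formal schemes over $\spf S$, i.e., an element of $\underline{\Hom}(Y, Z)(S)$; naturality in $x$ then assembles these into a morphism $X \to \underline{\Hom}(Y, Z)$. The inverse is obtained by uncurrying, and the two constructions are mutually inverse by inspection. The hardest part is not any single verification but the bookkeeping: one must carefully track the interaction between the structure maps $u$ and the pullback operations $u^*$, in particular confirming that pullback is coherent with composition, so that the currying isomorphism is strictly (rather than merely canonically) natural in $S$. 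Once this coherence is discharged, the three assertions reduce to standard facts about presheaf categories.
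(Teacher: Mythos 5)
Your argument is sound, but note that the paper itself does not prove this lemma: its stated proof is only a pointer to Strickland~\cite{Strickland}, so what you have written is an actual verification where the paper gives a reference. The verification goes through exactly because the paper takes $\CatOf{FormalSchemes}_{/\spf R}$ to be the \emph{whole} functor category $\Hom(\CatOf{Algebras}_{/R}, \CatOf{Sets})$: colimits are computed objectwise in $\CatOf{Sets}$, and the exponential law is ordinary cartesian closedness of a category of set-valued functors --- indeed $\spf R$ is terminal here (each $S$ admits a unique continuous $R$-algebra map from $R$), so $\times_{\spf R}$ is the categorical product and the datum $u$ in a pair $(u,f)$ is essentially redundant. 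Two refinements are worth making. First, the pullback-coherence bookkeeping you flag at the end can be sidestepped entirely by taking $\underline{\Hom}(X,Y)(S) = \Hom(\spf S \times_{\spf R} X, Y)$, equivalently modelling $u^*$ as restriction of functors along $\CatOf{Algebras}_{/S} \to \CatOf{Algebras}_{/R}$, which is strictly compatible with composition; then functoriality in $S$ is literal precomposition and the currying bijection holds on the nose, with no cleavage to manage. Second, since $\CatOf{Algebras}_{/R}$ is not small, a word about size is needed: pointwise small colimits are unproblematic, but smallness of $\underline{\Hom}(X,Y)(S)$ is not automatic for arbitrary functors. The paper ignores this, and in the cited source Strickland in fact works with a narrower category of formal schemes (filtered colimits of affine formal schemes), where cocompleteness and especially the existence of mapping objects are genuine theorems requiring hypotheses rather than formal presheaf facts. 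So your route buys a self-contained argument at the price of leaning on the paper's very permissive definition, while the paper's citation buys the statements in the setting where formal schemes are the intended geometric objects.
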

\begin{proof}
A good reference for these facts --- indeed, for this entire section --- is Strickland~\cite{Strickland}.
\end{proof}

\begin{defn}
Formal affine $n$-space is defined to be $\A^n = \spf R \llbracket x_1, \ldots, x_n \rrbracket$.  A formal variety $V$ is a formal scheme noncanonically isomorphic to $\A^n$ for some $n$.  A coordinate on $V$ is a selected such isomorphism $\A^n \isoto V$.  For our purposes, a formal group is a commutative group object in the category of formal varieties which is isomorphic to $\A^1$.
\end{defn}

\begin{defn}
The Lubin-Tate cochains (see Lubin and Tate~\cite{LubinTate}) of a pair of formal groups $(F; G)$ is defined as \[A^n(F; G) = \underline{\Hom}(F^{\times n}, G).\]  There is a structure of cosimplicial object on this coming from the group operation in $F$; hence there are maps $\delta^n: A^n(F; G) \to A^{n+1}(F; G)$ forming a cochain complex upon evaluation on an algebra $S$.  The kernel of $\delta^n$ is the group of Lubin-Tate $n$-cocycles , denoted $Z^n(F; G)$.

There is a collection of formal schemes $H^*(F; G)$ whose action is \[H^n(F; G)(S) = \frac{\ker \delta^n: Z^n(F; G)(S) \to Z^{n+1}(F; G)(S)}{\im \delta^{n-1}: Z^{n-1}(F; G)(S) \to Z^n(F; G)(S)}.\] 
\end{defn}

\begin{defn}
The Lubin-Tate $k$-variate symmetric $2$-cocycle group $C^k(F; G)$ is a subscheme of $A^k(F; G)$ consisting of points $f: F^{\times k} \to G$ with $f(\sigma x) = f(x)$ and \[f(x1, x2, x3, \ldots) -_G f(x_0 +_F x_1, x_2, x_3, \ldots) +_G f(x_0, x_1 +_F x_2, x_3, \ldots) -_G f(x_0, x_1, x_3, \ldots) = 1_G.\]
\end{defn}

\begin{rem}
It is possible, though messy, to phrase construct all of these objects, including $C^k(F; G)$, using coordinate-free techniques.  Since we intend to calculate things, we'll be using coordinates anyway, and these coordinate-ful definitions are not really a disadvantage.
\end{rem}

\begin{defn}
The formal scheme $\spf R[\epsilon] / \epsilon^2$ plays the role of a point equipped with a tangent vector in the language of formal schemes.  The tangent bundle $TX$ of a scheme $X$ is then defined as \[TX = \underline{\Hom}(\spf R[\epsilon] / \epsilon^2, X).\]  Note then that
\begin{align*}
TX(S) & = \Hom(\spf S, \underline{\Hom}(\spf R[\epsilon]/\epsilon^2, X)) \\
& = \Hom(\spf S \times_{\spf R} \spf R[\epsilon] / \epsilon^2, X) \\
& = \Hom(\spf S[\epsilon] / \epsilon^2, X).
\end{align*}
Given an $R$-valued point $x: \spf R \to X$, the tangent space at $x$ is the subscheme of $TX$ restricting to $x$ along the map $\spf R \to \spf R[\epsilon] / \epsilon^2$ induced by $\epsilon \mapsto 0$.  When $X$ is a group scheme, we write $T_1 X$ for the tangent space of $X$ at the identity point.
\end{defn}

\begin{defn}
The most important formal groups in this paper are $\G_a$ and $\G_m$, both isomorphic to $\A^1$ as varieties.  The functor $\G_a$ is described on an $I$-adic $R$-algebra $A$ by $\G_a(R) = I$ with group law \[x +_{\G_a} y = x + y.\]  The functor $\G_m$ is described on $R$ by $\G_m(R) = 1 + I$ with group law given by \[(1+x) +_{\G_m} (1+y) = (1 + x)(1 + y) = 1 + (x + y + xy).\]  The isomorphism $\G_m \cong \A^1$ is given by $1 + x \mapsto x$, and so the group law induced on the formal affine line is described by $x + y + xy$.
\end{defn}

\begin{lem}
$T_1 G \cong \G_a$.
\end{lem}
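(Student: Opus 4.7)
The plan is to choose a coordinate $t$ on $G$ realizing $G \cong \A^1$ as a formal variety, so that $G$ is classified by a formal group law $F(x,y) \in R\llbracket x, y \rrbracket$ with $F(x,0) = x$, $F(0,y) = y$, and identity point corresponding to $t = 0$. Under this coordinate, for any augmented $R$-algebra $A$ with augmentation ideal $I_A$, we have $G(A) = I_A$ as a set, with group operation $a_1 +_G a_2 = F(a_1, a_2)$.

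Next I would unfold the definition of the tangent space at the identity. For a test algebra $S$ with augmentation ideal $I_S$, the algebra $S[\epsilon]/\epsilon^2$ has augmentation ideal $I_S \oplus S\epsilon$, so
\[ G(S[\epsilon]/\epsilon^2) = I_S \oplus S\epsilon. \]
An element of $T_1 G(S)$ is an $S[\epsilon]/\epsilon^2$-point of $G$ that pulls back to the identity along $\epsilon \mapsto 0$, i.e., one whose $\epsilon = 0$ component vanishes. Hence $T_1 G(S) = \{ s\epsilon \mid s \in S \}$, which is naturally in bijection with the underlying set of $\G_a(S) = S$ (note that since we are working at the identity, the restriction sends into all of $S$, not merely $I_S$). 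This bijection is visibly natural in $S$.

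It remains to match the group laws. Given two tangent vectors $s_1 \epsilon, s_2 \epsilon \in T_1 G(S)$, the induced group law uses $F$:
\[ s_1 \epsilon +_G s_2 \epsilon = F(s_1\epsilon, s_2\epsilon) = s_1\epsilon + s_2\epsilon + \sum_{i,j \geq 1} c_{ij}\, s_1^i s_2^j\, \epsilon^{i+j}. \]
Every term in the sum carries $\epsilon^{i+j}$ with $i + j \geq 2$ and therefore vanishes in $S[\epsilon]/\epsilon^2$, leaving $(s_1 + s_2)\epsilon$. Thus the group law on $T_1 G$ is exactly addition on $S$, matching $\G_a$.

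There is no real obstacle here beyond bookkeeping: the only subtlety is identifying the augmentation ideal of $S[\epsilon]/\epsilon^2$ correctly and observing that the group law of any formal group reduces to addition modulo $\epsilon^2$, which is the standard reason why tangent spaces of formal groups are always additive.
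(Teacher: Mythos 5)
Your proof is correct and follows the same route as the paper: identify tangent vectors at the identity with elements $s\epsilon$, then observe that any formal group law is $x + y$ modulo terms of order $\ge 2$, which die against $\epsilon^2 = 0$, so the induced group law is addition. Yours is simply a more detailed write-up of the paper's two-line argument (including the same harmless looseness, shared with the paper, about $S$ versus the augmentation ideal in identifying the points of $\G_a$).
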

\begin{proof}
Both $T_1 G$ and $\G_a$ are isomorphic to $\A^1$; the points $\epsilon a \in T_1 \G_m(R)$ are in bijective correspondence with the points $a \in \G_a(R)$.  Moreover, this map respects the group laws, since every formal group law is of the form $x +_G y = x + y + o(2)$.
\end{proof}

\section{Torsors and the moduli of extensions}

For ordinary groups $F$ and $G$, the cohomology groups $H^*(F; G)$ classify certain kinds of extensions of $G$ by $F$.  We reinterpret these ideas in the language of formal groups.  A good reference for the ideas in the first part of this section is Demazure and Gabriel~\cite{DemazureGabriel}.

\begin{defn}\label{def:Torsors}
Fix group schemes $G$ and $H$ and base $S$-schemes $X$ and $Y$.  Then a $G$-torsor $\torsor{L}$ over the scheme $X$ is an $X$-scheme with a $G$-action which is noncanonically $G$-equivariantly isomorphic to $G \times_S X$ as $X$-schemes.  That is, the $G$-action map fits into a map $G \times \torsor{L} \to \torsor{L} \times \torsor{L}$ described by $(g, x) \mapsto (gx, x)$, and this is an isomorphism of $S$-schemes.  If $\torsor{L}$ is as above and $\torsor{M}$ is an $H$-torsor over $Y$, then a map $\torsor{L} \to \torsor{M}$ of torsors is defined to be a pair of functions $G \to H$ and $X \to Y$ which commute with all the data present.
\end{defn}

\begin{rem}\label{rem:CohomologyClassifies}
The Lubin-Tate cohomology groups $H^*(F; G)$ classify group extensions of $F$ by $G$, i.e., $G$-torsors over $F$ with a chosen basepoint.  This is standard homological algebra hidden beneath the functorial veneer; a sequence of schemes $0 \to G \to Y \to F \to 0$ is said to be an extension of $F$ by $G$ when the evaluation at $A$ gives an extension of groups $F(A)$ by $G(A)$: \[0 \to G(A) \to Y(A) \to F(A) \to 0.\]  Since $F$ and $G$ are one-dimensional formal varieties, evaluation on $A = k \llbracket x, y \rrbracket$ gives $F(A) \cong k \llbracket x, y \rrbracket$ and $G(A) \cong k \llbracket x, y \rrbracket$, which in turn requires that $Y(A)$ be isomorphic to $k \llbracket x, y \rrbracket \times k \llbracket x, y \rrbracket$.  Adding the elements $(0, x)$ and $(0, y)$ in $Y(A)$ gives \[(0, x) +_{Y(A)} (0, y) = (u(x, y), x + y),\] with $u(x, y)$ some power series element of $G(A)$ satisfying the symmetry and $2$-cocycle conditions.  Viewing $u(x, y)$ as a map $k \llbracket z \rrbracket \to k \llbracket x, y \rrbracket$ and hence as a map $\A^2 \to \A^1$, we use our coordinates on $F$ and $G$ to produce a map $F^2 \to G$, and hence an element of $Z^2(F; G)$.  Using standard techniques, it can be shown that $u$ is determined up to $1$-coboundaries through change of coordinates, and hence $H^2(F; G) \cong \Ext^1(F; G)$.  Similar statements can be made for higher $\Ext$s.
\end{rem}

\begin{defn}\label{def:TorsorOps}
Several common constructions for bundles translate to torsors.
\begin{itemize}
\item \emph{Pullback:} Let $\torsor L$ be a $G$-torsor over $Y$, and let $f: X \to Y$ be a map of schemes.  Then we define the pullback $f^* \torsor L$ to be the fiber product $\torsor L \times_X Y$, which is easily seen to be a $G$-torsor over $Y$.
\item \emph{Pushforward:} Let $\torsor L$ be as above, and let $\varphi: G \to H$ be a map of group schemes.  Then we define the pushforward torsor $\varphi_* \torsor L$ as the colimit of the diagram
\begin{center}
\begin{tikzpicture}[
        normal line/.style={-stealth},
    ]
    \matrix (m) [matrix of math nodes, 
         row sep=2em, column sep=3em,
         text height=1.5ex, 
         text depth=0.25ex]{
\torsor L \times G \times H & \torsor L \times H \\
\torsor L \times H \times H & \torsor L \times H, \\
    };
    \path[normal line]
        (m-1-1) edge node[above]{$\cdot \times \id$} (m-1-2)
                edge node[left]{$\id \times \varphi \times \id$} (m-2-1)
        (m-1-2) edge[style={double,-}] (m-2-2)
        (m-2-1) edge node[below]{\hspace{0.8em}$\id \times \cdot$} (m-2-2);
\end{tikzpicture}
\end{center}
corresponding to the Borel construction. 
\item \emph{Product:} Let $\torsor L$ be a $G$-torsor over $X$ and $\torsor M$ an $H$-torsor over $Y$.  Then there exists a product $G \times H$-torsor $\torsor L \times \torsor M$ over $X \times Y$ given by the scheme-theoretic product and component-wise action.
\item \emph{Dual:} Let $1$ denote the trivial $G$-torsor $G \times X$ over the $S$-scheme $X$.  Then any $G$-torsor $\torsor L$ over $X$ has a dual defined by $\torsor L^{-1} = \underline{\Hom}_G(\torsor L, 1)$.
\end{itemize}
\end{defn}

\begin{rem}
These constructions can be combined to give several others, including the tensor product of torsors.  If $\torsor L$ and $\torsor M$ are two $G$-torsors over $X$, then $\torsor L \otimes \torsor M = \Delta^* \mu_* (\torsor L \times \torsor M)$, where $\Delta: X \to X \times X$ is the diagonal map and $\mu: G \times G \to G$ is multiplication.
\end{rem}

\begin{rem}
Given two $G$-torsors $\torsor{L}$ and $\torsor{M}$ over the $S$-schemes $X$ and $Y$ respectively, a $\star$-map $\torsor{L} \to \torsor{M}$ is a pair $(f, t)$ of a map $f: X \to Y$ and a $G$-equivariant isomorphism of $S$-schemes $t: \torsor{L} \to f^* \torsor{M}$.  This produces a category of $G$-torsors over $S$-schemes.  This is the category and notion of ``map of torsors'' usually taken; our map of torsors defined in \prettyref{def:Torsors} is strictly weaker.
\end{rem}

\begin{defn}\label{def:Multiextensions}
Fix a structure group $G$.  We make a sequence of definitions leading up to that of a higher cubical structure:
\begin{itemize}
\item Select a family of groups $H_1, \ldots, H_n$.  A multiextension $\torsor L$ is a $G$-torsor over $H_1 \times \cdots \times H_n$ so that for any point $h_{\hat \imath} = (h_1, \ldots, \hat h_i, \ldots, h_n) \in H_1 \times \cdots \times \hat H_i \times \cdots \times H_n$ the corresponding pullback $f^* \torsor L$ along $f(h_i) = (h_1, \ldots, h_i, \ldots, h_n)$ gives an extension of group schemes of $H_i$ by $G$.  These extensions are controlled by a family of $2$-cocycles parametrized by the missing index $i$ and the points $h_{\hat \imath}$: \[u_i(h_{\hat \imath}) : H_i \times H_i \to G.\]
\item In the case $H_1 = \cdots = H_n$, we can impose various symmetry conditions on such a multiextension.  To begin, we have a family of morphisms $\sigma: H^n \to H^n$ corresponding to permutations $\sigma \in \Sigma_n$, and to each permutation $\sigma$ we can construct a map $\Delta_\sigma: H^{|\mathbf{n} / \<\sigma\>|} \to H^n$ that populates the $\sigma$-orbits of $H^n$ with diagonal values.  We necessarily have $\Delta_\sigma = \sigma \Delta_\sigma$, and hence $\Delta_\sigma^* \torsor L$ is canonically isomorphic to $(\sigma \Delta_\sigma)^* \torsor L$.  The most basic condition asserts that we fix a family of isomorphisms $\tau_\sigma$ of isomorphisms $\tau_\sigma: \sigma^* \torsor L \to \torsor L$ extending these given isomorphisms satisfying the coherence relations $\tau_{\sigma' \sigma} = (\sigma^* \tau_{\sigma'}) \tau_{\sigma}$.  A multiextension together with this symmetry data is called a symmetric multiextension.
\item The most extreme symmetry we can request is a torsor trivialization of $\torsor L$ such that the controlling cocycles satisfy \[u_i(h_1, \ldots, \hat h_i, \ldots, h_n)(h_i, h_{n+1}) = u_{\sigma i}(h_{\sigma 1}, \ldots, \hat h_{\sigma i}, \ldots, h_{\sigma n})(h_{\sigma i}, h_{\sigma(n+1)})\] for all choices of $\sigma \in \Sigma_{n+1}$.  Under these conditions, we can simply write $u(h_1, \ldots, h_{n+1})$ without ambiguity, since all interpretations of this symbol produce the same point in $G$.  A multiextension satisfying this condition is called a higher cubical structure.  The name ``cubical structure'' stems from previous work of Mumford~\cite{Mumford} and Breen~\cite{Breen} on the case $n = 2$. 
\end{itemize}
\end{defn}

\begin{figure}[htp]
\begin{tikzpicture}[x  = {(-0.707cm,-0.707cm)},
                    y  = {(0.9659cm,-0.25882cm)},
                    z  = {(0cm,1cm)},
                    scale = 2,
                    color = {lightgray}]
\tikzset{facestyle/.style={fill=black!25,draw=black,very thin,line join=round,opacity=.4},slicestyle/.style={fill=black!25,draw=black,very thin,line join=round,opacity=.8}}
\begin{scope}[canvas is zy plane at x=0]
  \path[facestyle,shade] (0,0) rectangle (2,2);
\end{scope}
\begin{scope}[canvas is zx plane at y=0]
  \path[facestyle,shade] (0,0) rectangle (2,2);
\end{scope}
\begin{scope}[canvas is yx plane at z=0]
  \path[facestyle,color=lightgray] (0,0) rectangle (2,2);
\end{scope}
\begin{scope}[canvas is zy plane at x=1.5]
  \path[slicestyle,color=black!50] (0,0) rectangle (2,0.5);
\end{scope}
\begin{scope}[canvas is zx plane at y=0.5]
  \path[slicestyle,color=black] (0,0) rectangle (2,1.5);
\end{scope}
\begin{scope}[canvas is zy plane at x=1.5]
  \path[slicestyle,color=black!50] (0,0.5) rectangle (2,2);
\end{scope}
\begin{scope}[canvas is zx plane at y=0.5]
  \path[slicestyle,color=black] (0,1.5) rectangle (2,2);
\end{scope}
\begin{scope}[canvas is zy plane at x=2]
  \path[facestyle] (0,0) rectangle (2,2);
\end{scope}
\begin{scope}[canvas is zx plane at y=2]
  \path[facestyle] (0,0) rectangle (2,2);
\end{scope}
\begin{scope}[canvas is yx plane at z=2]
  \path[facestyle] (0,0) rectangle (2,2);
\end{scope}
\draw[very thin,black,line join=round]
     (2,0,0) -- node [below,black] {$H_1$} (2,2,0);
\draw[very thin,black,line join=round]
     (2,2,0) -- node [right,black] {$H_2$} (0,2,0);
\draw[very thin,black,line join=round]
     (0,2,0) -- node [right,black] {$G$} (0,2,2);
\draw[very thin,black,line join=round]
     (2,0.5,0) -- node [below,black] {$h_1$} (2,0.5,0);
\draw[very thin,black,line join=round]
     (1.5,2,0) -- node [right,black] {$h_2$} (1.5,2,0);
\end{tikzpicture}
\hspace{2em}
\begin{tikzpicture}
    \draw[very thin,black,line join=round,fill=black!30]
        (0,1) -- node [below,black] {$H_1 \times \{h_2\}$} (3,1) -- node[right,black] {$G$} (3,4) -- (0,4) -- (0,1);
    \draw[very thin,black,line join=round,fill=black!70]
        (0,5) -- node [below,black] {$\{h_1\} \times H_2$} (3,5) -- node[right,black] {$G$} (3,8) -- (0,8) -- (0,5);
    \node[anchor=east] at (-1,4) {\Huge $\Rightarrow$};
\end{tikzpicture}
\caption{Extensions contained in a biextension.}
\end{figure}

\begin{defn}
We define a sequence of functorial constructions of multiextensions.  Denote the map $(h_1, \ldots, h_n) \mapsto \sum_{i \in I} h_i$ by $\mu_I$, and select an extension $\torsor L$ of $H$ by $G$.  We define $\Theta^k \torsor L$ by
\begin{align*}
\Theta^k \torsor L &= \bigotimes_{I \subseteq \{1, \ldots, k\}} (\mu_I^* \torsor L)^{(-1)^{|I|}}, & (\Theta^k \torsor L)_{\mathbf{x}} & = \bigotimes_{I \subseteq \{1, \ldots, k\}} \torsor L_{\sum_{i \in I} \mathbf{x}_i}^{(-1)^{|I|}}.
\end{align*}
A $\Theta^k$-structure on a extension $\torsor L$ is a chosen trivialization of $\Theta^k \torsor L$.
\end{defn}

\begin{rem}
A $\Theta^{k+1}$-structure on $\torsor L$ corresponds to a higher cubical structure on $\Theta^k \torsor L$.  Both of these structures are classified by the Lubin-Tate cocycle groups $C^k(F; G)$.  Pick $k = 2$ for simplicity, and suppose we have a $\Theta^3$-structure on $\torsor L$, i.e., a selected isomorphism $\Theta^3 \torsor L \xrightarrow{\cong} 1$.  Then, we produce a map on fibers as follows:
\begin{align*}
1_{x, y, z} & \xrightarrow{\cong} \frac{\torsor L_{x + y + z} \otimes \torsor L_x \otimes \torsor L_y \otimes \torsor L_z}{\torsor L_{x + y} \otimes \torsor L_{x + z} \otimes \torsor L_{y+z}}, \\
\frac{\torsor L_{x + z} \otimes \torsor L_{y + z}}{\torsor L_x \otimes \torsor L_y \otimes \torsor L_z} & \xrightarrow{\cong} \frac{\torsor L_{x + y + z}}{\torsor L_{x + y}}, \\
\frac{\torsor L_{x + z}}{\torsor L_x \torsor L_z} \otimes \frac{\torsor L_{y + z}}{\torsor L_y \torsor L_z} & \xrightarrow{\cong} \frac{\torsor L_{x + y + z}}{\torsor L_{x + y} \otimes \torsor L_z}, \\
(\Theta^2 \torsor L)_{x, z} \otimes (\Theta^2 \torsor L)_{y, z} & \xrightarrow{\cong} (\Theta^2 \torsor L)_{x+y, z},
\end{align*}
part of the biextension structure.  The symmetry of the tensor product used in the definition of $\Theta^{k+1} \torsor L$ ensures that the induced multiextension structure on $\Theta^k \torsor L$ is a higher cubical structure.  Higher cubical structures are classified by their controlling cocycle $u$, i.e., a point of $C^k(F; G)$.  This observation is recounted in great, careful detail in both Breen~\cite{Breen} and Mumford~\cite{Mumford}.
\end{rem}

Finally, we make some remarks on how multiextensions interact with the torsor operations defined in \prettyref{def:TorsorOps}.

\begin{lem}
Let $\torsor B$ be a higher cubical structure with structure group an $S$-scheme $G$ over a group $S$-scheme $Y$, and select a map $f: X \to Y$ of group $S$-schemes.  Then the pullback $f^* \torsor B$ receives the structure of a symmetric multiextension so that the induced map $f^* \torsor B \to \torsor B$ is a map of multiextensions.  If $u$ is the controlling cocycle for $\torsor B$, then the controlling cocycles for $f^* \torsor B$ are $u \circ f$.
\end{lem}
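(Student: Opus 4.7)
My plan is to exploit the fact that essentially every piece of structure on a higher cubical structure is phrased in terms of isomorphisms between pullbacks of $\torsor B$ along specified maps, and that pullback is functorial and commutes with all the relevant map compositions. So the proof should amount to naturality bookkeeping, with no obstacle beyond being careful about which squares commute.

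First I would write down the pullback $f^* \torsor B$ as the $G$-torsor $(f^{\times n})^* \torsor B$ on $X^n$, using that a group map $f: X \to Y$ induces coordinatewise $f^{\times n} : X^n \to Y^n$. To verify the multiextension property, fix an index $i$ and a point $x_{\hat\imath} = (x_1, \ldots, \hat x_i, \ldots, x_n) \in X^{n-1}$, and let $\iota_{x_{\hat\imath}} : X \to X^n$ be the corresponding slice inclusion. Then $f^{\times n} \circ \iota_{x_{\hat\imath}} = \iota_{f(x_{\hat\imath})} \circ f$, so $\iota_{x_{\hat\imath}}^* f^* \torsor B = f^* \iota_{f(x_{\hat\imath})}^* \torsor B$. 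By hypothesis $\iota_{f(x_{\hat\imath})}^* \torsor B$ is an extension of $Y$ by $G$, and since $f$ is a group homomorphism the pullback of an extension along $f$ is an extension of $X$ by $G$; this establishes that $f^* \torsor B$ is a multiextension.

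Next I would transport the symmetry data. For each $\sigma \in \Sigma_n$ we have an isomorphism $\tau_\sigma : \sigma^* \torsor B \isoto \torsor B$ on $Y^n$; since $\sigma \circ f^{\times n} = f^{\times n} \circ \sigma$, pulling $\tau_\sigma$ back along $f^{\times n}$ produces $f^* \tau_\sigma : \sigma^* f^* \torsor B \isoto f^* \torsor B$, and the coherence $\tau_{\sigma'\sigma} = (\sigma^* \tau_{\sigma'}) \tau_\sigma$ pulls back verbatim. The diagonal trivializations $\Delta_\sigma^* \torsor B$ pull back in the same way because $\Delta_\sigma$ is expressed using the group operations, which $f$ respects. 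Finally, the higher cubical condition on the controlling cocycle $u$ is a family of equalities of morphisms $Y^{n+1} \to G$ indexed by $\Sigma_{n+1}$; pre-composing each of these equalities with $f^{\times (n+1)}$ yields the corresponding equalities for $u \circ f^{\times (n+1)}$, so the cubical condition is preserved.

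For the last clause, I would recover the controlling cocycle of $f^* \torsor B$ by tracing the construction from \prettyref{rem:CohomologyClassifies} slice by slice: on each fiber $X \times X$ over $x_{\hat\imath}$, the addition of the two chosen trivializations in $f^* \torsor B$ is, by pullback-compatibility of the trivialization maps, the pullback along $f$ of the analogous addition in $\torsor B$ over $f(x_{\hat\imath})$. This yields controlling cocycle $u \circ f^{\times(n+1)}$, which I write $u \circ f$ as in the statement, and exhibits the canonical map $f^* \torsor B \to \torsor B$ as a map of multiextensions. The only place where one has to think at all is confirming that the pulled-back family of symmetry isomorphisms satisfies the full coherence package rather than just the individual transposition relations, and this follows from the fact that the functor $(f^{\times n})^*$ is a strict monoidal functor from $G$-torsors on $Y^n$ to $G$-torsors on $X^n$.
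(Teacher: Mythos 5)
Your proof is correct. The paper actually states this lemma without any proof, treating it as routine, and your argument supplies exactly the naturality bookkeeping the authors leave implicit: identify $f^*\torsor B$ with $(f^{\times n})^*\torsor B$, use $f^{\times n}\circ\iota_{x_{\hat\imath}}=\iota_{f(x_{\hat\imath})}\circ f$ to exhibit each slice of $f^*\torsor B$ as the pullback of a slice extension of $\torsor B$ along the homomorphism $f$, and transport the $\tau_\sigma$ and the $\Sigma_{n+1}$-symmetry of $u$ along $\sigma\circ f^{\times n}=f^{\times n}\circ\sigma$; this yields controlling cocycles $u\circ f^{\times(n+1)}$ and in fact shows that $f^*\torsor B$ is again a higher cubical structure, which is harmlessly stronger than the stated conclusion. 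One small correction: the commutation $f^{\times n}\circ\Delta_\sigma=\Delta_\sigma\circ f^{\times|\mathbf{n}/\langle\sigma\rangle|}$ holds simply because $\Delta_\sigma$ duplicates coordinates, not because ``$\Delta_\sigma$ is expressed using the group operations''; the homomorphism hypothesis on $f$ is instead what makes the pulled-back slices honest group extensions and makes $u\circ f$ satisfy the symmetric $2$-cocycle identity.
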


\begin{lem}
Let $\torsor B$ be a higher cubical structure with structure group an $S$-scheme $G$ over an $S$-scheme $X$, and select a map $\varphi: G \to H$ of group $S$-schemes.  Then the pushforward $\varphi_* \torsor B$ receives the structure of a higher cubical structure so that $\torsor B \to \varphi_* \torsor B$ is a map of higher cubical structures.   If $u$ is the controlling cocycle for $\torsor B$, then the controlling cocycle for $\varphi_* \torsor B$ is described by $\varphi \circ u$.
\end{lem}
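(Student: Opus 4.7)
The plan is to exploit the explicit Borel-construction description of pushforward and verify the structure axiom-by-axis. Concretely, $\varphi_* \torsor{B}$ is constructed as a quotient $(\torsor{B} \times H)/G$ with $G$ acting on $\torsor{B}$ via the given action and on $H$ by translation through $\varphi$; its $H$-action is translation in the second coordinate. I would first establish the ``base-change'' identity $\varphi_* \circ f^* \cong f^* \circ \varphi_*$ for any map $f: T \to X$ of $S$-schemes: this is a direct check since both sides are computed as the same quotient of $\torsor{B} \times_X T \times H$. Once this is in hand, the multiextension condition reduces to the case of ordinary extensions.

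For the multiextension condition, fix an axis $i$ and a point $h_{\hat\imath} \in \prod_{j \ne i} H_j$, and let $f: H_i \to H_1 \times \cdots \times H_n$ be the corresponding inclusion. By base-change, $f^* \varphi_* \torsor{B} \cong \varphi_* f^* \torsor{B}$, so it suffices to observe that the pushforward of an extension of group schemes $0 \to G \to f^* \torsor{B} \to H_i \to 0$ along a group map $\varphi: G \to H$ is again an extension. This is standard and is visible directly at the level of points: representatives in $\varphi_* f^* \torsor{B}(A)$ satisfy $(0, x_1) +_G (0, x_2) = (u_i(h_{\hat\imath})(x_1, x_2), x_1 +_{H_i} x_2)$ in $f^* \torsor{B}(A)$, and projecting into the quotient and moving the first factor through $\varphi$ via the defining equivalence of the Borel construction gives the new cocycle $\varphi \circ u_i(h_{\hat\imath})$. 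So $\varphi_* \torsor{B}$ is a multiextension with controlling cocycles obtained by postcomposing each $u_i$ with $\varphi$.

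Next, I would promote this to the higher-cubical symmetry structure. The symmetry isomorphisms $\tau_\sigma: \sigma^* \torsor{B} \isoto \torsor{B}$ are, by construction, $G$-equivariant maps of $S$-schemes over the permutation-fixed base. Applying the functor $\varphi_*$ (and appealing once more to base-change for $\sigma$) yields canonical $H$-equivariant isomorphisms $\sigma^* \varphi_* \torsor{B} \isoto \varphi_* \torsor{B}$ whose coherence $\tau_{\sigma'\sigma} = (\sigma^*\tau_{\sigma'})\tau_\sigma$ is preserved because pushforward is a functor. The higher cubical condition, which reads at the level of cocycles as the assertion that all indexed evaluations $u_i(h_{\hat\imath})(h_i, h_{n+1})$ agree as points of $G$ for varying $i$ and $\sigma$, is manifestly preserved after applying $\varphi$ pointwise; this also identifies the single controlling cocycle of $\varphi_* \torsor{B}$ as $\varphi \circ u$. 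Finally, the tautological map $\torsor{B} \to \varphi_* \torsor{B}$ sending $l \mapsto [l, 1]$ is a $\star$-map of torsors over $\mathrm{id}$, equivariant along $\varphi$, and intertwines the trivializations and symmetry isomorphisms by construction, so it is a map of higher cubical structures.

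The only potentially fussy step is the base-change isomorphism $\varphi_* f^* \cong f^* \varphi_*$, which is needed both to see the axis-wise extension structure and to propagate the symmetry data; everything else is bookkeeping. I expect this to follow from writing out both colimits explicitly and noting that pullback commutes with the diagonal action used in the Borel construction, but it is the one place where one must be careful that the colimit defining $\varphi_*$ is formed in the category of formal schemes and survives base change, which is true since $\varphi$ is a map of group schemes and the action involved is free.
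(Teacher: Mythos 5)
The paper states this lemma (and its companion about pullback) without proof, so there is no argument in the source to compare against; what follows is an assessment of your proposal on its own merits. Your strategy is sound and gives a complete argument: describing $\varphi_* \torsor{B}$ as the quotient $(\torsor{B} \times H)/G$ from the Borel-construction colimit, proving the base-change identity $\varphi_* f^* \cong f^* \varphi_*$, reducing to the case of ordinary extensions along the axis restrictions, computing the controlling cocycle at the level of points, and then propagating the symmetry data by functoriality of $\varphi_*$ is exactly the right chain of reductions. You are also right to flag $\varphi_* f^* \cong f^* \varphi_*$ as the fussy step and to note that it goes through because the $G$-action is free; one can phrase the justification by noting that $\torsor{B} \times H \to \varphi_*\torsor{B}$ is itself a $G$-torsor, and torsor quotients are stable under base change.

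Two small points of imprecision worth tightening. First, in the line ``$(0, x_1) +_G (0, x_2) = (u_i(h_{\hat\imath})(x_1, x_2), x_1 +_{H_i} x_2)$'' the operation should be the group law of the extension $f^*\torsor{B}$, not $+_G$; as written it suggests an operation on $G$ alone. Second, the identification of the pushforward's cocycle is cleaner if you note explicitly that the scheme-theoretic trivialization $f^*\torsor{B} \cong G \times H_i$ induces, after quotienting, a trivialization $\varphi_* f^*\torsor{B} \cong H \times H_i$ via $[(g,x),h] \mapsto (\varphi(g)h, x)$, and it is transport of the group law under \emph{this} coordinate that produces $\varphi \circ u_i$. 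Finally, a conceptual shortcut worth recording: the full $\Sigma_{n+1}$-symmetry defining a higher cubical structure is a pointwise condition on the single controlling cocycle $u: X^{\times(n+1)} \to G$, and since $\varphi$ is applied after evaluating $u$, the condition is manifestly inherited by $\varphi \circ u$. This makes the contrast with the pullback lemma transparent, where one only gets a symmetric multiextension because precomposition with $f$ does not respect the role of the $(n+1)$st slot.
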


\section{Calculations tangent to the Lubin-Tate cohomology of $(\G_a; \G_m)$.}\label{sec:CalcTangent}

Our ultimate goal is to understand the group scheme $C^k(\G_a; \G_m)$.  As in Lie theory, it is fruitful to first compute the tangent space at the identity as a means of understanding the local picture.

\subsection{Calculation of $T_1 H^*(\G_a; \G_m)$}

Let's begin by computing the tangent space to the cohomology groups.

\begin{lem}\label{lem:T1GCommute}
$T_1 H^*(F; G) = H^*(F; T_1 G)$.
\end{lem}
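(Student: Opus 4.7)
The plan is to compute $T_1$ of the entire Lubin--Tate cochain complex $A^*(F;G)$, identify it level-wise with $A^*(F; T_1 G)$, and then descend to cohomology using the section of the reduction $\spf R[\epsilon]/\epsilon^2 \to \spf R$ induced by $\epsilon \mapsto 0$.

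First I would identify $T_1 A^n(F;G) \cong A^n(F; T_1 G)$ at each level. An $S$-point of the left-hand side is a map $f \colon F^{\times n}_{S[\epsilon]/\epsilon^2} \to G$ whose reduction mod $\epsilon$ is the identity cochain. In a coordinate on $G$, such an $f$ is a power series in $S[\epsilon]/\epsilon^2 \llbracket x_1, \ldots, x_n \rrbracket$ without constant term vanishing mod $\epsilon$, so $f = \epsilon g$ for a unique $g \in S\llbracket x_1, \ldots, x_n \rrbracket$ without constant term. This is precisely an $S$-point of $A^n(F; T_1 G)$ via the identification $T_1 G \cong \G_a$ from the previous lemma.

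Next I would check that $\delta^n$ is intertwined with the identification. The coboundary combines its terms via the group law $+_G$, which expands as $+_G(a,b) = a + b + O(ab)$. On tangent-valued cochains we compute $(\epsilon a) +_G (\epsilon b) = \epsilon(a+b)$ exactly, since every higher-order cross term picks up a factor of $\epsilon^2 = 0$. Iterating over the face maps gives $\delta^n_G(\epsilon g) = \epsilon \cdot \delta^n_{T_1 G}(g)$, so the level-wise isomorphisms assemble into an isomorphism of cochain complexes $I^*(S) \cong A^*(F; T_1 G)(S)$, where $I^*(S)$ denotes the subcomplex of $A^*(F;G)(S[\epsilon]/\epsilon^2)$ of cochains vanishing mod $\epsilon$.

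Finally I would conclude at the level of cohomology. The short exact sequence of cochain complexes
\begin{equation*}
0 \to I^*(S) \to A^*(F;G)(S[\epsilon]/\epsilon^2) \to A^*(F;G)(S) \to 0
\end{equation*}
is split by the inclusion $S \hookrightarrow S[\epsilon]/\epsilon^2$, which is a section of $\epsilon \mapsto 0$ and therefore induces a splitting of cochain complexes. The associated long exact sequence in cohomology collapses into split short exact sequences
\begin{equation*}
0 \to H^n(I^*(S)) \to H^n(F;G)(S[\epsilon]/\epsilon^2) \to H^n(F;G)(S) \to 0,
\end{equation*}
whose left-hand kernel is by definition $T_1 H^n(F;G)(S)$. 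Combined with $H^n(I^*(S)) = H^n(F; T_1 G)(S)$ from Step 2, naturality in $S$ yields the claimed isomorphism of formal schemes. The main obstacle is the second step, namely verifying that $\delta$ is compatible with the tangent identification; the essential algebraic input is that $\epsilon^2 = 0$ linearizes $+_G$ on cochains landing in the tangent direction, collapsing it to ordinary addition in $T_1 G \cong \G_a$, as without this simplification the obvious identification would agree with $\delta$ only up to higher-order corrections.
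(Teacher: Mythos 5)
Your proof is correct, and its computational core is the same as the paper's: since $\epsilon^2 = 0$, the group law of $G$ linearizes on cochains of the form $\epsilon g$, so the multiplicative coboundary collapses to the additive one and tangent cochains for $G$ are exactly cochains valued in $T_1 G \cong \G_a$. The difference is in the bookkeeping around passing to cohomology. The paper works directly with the cocycle schemes $Z^*(F;G)$ and simply writes $T_1 H^n(F;G)(S)$ as $\ker T_1\delta^n / \im T_1\delta^{n-1}$, i.e.\ it commutes $T_1$ past the kernel-modulo-image construction without further comment; you instead identify $T_1 A^*(F;G)$ with $A^*(F;T_1G)$ level-wise as complexes and then use the short exact sequence of cochain complexes
\begin{equation*}
0 \to I^*(S) \to A^*(F;G)(S[\epsilon]/\epsilon^2) \to A^*(F;G)(S) \to 0,
\end{equation*}
split by $S \hookrightarrow S[\epsilon]/\epsilon^2$, to deduce that $H^n(I^*(S))$ is precisely the kernel of the reduction on cohomology, which is $T_1 H^n(F;G)(S)$ by the paper's definition of the tangent space. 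This splitting argument is exactly the justification the paper leaves implicit (commuting $T_1$ with a quotient is not automatic in general), so your version is slightly more careful while buying the same conclusion; the paper's version is shorter because it works only with cocycles and takes the interchange for granted. One cosmetic caveat: whether tangent cochains have vanishing constant term is a matter of convention about $A^n(F;G)$ (the paper allows any power series $\epsilon u_+$), but this does not affect the argument.
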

\begin{proof}
We expand the definition of $H^*(F; G)$ to make the following calculation: \[T_1 H^n(F; G)(S) = \frac{\ker T_1 \delta^n: T_1 Z^n(F; G)(S) \to T_1 Z^{n+1}(F; G)(S)}{\im T_1 \delta^{n-1} : T_1 Z^{n-1}(F; G)(S) \to T_1 Z^n(F; G)(S)}.\]  Hence, we reduce to understanding $T_1 Z^{n+1}(F; G)$.

The point of $Z^n(F; G)$ corresponding to the identity element is represented by the power series $0$, sending $F^k$ to the identity point of $G$.  A point of $T_1 Z^n(F; G)$ is then a power series $u$ of the form $0 + \epsilon u_+$ for some power series $u_+$.  Since $\epsilon^2 = 0$, we compute the $G$-inverse of $\epsilon u_+$ to be $-\epsilon u_+$, and hence the $2$-cocycle condition on $u$ corresponds to the following condition on $u_+$: \[u_+(x_1, x_2, x_3, \ldots) - u_+(x_0 +_F x_1, x_2, x_3, \ldots) + u_+(x_0, x_1 +_F x_2, x_3, \ldots) - u_+(x_0, x_1, x_3, \ldots) = 0.\]  These $u_+$ are exactly the elements of $Z^n(F; T_1 G)$.  We also have inclusion in the other direction; a point $u_+ \in Z^n(F; T_1 G)$ corresponds to a point $0 + \epsilon u_+ \in T_1 Z^n(F; G)$.
\end{proof}
\begin{cor}
The tangent space $T_1 H^*(\G_a; \G_m)$ is $H^*(\G_a; \G_a)$.
\end{cor}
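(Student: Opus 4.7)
The corollary is an immediate formal consequence of two results already at our disposal: \prettyref{lem:T1GCommute}, which asserts that passing to tangent spaces at the identity commutes with the Lubin-Tate cohomology functor in the second slot, i.e.\ $T_1 H^*(F; G) = H^*(F; T_1 G)$, and the earlier lemma identifying the tangent space at the identity of any one-dimensional formal group $G$ with $\G_a$.

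The plan, therefore, is simply to specialize $F = \G_a$ and $G = \G_m$ in \prettyref{lem:T1GCommute}, obtaining
\[T_1 H^*(\G_a; \G_m) = H^*(\G_a; T_1 \G_m),\]
and then to substitute the identification $T_1 \G_m \cong \G_a$. One should note that this latter isomorphism is not merely an isomorphism of formal schemes but of formal groups, which is what is needed in order for the right-hand side to make sense as a Lubin-Tate cohomology object; this is exactly what the earlier lemma establishes, by observing that in any formal group law $x +_G y = x + y + o(2)$, so that on first-order tangent vectors $\epsilon a +_G \epsilon b = \epsilon(a + b)$.

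There is no hard step here; the content of the corollary is entirely contained in the two inputs. The only thing worth checking is that the functoriality of $H^*(F; -)$ in the coefficient group is strong enough to promote the isomorphism $T_1 \G_m \cong \G_a$ of formal groups to an isomorphism on cohomology, but this is automatic from the definition of $H^*(F; G)$ in terms of $\underline{\Hom}(F^{\times n}, G)$ and the cosimplicial differentials induced by the group structure on $G$.
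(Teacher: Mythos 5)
Your proposal is correct and matches the paper's (implicit) argument exactly: the corollary is the immediate specialization of \prettyref{lem:T1GCommute} to $F = \G_a$, $G = \G_m$, combined with the earlier lemma that $T_1 G \cong \G_a$ as formal groups. The paper treats this as an immediate consequence and offers no further proof, so there is nothing to add.
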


\begin{thm}\label{thm:HGaGa}
Let $a_i$ represent $x^{p^i}$ and $b_i$ represent $p^{-1}((x+y)^{p^i} - x^{p^i} - y^{p^i}) =: \fc{p^i}{2}$.  Then,
\begin{align*}
H^*(\G_a; \G_a)(\Q) & \cong \Lambda[b], \\
H^*(\G_a; \G_a)(\F_2) & \cong \bigotimes_i \F_2[a_i], \\
H^*(\G_a; \G_a)(\F_p) & \cong \left(\bigotimes_i \Lambda[a_i]\right) \otimes \left(\bigotimes_i \F_p[b_i]\right).
\end{align*}
\end{thm}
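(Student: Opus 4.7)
The plan is to filter the cochain complex $A^*(\G_a;\G_a)(R) = R[[x_1,\ldots]]$ (normalized to zero constant term) by total polynomial degree. Each variable carries weight one, and since each simplicial coboundary substitutes $x_{i-1}+x_i$ for $x_i$ or drops a variable, $\delta$ preserves total weight; the complex therefore splits as a product over $d \ge 1$ of finite subcomplexes $A^*_d(R)$, concentrated in cochain degrees $1 \le n \le d$, each a finitely generated free $R$-module. By flatness it suffices to compute each $H^*(A^*_d)(\Z)$ and base-change to $\Q$ and $\F_p$.

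For the rational answer, the formal exponential/logarithm change of variables makes $\G_a$ canonically self-isomorphic over $\Q$, and on each $A^*_d(\Q)$ with $d \ge 2$ I would construct an explicit contracting homotopy by dividing by the binomial coefficients $\binom{d}{k}$, invertible in $\Q$. Only $A^*_1(\Q)$ survives, contributing the linear primitive $x$, yielding $\Lambda[b]$ with $b$ in cochain degree one.

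Over $\F_p$, each $a_i = x^{p^i}$ is a $1$-cocycle because $\delta a_i = (x+y)^{p^i} - x^{p^i} - y^{p^i} \equiv 0 \pmod{p}$, and each $b_i = \fc{p^i}{2}$ is a $2$-cocycle by construction. I would check how these interact at the cochain level: for $p = 2$, Kummer's theorem singles out $k = 2^i$ as the unique index $0 < k < 2^{i+1}$ with $\nu_2(\binom{2^{i+1}}{k}) = 1$, so the mod-$2$ reduction of $\fc{2^{i+1}}{2}$ is exactly $x^{2^i} y^{2^i}$, identifying $b_{i+1}$ with $a_i \cdot a_i$ on the nose and absorbing the $b_i$ into the polynomial ring on the $a$'s; for odd $p$, graded commutativity forces $a_i^2 = 0$ in cohomology while the $b_i$ remain independent, giving the exterior-times-polynomial shape.

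The main obstacle is the matching upper bound: that the proposed classes span all of $H^*(\G_a;\G_a)(\F_p)$. My strategy is a Künneth-style factorization of each graded piece $A^*_d(\F_p)$ along the base-$p$ digit expansion of $d$, splitting it, up to a manageable filtration, as a tensor product of ``one-Frobenius-level'' subcomplexes indexed by the digits, each a small cobar-like complex contributing either a single $a_i$, a single $b_i$, or nothing. The delicate point is controlling cross-terms in $\delta$ when monomials with different $p$-adic profiles mix; I would filter $A^*_d$ by the ``carry depth'' of the $p$-adic additions appearing in $\delta$ and induct, reducing to the monomial case $d = p^i$, whose analogue in the additive-by-additive setting is the tangent computation of the companion paper~\cite{HLP}. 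The labor here is combinatorial bookkeeping on the filtration and its associated graded rather than a single conceptual step.
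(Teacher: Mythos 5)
Your overall strategy --- weight-filter the Lubin--Tate cochain (cobar) complex and split it along base-$p$/Frobenius structure --- is essentially the coalgebra-side shadow of the paper's argument, which instead dualizes: it identifies $H^*(\G_a;\G_a)$ with $\Ext_{\Gamma[x]}(R,R)$ over the divided power algebra, uses the algebra splitting $\Gamma_{\F_p}[x] \cong \bigotimes_i T[x^{[p^i]}]$ (truncated polynomial factors; exterior factors at $p=2$), and computes each factor with the Tate resolution. Your rational step and your cochain-level identities ($\fc{2^{i+1}}{2} \equiv x^{2^i}y^{2^i}$ mod $2$ by Kummer, and $a_i^2 = 0$ for odd $p$ by graded commutativity) are fine, but as you note they only give the lower bound. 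The gap is in the spanning argument, in two places. First, the splitting you propose is misstated: the correct statement is an exact tensor decomposition of the coalgebra $\F_p[x]$ into height-$i$ subcoalgebras spanned by $1, x^{p^i}, \ldots, x^{(p-1)p^i}$, exact because Lucas/Kummer makes every carried binomial coefficient vanish mod $p$ --- so there are no cross-terms in $\delta$ at all, and no carry-depth filtration is needed. Moreover the weight-$d$ piece decomposes over all profiles $(m_i)$ with $\sum_i m_i p^i = d$ (the $m_i$ may exceed $p-1$), not over the digits of $d$, and each height-$i$ factor contributes an entire algebra $\Lambda[a_i]\otimes\F_p[b_i]$ (resp.\ $\F_2[a_i]$ at $p=2$), not ``a single $a_i$, a single $b_i$, or nothing.''

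Second and more seriously, after the splitting you still owe the cohomology of a single height-$i$ factor in \emph{all} cochain degrees --- equivalently, that $\Ext$ over $\F_p[y]/y^p$ is $\Lambda[a]\otimes\F_p[b]$ and over $\Lambda[y]$ at $p=2$ is polynomial --- and your proposal offers no argument for this. The appeal to the tangent computation of the companion paper cannot supply it: that paper treats only (symmetric) $2$-cocycles, whereas here one needs, for instance, the full tower of classes $b_i^k$ in cohomological degree $2k$, and even the single weight $d = p^i$ carries classes (such as $a_{i-1}b_{i-1}^{p-1}$ for odd $p$) invisible to that reference. This per-factor calculation is exactly what the paper's use of the Tate resolution (the differential graded algebra $\Lambda[a]\otimes\Gamma[b]$ with $da = y$ and $db^{[j]} = ab^{[j-1]}y^{p-1}$) accomplishes; some such input --- a Tate/Koszul resolution, or an explicit cobar computation for the truncated coalgebra --- must be added before your argument closes.
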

\begin{proof}
This is an exercise in homological algebra and application of the Tate resolution~\cite{Tate}.  The key ideas are that this boils down to computing $\Ext_{R[x]}(R, R)$ in the category of $R[x]$-comodules, that the P.D. algebra $\Gamma[x]$ is the dual of $R[x]$, and that this $\Ext$-calculation is isomorphic to $\Ext_{\Gamma[x]}(R, R)$ in the category of $\Gamma[x]$-modules.  From here, we split into cases:
\begin{enumerate}
\item $\Ext_{\Gamma_\Q[x]}(\Q, \Q)$: The chain complex
\begin{center}
\begin{tikzpicture}[
        normal line/.style={-stealth},
    ]
    \matrix (m) [matrix of math nodes, 
         row sep=2em, column sep=3em,
         text height=1.5ex, 
         text depth=0.25ex]{
0 & \Q[a] & \Q[b] & 0 \\
    };
    \path[normal line]
        (m-1-2) edge (m-1-1)
        (m-1-3) edge node[above]{$a \mapsfrom 1$} (m-1-2)
        (m-1-4) edge (m-1-3);
\end{tikzpicture}
\end{center}
is a projective resolution of $\Q$ with $\Q[a] \to \Q$ given by $a \mapsto 0$, so we compute $\Ext_{\Gamma_{\Q}[x]}(\Q, \Q)$ to be $\Lambda[b]$ as promised.
\item $\Ext_{\Gamma_{\F_2}[x]}(\F_2, \F_2)$: The algebra $\Gamma_{\F_2}[x]$ splits as the tensor product $\Gamma_{\F_2}[x] \cong \bigotimes_{i=0}^\infty \Lambda_{\F_2}[x^{[2^i]}]$, hence it suffices to compute $\Ext_{\Lambda_{\F_2}[y]}(\F_2, \F_2)$ and then tensor together those results.  The differential graded algebra described by Tate which computes $\Ext_{\Lambda[y]}(\F_2, \F_2)$ is given by $R_* = \Gamma[a] \cong \bigotimes_{i=0}^\infty \Lambda[a^{[2^i]}]$ with differential $da^{[j]} = a^{[j-1]}y$.  Therefore, \[\Ext_{\Gamma[x]}(\F_2, \F_2) \cong \bigotimes_{i=0}^\infty \Ext_{\Lambda[x^{[2^i]}]}(\F_2, \F_2) \cong \bigotimes_{i=0}^\infty \Hom(\Gamma[a_i], \F_2) \cong \bigotimes_{i=0}^\infty \F_2[a_i^\vee].\]
\item $H^*(\G_a; \G_a)(\F_p)$: Just as before, $\Gamma_{\F_p}[x]$ splits as a product of algebras $\Gamma_{\F_p}[x] \cong \bigotimes_{i=0}^\infty T[x^{[p^i]}]$, where $T[y]$ denotes the truncated polynomial algebra $T[y] = \F_p[y] / y^p$.  Hence, we reduce to calculating $\Ext_{T[y]}(\F_p, \F_p)$.  The Tate differential graded algebra is described by $\Lambda[a] \otimes \Gamma[b]$, where $da = y$ and $db^{[j]} = ab^{[j-1]}y^{p-1}$.  Therefore, \[\Ext_{\Gamma[x]}(\F_p, \F_p) \cong \bigotimes_{i=0}^\infty \Ext_{T[x^{[p^i]}]}(\F_p, \F_p) \cong \bigotimes_{i=0}^\infty \Hom(\Lambda[a_i] \otimes \Gamma[b_i], \F_p) \cong \bigotimes_{i=0}^\infty \Lambda[a_i^\vee] \otimes \F_p[b_i^\vee].\qedhere\]
\end{enumerate}
\end{proof}

\subsection{Calculation of $T_1 C^*(\G_a; \G_m)$}

The scheme $C^k(\G_a; \G_m)$ is also a group scheme, so also has a tangent space.

\begin{lem}
$T_1 C^k(F; G)$ is $C^k(F; T_1 G)$.
\end{lem}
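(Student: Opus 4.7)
The proof will be nearly identical in spirit to \prettyref{lem:T1GCommute}, so the plan is to imitate that argument and check that the extra conditions cutting $C^k$ out of $A^k$ (namely the symmetry under $\Sigma_k$ and the symmetric $2$-cocycle equation) both translate cleanly across the $T_1$ functor.

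First I would unwind the definition of $T_1$: a point of $T_1 C^k(F;G)(S)$ is a point of $C^k(F;G)(S[\epsilon]/\epsilon^2)$ whose reduction modulo $\epsilon$ is the identity cocycle, i.e.\ the constant $1_G$. Working in coordinates, such a point is represented by a $k$-variate power series of the form $u = 0 + \epsilon u_+$ for some $u_+ \in S\llbracket x_1,\ldots,x_k\rrbracket$. I would then verify the following two claims, and conclude that $u \mapsto u_+$ sets up a natural bijection $T_1 C^k(F;G)(S) \cong C^k(F; T_1 G)(S)$.

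Second I would check the symmetry condition. The equation $u(\sigma x) = u(x)$ holds in $G$ if and only if $\epsilon u_+(\sigma x) = \epsilon u_+(x)$, which, since $\epsilon$ is a nonzero-divisor on itself in $S[\epsilon]/\epsilon^2$, is equivalent to $u_+(\sigma x) = u_+(x)$. Third I would check the $2$-cocycle relation. Because every formal group law satisfies $a +_G b = a + b + o(2)$, the quadratic tails vanish when we plug in $\epsilon$-multiples (as $\epsilon^2 = 0$), so $\epsilon a +_G \epsilon b = \epsilon(a+b)$ and $-_G \epsilon a = -\epsilon a$. Consequently the symmetric $2$-cocycle equation for $u$ reduces, after factoring out the common $\epsilon$, to the \emph{additive} symmetric $2$-cocycle equation for $u_+$, which is exactly the condition that $u_+$ lie in $C^k(F; T_1 G)$, using the identification $T_1 G \cong \G_a$.

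Conversely, given any $u_+ \in C^k(F; T_1 G)(S)$, the power series $\epsilon u_+$ visibly satisfies both the symmetry and $2$-cocycle conditions in $C^k(F;G)(S[\epsilon]/\epsilon^2)$ and reduces to $0$ modulo $\epsilon$, so we obtain the inverse map. There is no real obstacle here: the only thing one might worry about is whether higher-order terms of the group law in $G$ contribute extra constraints to $u_+$, but the calculation above shows they are killed by $\epsilon^2 = 0$, so the argument is as painless as the $H^*$ case.
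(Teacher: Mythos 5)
Your proposal is correct and follows exactly the paper's route: the paper simply says the proof is identical to \prettyref{lem:T1GCommute}, and your argument is that argument (points of the tangent space are $0+\epsilon u_+$, and since $\epsilon^2=0$ the group law in $G$ linearizes, so the symmetry and $2$-cocycle conditions on $u$ become the additive ones on $u_+$), with the symmetry check made explicit.
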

\begin{proof}
This is identical to \prettyref{lem:T1GCommute}.
\end{proof}
\begin{cor}
$T_1 C^k(\G_a; \G_m) \cong C^k(\G_a; \G_a)$.
\end{cor}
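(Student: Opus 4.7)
The plan is to mirror the argument used in \prettyref{lem:T1GCommute} line by line, since $C^k(F;G)$ is cut out of $A^k(F;G)$ by symmetry and the symmetric $2$-cocycle equation, both of which behave well under passage to dual numbers. First I would unfold the functor of points: a point of $T_1 C^k(F;G)(S)$ is, by definition of the tangent space at the identity, a symmetric multiplicative $k$-cocycle $u:F^{\times k}\to G$ defined over $S[\epsilon]/\epsilon^2$ whose reduction modulo $\epsilon$ is the identity cocycle $u\equiv 0$. Every such $u$ therefore has the shape $u=\epsilon u_+$ for a single power series $u_+$ in $k$ variables with coefficients in $S$.

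Next I would check that the defining equations of $C^k(F;G)$ translate into the defining equations of $C^k(F;T_1 G)$ for $u_+$. The symmetry condition $u(\sigma x)=u(x)$ transfers immediately to $u_+(\sigma x)=u_+(x)$. For the $2$-cocycle relation, the key observation --- already used in \prettyref{lem:T1GCommute} --- is that because $\epsilon^2=0$ and every formal group law over $G$ has the shape $x+_G y = x+y+o(2)$, all higher order corrections vanish when one evaluates $+_G$ on multiples of $\epsilon$; that is,
\[
(\epsilon a)+_G(\epsilon b) = \epsilon(a+b),\qquad -_G(\epsilon a) = -\epsilon a.
\]
Therefore the alternating sum built from $+_G$ and $-_G$ in the definition of $C^k(F;G)$ simply becomes $\epsilon$ times the corresponding additive alternating sum, and it vanishes if and only if $u_+$ satisfies the additive symmetric $2$-cocycle relation with values in $T_1 G$.

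This shows one inclusion; the reverse direction is equally formal. Given a point $u_+\in C^k(F;T_1 G)(S)$, the power series $\epsilon u_+$ is a well-defined symmetric multiplicative cocycle over $S[\epsilon]/\epsilon^2$ reducing to $0$ modulo $\epsilon$, hence defines a point of $T_1 C^k(F;G)(S)$, and the two assignments are mutually inverse and natural in $S$. I do not foresee any real obstacle: the content is entirely that passing to dual numbers replaces the formal group $G$ by its Lie algebra $T_1 G$ uniformly throughout the cocycle conditions, exactly as in the cohomological case.
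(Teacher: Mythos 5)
Your argument is correct and is essentially the paper's own: the paper proves the preceding lemma ($T_1C^k(F;G)\cong C^k(F;T_1G)$) by declaring it ``identical to Lemma~\ref{lem:T1GCommute},'' which is exactly the dual-numbers unfolding you carry out, and the corollary then follows from $T_1\G_m\cong\G_a$. No substantive difference from the paper's route.
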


\begin{defn}\label{def:zeta}
Let $\fc{n}{k}$ denote the integral polynomial \[\fc{n}{k} = \phi(n, k)^{-1} \sum_{\substack{X \subseteq \{x_1, \ldots, x_k\} \\ X \ne \emptyset}} \left( (-1)^{|X|} \cdot \left( \sum_{x \in X} x \right)^n    \right) = \phi(n, k)^{-1} \sum_{\substack{\lambda \vdash n \\ \ell(\lambda) = k}} \binom{n}{\lambda} \mathbf{x}^\lambda,\] where $\phi(n, k)$ is defined by \[\phi(n, k) = \gcd_{\lambda} \binom{|\lambda|}{(\lambda_1, \ldots, \lambda_k)} = \gcd_{\lambda} \left( n! \prod_i (\lambda_i!)^{-1} \right).\]
\end{defn}

\begin{lem}
The polynomial $\fc{n}{k}$ is an additive $2$-cocycle.  The space of rational $2$-cocycles is isomorphic to the free $\Q$-module $\Q\{\fc{n}{k} : 0 \le n, k < \infty\}$.
\end{lem}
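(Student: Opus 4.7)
The verification splits into a cocycle check and a spanning check.

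For the cocycle check, symmetry of $\fc{n}{k}$ is immediate from the multinomial sum in \prettyref{def:zeta}. To show $\delta_1 \fc{n}{k} = 0$ in the first slot (whence the full $2$-cocycle identity, by symmetry), I would use the equivalent inclusion-exclusion form $\phi(n,k) \fc{n}{k} = \sum_{\emptyset \ne X \subseteq \{1, \ldots, k\}} (-1)^{|X|} N_X$, with $N_X = (\sum_{i \in X} x_i)^n$. Applying $\delta_1$ termwise, I would do casework on $\{1, 2\} \cap X$: if $\emptyset$, all four coboundary-terms of $\delta_1 N_X$ coincide and cancel; if $\{1\}$, the last two coincide, leaving $(x_1 + Z)^n - (x_0 + x_1 + Z)^n$; if $\{2\}$, the first two coincide, leaving $(x_1 + x_2 + Z)^n - (x_1 + Z)^n$; and if $\{1, 2\}$, the middle two coincide, leaving $(x_1 + x_2 + Z)^n - (x_0 + x_1 + Z)^n$, where $Z = \sum_{i \in X,\, i \ge 3} x_i$ throughout. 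Grouping subsets $X$ by $X'' = X \cap \{3, \ldots, k\}$, the four resulting contributions, appropriately $(-1)^{|X|}$-weighted, telescope to zero.

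For the spanning check, let $f$ be any rational symmetric $2$-cocycle and decompose into homogeneous components $f_n$. Substituting $x_0 = 0$ into the cocycle identity forces $f_n(0, y, x_3, \ldots, x_k)$ to be independent of $y$; combined with full $\Sigma_k$-symmetry of $f_n$ and $n \ge 1$, iterating yields $f_n|_{x_i = 0} = 0$ for every $i$, so $x_1 \cdots x_k \mid f_n$ (in particular $f_n = 0$ whenever $n < k$, matching $\fc{n}{k}$). To pin down the remaining symmetric freedom of degree $n - k$ as a single line spanned by $\fc{n}{k}$, I would appeal to the classification of symmetric additive $k$-variate $2$-cocycles from the authors' previous paper~\cite{HLP}. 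Linear independence of the $\fc{n}{k}$ as $n$ varies is immediate from the degree grading.

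The cocycle check is routine bookkeeping. The main obstacle is the spanning step: once divisibility by $x_1 \cdots x_k$ is known, one still has to reduce the remaining degree-$(n-k)$ symmetric freedom to a single scalar using the full $k$-variate cocycle relations. That combinatorial reduction is exactly the content of \cite{HLP}, which I would invoke rather than redo.
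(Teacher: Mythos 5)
Your direct verifications are sound: the casework on $X \cap \{1,2\}$ applied to the inclusion--exclusion presentation of $\phi(n,k)\,\fc{n}{k}$ from \prettyref{def:zeta} does telescope to zero, and the reduction showing that a homogeneous symmetric rational cocycle $f_n$ vanishes upon setting any variable to zero (hence is divisible by $x_1 \cdots x_k$, and vanishes when $n < k$) is correct. For comparison, the paper proves none of this by hand: its entire proof is the citation of Ando--Hopkins--Strickland \cite[Proposition A.1]{AHS}, so your explicit cocycle check is additional content rather than a divergence.

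The gap is in the step you yourself flag as the main obstacle. After extracting the factor $x_1 \cdots x_k$ one still has a symmetric degree-$(n-k)$ ambiguity of dimension equal to the number of symmetric monomials of that degree, and cutting it down to the single line $\Q \cdot \fc{n}{k}$ \emph{is} the content of the lemma; you outsource it to \cite{HLP}, but that paper classifies symmetric additive $2$-cocycles over $\Z/p$ (with only partial results over $\Z/p^k$), and it takes the rational classification as an input quoted from \cite{AHS} rather than reproving it. As written, the appeal is therefore misplaced and at risk of circularity, and it also mischaracterizes \cite{HLP}: the mod-$p$ answer is much larger (the $\tau(\mu)$ classes), so the ``combinatorial reduction'' you need is not what that paper establishes. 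The statement you want is precisely \cite[Proposition A.1]{AHS}, which is what this paper cites; alternatively, the rational case admits a short self-contained argument, e.g.\ handle $k = 2$ first (a homogeneous symmetric rational two-variable cocycle of degree $n \ge 2$ is a scalar multiple of $(x+y)^n - x^n - y^n$, being the coboundary of $c\,x^n$ since rational extensions of $\G_a$ by $\G_a$ split), and then induct on $k$ by regarding a symmetric $k$-variate cocycle as a two-variable cocycle in $(x_1, x_2)$ with coefficients in $\Q[x_3, \ldots, x_k]$. Either carry out that induction or cite \cite{AHS} for it; citing \cite{HLP} does not close the argument.
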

\begin{proof}
This is a result of Ando, Hopkins, and Strickland~\cite[Proposition A.1]{AHS}.
\end{proof}

\begin{thm}\label{thm:AdditiveStructure}
Write $G_{i, j}$ for the ``gathering operation'' on multi-indices $\lambda$ described by \[G_{i, j}(\lambda) = (\lambda \setminus (\lambda_i, \lambda_j)) \cup (\lambda_i + \lambda_j).\]  Pick a partition $\lambda$ whose entires are all powers of $p$, select an integer $m$, and let $T^m \lambda$ be defined as the set of all $m$-fold gatherings of $\lambda$ (i.e., partitions of the form $G_{i_1, j_1} \cdots G_{i_m, j_m} \lambda$).  Then, if either $\lambda$ is the shortest power-of-$p$ partition of $|\lambda|$ or if $0 \le m < p-1$, the following sum is a cocycle in $\Z/p$: \[ \sum_{\mu \in T^m \lambda} c_\mu \tau(\mu),\] where $c_\mu$ denotes the coefficient of $\tau(\mu)$ in $\fc{|\lambda|}{\ell(\lambda)-m}$.  The cocycles formed in this manner form a basis for the vector space of cocycles in $\Z/p$.
\end{thm}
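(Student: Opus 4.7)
My plan is to start from the rational basis $\{\fc{n}{k}\}$ and understand which $\F_p$-linear combinations of its monomials survive as cocycles after reduction modulo $p$, using Kummer's theorem as the main combinatorial tool.

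First, since each $\fc{n}{k}$ is integral by construction (we divided by the gcd $\phi(n,k)$), its reduction mod $p$ is automatically an additive $2$-cocycle over $\F_p$. The coefficient of the monomial $\tau(\mu)$ in $\fc{n}{k}$ is $\binom{n}{\mu}/\phi(n,k)$, and this survives mod $p$ precisely when $\nu_p\binom{n}{\mu}=\nu_p\phi(n,k)$, i.e.\ when $\nu_p\binom{n}{\mu}$ is minimal among all length-$k$ partitions of $n$. By Kummer's theorem this minimum equals the fewest possible carries incurred in the base-$p$ addition $\mu_1+\cdots+\mu_k=n$, so the mod-$p$ support of $\fc{n}{k}$ consists of those partitions of $n$ that sum as carry-freely as possible in base $p$.

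Next, I would identify this minimum-carry support with the gathering sets $T^m\lambda$. Every partition $\mu$ of $n$ has a unique \emph{finest} power-of-$p$ refinement, obtained by expanding each entry in base $p$ and listing the resulting powers with multiplicity; any gathering preserves the minimum-valuation condition so long as it does not combine enough equal powers of $p$ to force a carry. The two hypotheses in the theorem are exactly what prevents such a bad gathering. If $\lambda$ is the shortest power-of-$p$ partition of $|\lambda|$ then each power of $p$ already appears fewer than $p$ times in $\lambda$, and merging distinct powers is always carry-free; if $m<p-1$ then fewer than $p-1$ gatherings have been performed in total, so the multiplicity of any single power of $p$ in the resulting partition is still below $p$. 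In either case $T^m\lambda$ sits inside the minimum-carry locus.

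Finally, I would derive the cocycle and basis claims. The cocycle property of the restricted sum $\sum_{\mu\in T^m\lambda}c_\mu\tau(\mu)$ would follow from the cocycle property of the full polynomial $\fc{|\lambda|}{\ell(\lambda)-m}$ mod $p$ once I show that the cocycle differential preserves the decomposition of monomials into $T^m\lambda$-orbits indexed by the finest refinement $\lambda$; this is a combinatorial statement about how the face maps (which merge or split coordinates) interact with base-$p$ refinement. Linear independence of the collection is immediate since distinct admissible pairs $(\lambda,m)$ produce sums supported on disjoint monomials (the refinement $\lambda$ is canonically recoverable from any $\mu$ in the support). Spanning follows by observing that every cocycle in $Z^*(\G_a;\G_a)(\F_p)$ arises as the mod-$p$ reduction of a $\Z_{(p)}$-valued cocycle, and the latter are spanned by the $\fc{n}{k}$ after the normalization built into their definition. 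The main obstacle will be this middle assertion: verifying that the coboundary operator applied to $\tau(\mu)$ produces only $\tau(\mu')$ with $\mu'\in T^{m\pm 1}\lambda$ up to terms vanishing mod $p$. I expect this to require induction on $m$ and a careful accounting of how carries propagate across face maps, with the admissibility hypotheses entering to rule out the single bad case where a gathering would push the multiplicity of a power of $p$ up to exactly $p$.
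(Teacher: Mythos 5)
You should first know that the paper does not actually prove this statement: it is recalled from the authors' earlier paper (the citation to [HLP, Corollary 3.4.10 and Theorem 3.6.2] attached to \prettyref{thm:GaRepresentingRing}), so your proposal has to stand on its own. Parts of it do: the identification of the mod-$p$ support of $\fc{n}{k}$ with the carry-minimal partitions via Kummer/Legendre is correct, and the disjointness of the sets $T^m\lambda$ for distinct admissible pairs $(\lambda,m)$ in a fixed bidegree is true and provable along the lines you gesture at --- under either hypothesis a valuation count forces every gathering to be carry-free, so each entry of $\mu$ is the carry-free sum of the parts of $\lambda$ feeding into it and $\lambda$ is recovered as the entrywise base-$p$ digit refinement of $\mu$ (note this is the \emph{shortest} power-of-$p$ refinement, not the ``finest'' one, which would be all $1$'s). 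But the statement's central claim --- that the restriction of $\fc{n}{k} \bmod p$ to a single orbit $T^m\lambda$ is \emph{separately} annihilated by the coboundary --- is exactly the step you defer as ``the main obstacle.'' It does not follow formally from $\fc{n}{k}$ being a cocycle together with any soft compatibility of the differential with the orbit decomposition: the differential genuinely mixes monomials whose refinements differ, and for non-admissible $(\lambda,m)$ the analogous orbit sums are \emph{not} cocycles, so any correct argument must use the hypotheses in an essential, quantitative way. As written, the proposal is a plan whose key lemma is missing.

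The spanning argument is moreover wrong as stated. It is not true that every additive $2$-cocycle over $\F_p$ is the reduction of a $\Z_{(p)}$-valued cocycle: in a fixed homogeneous bidegree $(n,k)$ the rational (hence $\Z_{(p)}$-integral, after normalizing by $\phi(n,k)$) cocycle space is one-dimensional, spanned by $\fc{n}{k}$, whereas the modular cocycle space typically has dimension greater than one --- this is precisely what the theorem asserts whenever there are several admissible pairs $(\lambda,m)$ with $\ell(\lambda)-m=k$, and it is visible both in \prettyref{fig:Stratification} (e.g.\ $\tau(9,1,1,1)$ and $\tau(3,3,3,3)$ are independent cocycles in the same bidegree) and in the $p$-torsion generators $b_{p,n,i}$ of \prettyref{thm:GaRepresentingRing}. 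Your lifting argument would cap the modular dimension at one per bidegree and so contradicts the statement being proved; the reduction of $\fc{n}{k}$ is the \emph{sum} of the orbit cocycles, not a spanning set for them. To get spanning you need an independent upper bound on $\dim_{\F_p} Z^k$ in each degree (equivalently, the count of admissible pairs), which is the substantial combinatorial content of the previous paper and is not supplied by your outline.
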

\begin{thm}\label{thm:GaRepresentingRing}
There is an isomorphism \[C^k(\G_a; \G_m) \cong \spec \Z[c_n \mid n \ge k] \otimes \left(\bigotimes_{\hbox{$p$ prime}} \frac{\Z_{(p)}[b_{p,n,i} \mid 1 \le i < D_{n,k}^p]}{\<pb_{p,n,i}\>} \right),\] where $D_{n, k}^p$ counts the number of power-of-$p$ partitions of $n$ into $k'$ parts, where $k'$ is the smallest possible size equal to or greater than $k$.
\end{thm}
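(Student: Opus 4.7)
The plan is to analyze $C^k(\G_a; \G_m)$ prime by prime via an arithmetic fracture square, upgrading the rational classification of cocycles to an integral statement and bolting on the $p$-torsion needed to account for each prime.

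First I would extract the rational generators. Over $\Q$, the formal logarithm gives an isomorphism of formal groups $\log : \G_m \isoto \G_a$, so postcomposition induces $C^k(\G_a;\G_m) \otimes \Q \cong C^k(\G_a;\G_a) \otimes \Q$. By the lemma immediately following \prettyref{def:zeta}, the right-hand side is the free $\Q$-module on $\{\fc{n}{k} : n \ge k\}$ (the vanishing for $n < k$ is automatic since no partition of $n$ into $k$ positive parts exists). This yields candidate rational generators $c_n$, and the task becomes choosing integral lifts and describing the kernel and cokernel of the comparison map $C^k(\G_a;\G_m) \to C^k(\G_a;\G_m) \otimes \Q$.

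Next, working $p$-locally, I would compute $C^k(\G_a;\G_m) \otimes \Z_{(p)}$ by comparison with the additive side. Since $\fc{n}{k}$ has gcd-$1$ integer coefficients by construction, one can write down an explicit $\G_m$-valued cocycle $c_n$ whose image under the (partial) mod-$p$ logarithm reduces to $\fc{n}{k}$; checking that the $\G_m$-cocycle condition holds integrally amounts to verifying that the correction terms arising from the nonlinearity $x +_{\G_m} y = x + y + xy$ are already encoded in the symmetry of $\fc{n}{k}$. Reduction mod $p$ then maps $c_n$ to a definite class in $C^k(\G_a;\G_a)(\F_p)$, which by \prettyref{thm:AdditiveStructure} corresponds to one distinguished power-of-$p$ partition of $n$ among all such partitions into $k'$ parts, where $k'$ is the smallest length $\ge k$ that admits a power-of-$p$ partition of $n$.

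The $p$-torsion generators $b_{p,n,i}$ then arise from the remaining mod-$p$ cocycles not accounted for by $c_n$. By \prettyref{thm:AdditiveStructure}, the total count of independent mod-$p$ cocycles in degree $n$ of length $k'$ is $D_{n,k}^p$; subtracting the one absorbed by $c_n$ gives $D_{n,k}^p - 1$ classes indexed by $1 \le i < D_{n,k}^p$. These cannot lift to characteristic zero (they have no rational counterpart), so multiplying by $p$ yields a $\Q$-trivial class, which by the integrality of the comparison must already be zero in $C^k(\G_a;\G_m)(\Z_{(p)})$, giving the relation $p b_{p,n,i} = 0$. Moreover the polynomial support of distinct $b_{p,n,i}$ classes is disjoint mod $p$, which forces no further relations.

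Finally I would assemble the result by the arithmetic fracture square $\Z = \Q \times_{\Q \otimes \widehat{\Z}} \widehat{\Z}$: the free part on $c_n$ comes from the rational piece, while the $\bigotimes_p \Z_{(p)}[b_{p,n,i}]/\<pb_{p,n,i}\>$ pieces are the $p$-local torsion contributions, which glue together independently because the $b_{p,n,i}$ are killed by $p$ and the $c_n$ are torsion-free. The main obstacle is the third step: rigorously pinning down which single mod-$p$ class in the $D_{n,k}^p$-dimensional family is absorbed by the $c_n$ lift, and verifying that no cross-relations among the $b_{p,n,i}$ survive from the $\G_m$-cocycle condition. This is exactly the content of the detailed computation in \cite{HLP}, on which the proof rests.
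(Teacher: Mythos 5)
Your proposal reads the statement at face value as a computation of the multiplicative scheme $C^k(\G_a;\G_m)$, but that is not the statement the paper proves or uses: despite the $\G_m$ in the display, this theorem is the integral classification of the \emph{additive} cocycle scheme $C^k(\G_a;\G_a)\cong T_1C^k(\G_a;\G_m)$ imported from \cite{HLP} (its proof cites the additive classification, and \prettyref{cor:BigZ2Thm} uses it as the additive input to the obstruction theory). Read multiplicatively the claimed presentation is actually false --- it would contradict \prettyref{thm:BigF2Thm} and \prettyref{cor:BigZ2Thm}, whose $2$-primary answer contains divided-power factors $\Gamma[b_{n,\gamma_2(n,k)}]$ and relations $b_{n,i}^2=0$ --- and the two steps that make your argument run are exactly the ones that fail in the multiplicative setting. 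The existence of an integral $\G_m$-valued lift $c_n$ of $\fc{n}{k}$ is not a formal consequence of symmetry of $\fc{n}{k}$: whether $\fc{n}{k}$ extends multiplicatively is governed by the comparison of $\nu_p\phi(n,k)$ with $\nu_p(n)$ (\prettyref{thm:ArtinHasseExtension}), and \prettyref{lem:fcObstructions} shows it is obstructed when $\nu_2\phi(n,k)>\nu_2 n$; likewise the assertion that the $b_{p,n,i}$ satisfy ``no further relations'' is false for $\G_m$ at $p=2$.

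Even granting the additive reading, your prime-by-prime fracture assembly has a gap at precisely the point the paper's proof is designed to fill. You deduce $pb_{p,n,i}=0$ from ``rationally trivial plus integrality of the comparison,'' but a rationally trivial class is merely torsion; knowing the functor on $\Q$-algebras and on $\F_p$-algebras does not rule out cocycle classes of order $p^2$ (equivalently, $\Z/p^2$-phenomena in the representing ring), and an arithmetic fracture square does not by itself determine the $\Z_{(p)}$-points from those two pieces. The paper acknowledges exactly this: \cite{HLP} establishes the classification only for $\F_p$-algebras, and the proof here supplies the missing ingredient, namely that over $\Z/p^2$ the only additive cocycle with leading coefficient not divisible by $p$ is $\fc{n}{k}$, proved via the mod-$p^2$ expansion of $(a+b)^{p^j}$, which enlarges the annihilator sets so that all carry-minimal partitions become interchangeable. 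Your proposal defers precisely this verification back to \cite{HLP} (``exactly the content of the detailed computation''), but that computation does not contain it; some explicit argument over $\Z/p^2$ (or an equivalent bound on the torsion) must be made.
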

\begin{proof}[Proof]
This is the main result of the previous paper~\cite[Corollary 3.4.10 and Theorem 3.6.2]{HLP}.  However, there is a gap in our proof that we must remark on: the classification there correctly demonstrates this result for $\F_p$-algebras, but does not provide enough to conclude the result for $\Z_{(p)}$-algebras.  To this end, it suffices to show that $\fc{n}{k}$ is the only additive cocycle over $\Z/p^2$ with leading coefficient not divisible by $p$.  The key is that, whereas in characteristic $p$ we have $(a+b)^{p^j} = a^{p^j} + b^{p^j}$, working in $\Z/p^2$ we instead have $(a+b)^{p^j} = a^{p^j} + b^{p^j} + \sum_{i=1}^{p-1} \binom{p^j}{ip^{j-1}} a^{p^{j-1}i} b^{p^{j-1}(p-i)}$, where now $\binom{p^j}{ip^{j-1}}$ is nonzero mod $p^2$.  This has the effect of, in the earlier language, enlarging our annihilator sets dramatically --- namely, any carry minimal partition contains in its annihilator set all other carry-minimal partitions, since we are now able to split and regather power-of-$p$ entries.
\end{proof}

\begin{rem}
We reproduce a diagram from that paper to get a sense of what this scheme looks like.  Leftward arrows denote the gathering operation described above, and rightward arrows denote writing $3^n$ as $3^{n-1} + 3^{n-1} + 3^{n-1}$.
\begin{figure}[htp]
\begin{tikzpicture}[
        normal line/.style={-stealth},
    ]
    \matrix (m) [matrix of math nodes, 
         row sep=2em, column sep=1em,
         text height=1.5ex, 
         text depth=0.25ex]{
                     & \drawbox{\tau(9, 2, 1) - \\
                                \tau(10, 1, 1)} & \drawbox{\tau(9, 1, 1, 1)} &  \\
\drawbox{\tau(9, 3)} &                          & \drawbox{\tau(3, 3, 3, 3)} &                              & \drawbox{\tau(3, 3, 3, 1, 1, 1)} & \cdots \\
                     & \drawbox{\tau(6, 3, 3)}  &                            & \drawbox{\tau(6, 3, 1, 1, 1) \\
                                                                                        \tau(4, 3, 3, 1, 1) \\
                                                                                        \tau(3, 3, 3, 2, 1)} \\
    };
    \path[normal line]
        (m-2-1) edge (m-2-3)
                edge (m-1-3)
        (m-1-3) edge (m-1-2)
                edge (m-2-5)
        (m-2-3) edge (m-3-2)
                edge (m-2-5)
        (m-2-5) edge (m-3-4)
                edge (m-2-6);
\end{tikzpicture}
\caption{The homogeneous degree $12$ part of $C^k(\G_a; \G_m)$ for $2 \le k \le 6$.}
\label{fig:Stratification}
\end{figure}
\end{rem}

\section{The tangent spectral sequence}\label{sec:TangentSS}

Now we use the information local to the origin computed in \prettyref{sec:CalcTangent} to produce information about the entire scheme $H^*(\G_a; \G_m)$ through successive approximations.  This procedure assembles into a spectral sequence, and we will specifically investigate a family of nontrivial differentials.

\begin{thm}
There exists a convergent filtration spectral sequence of type \[T_1 H^*(F; G)(R) = H^*(F; \G_a) \Rightarrow H^*(F; G)(R).\]
\end{thm}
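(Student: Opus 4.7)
The plan is to construct the spectral sequence via a standard adic-filtration argument. Fix a complete augmented $R$-algebra $(S, I)$ on which to evaluate, and define a decreasing filtration on the cochain complex by
\[F^p A^n(F; G)(S) := \ker\Bigl(A^n(F; G)(S) \longrightarrow A^n(F; G)(S/I^p)\Bigr).\]
Since $\delta^n$ is induced by the group law on $F$ and is therefore natural in the coefficient algebra, it carries $F^p A^n$ into $F^p A^{n+1}$; this produces a filtered cochain complex, whose associated spectral sequence will be our candidate.

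To identify the $E_1$ page, observe that $S/I^{p+1} \twoheadrightarrow S/I^p$ is a square-zero extension with kernel $I^p/I^{p+1}$, which is precisely the setting of \prettyref{lem:T1GCommute} (that lemma is the case $p = 1$, $S = R[\epsilon]/\epsilon^2$). Rerunning the same computation --- inverting in $G$ modulo a square-zero ideal, linearizing the formal group law, and translating the cocycle condition --- produces a natural isomorphism
\[F^p A^n(F; G) \,/\, F^{p+1} A^n(F; G) \;\cong\; A^n(F; T_1 G) \otimes_R (I^p/I^{p+1}),\]
under which the induced $E_0$-differential becomes the Lubin-Tate coboundary on $A^*(F; T_1 G)$. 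Passing to cohomology of the associated graded yields
\[E_1^{p,q} \;\cong\; H^q(F; T_1 G) \otimes_R (I^p/I^{p+1}) \;=\; T_1 H^q(F; G) \otimes_R (I^p/I^{p+1}),\]
which for $G = \G_m$ specializes to $H^q(F; \G_a) \otimes (I^p/I^{p+1})$, matching the form of the theorem.

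Convergence will follow from the standard criterion for adic filtrations on a cochain complex: the filtration is bounded above by $F^0 = A^n(F; G)(S)$, is Hausdorff because $\bigcap_p I^p = 0$, and is complete because $S = \varprojlim_p S/I^p$. Under these three conditions the spectral sequence converges strongly to the associated graded of $H^*(F; G)(S)$ with respect to the induced filtration.

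The main technical content is the identification of $E_0$. The pointwise computation is an essentially routine generalization of \prettyref{lem:T1GCommute}: since $I^p/I^{p+1}$ is square-zero over $S/I^p$, every formal power-series operation on $G$ (the inverse, the group law, and hence the coboundary $\delta$) collapses to its linear part, and the Lubin-Tate cocycle condition linearizes to the cocycle condition for $A^n(F; T_1 G)$. What requires genuine care is naturality: the identifications across varying $p$ must be compatible so as to assemble into a single spectral sequence, and the $d_r$-differentials for $r \ge 2$ must be identified with the obstruction-theoretic operations that lift a cocycle modulo $I^{p+r}$ to one modulo $I^{p+r+1}$. This naturality is what makes the spectral sequence a practical tool for the differential computations of \prettyref{sec:TangentSS}.
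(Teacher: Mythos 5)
Your construction gives a correct spectral sequence, but it is not the paper's: you use a genuinely different filtration. The paper filters $\sheaf{O}(A^n)$ by the order of vanishing in the \emph{source} variables $x_1, \ldots, x_n$, so the associated graded consists of homogeneous polynomials of degree $d$; passing to leading terms then linearizes both formal group laws, and the $E_1$-page is really $H^*(\G_a;\G_a)$ graded by internal polynomial degree (which coincides with the stated $H^*(F;\G_a)$ only because the application has $F = \G_a$). You instead filter by the $I$-adic filtration on the \emph{coefficient} algebra $S$; this linearizes only $G$ (the structure constants of $F$ live in $R$, not in $I$), so your $E_1$-page is genuinely $H^*(F; T_1 G) \otimes I^p/I^{p+1}$ graded by adic degree $p$ --- closer to the letter of the theorem for general $F$. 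Both spectral sequences converge to $H^*(F;G)$, but the indexing of pages and differentials differs, and this matters in \prettyref{sec:TangentSS}: the paper's differential $d_{2^i+2^j}(ca_ia_j) = c^2(a_i^2a_{j+1} - a_{i+1}a_j^2)$ jumps by the polynomial degree $2^i+2^j$ of the source, whereas in your filtration (with $c$ generating $I$) the same phenomenon is a $d_1$, since $c \in I/I^2$ and $c^2 \in I^2/I^3$. Your route buys a cleaner conceptual fit with the square-zero mechanism of \prettyref{lem:T1GCommute} and would generalize better to $F \neq \G_a$, at the cost of a flatness hypothesis on $I^p/I^{p+1}$ over $R$ (needed to identify $H^q(F;T_1G)(I^p/I^{p+1})$ with $H^q(F;\G_a)(R)\otimes_R I^p/I^{p+1}$ without Tor corrections, automatic over a field but not in general) and of an indexing that hides the internal polynomial degree by which the paper's explicit differential formulas are organized.
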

\begin{proof}
Let $\sheaf{O}(A^n)$ be $\{\sum_I a_I \mathbf{x}^I \mid \mathbf{x} = (x_1, \ldots, x_n)\}$ the set of $k$-variate power series, which can be identified with the space of scheme-theoretic maps $F^n \to G$ for a pair of $1$-dimensional formal groups $F$ and $G$.  The set $\sheaf{O}(A^n)$ admits a descending filtration by leading degree $d$, denoted \[\sheaf{O}(A^n)_d = \left\{\sum_I a_I \mathbf{x}^I : \hbox{$a_I = 0$ whenever $|I| < d$}\right\}.\]  The inclusions $\sheaf{O}(A^n)_{d+1} \into \sheaf{O}(A^n)_d$ have filtration quotients described by polynomials of homogeneous degree, \[\sheaf{O}(A^n)_d / \sheaf{O}(A^n)_{d+1} \cong \left\{ f \in k[x_1, \ldots, x_n] : f = \sum_{|I| = d} a_I \mathbf{x}^I \right\}.\]  Moreover, this filtration respects the differentials in the complex computing Lubin-Tate cohomology, and so, writing $\sheaf{O}(Z^n)_d$ for the subspace of cocycles of $\sheaf{O}(A^n)_d$, we get a diagram of cochain complexes
\begin{center}
\begin{tikzpicture}[
        normal line/.style={-stealth},
    ]
    \matrix (m) [matrix of math nodes, 
         row sep=2em, column sep=2em,
         text height=1.5ex, 
         text depth=0.25ex]{
\cdots & \sheaf{O}(Z^*)_{d+2} & \sheaf{O}(Z^*)_{d+1} & \sheaf{O}(Z^*)_d & \cdots \\
       & \sheaf{O}(Z^*)_{d+2} / \sheaf{O}(Z^*)_{d+3} & \sheaf{O}(Z^*)_{d+1} / \sheaf{O}(Z^*)_{d+2} & \sheaf{O}(Z^*)_d / \sheaf{O}(Z^*)_{d+1}, \\
    };
    \path[normal line]
        (m-1-1) edge (m-1-2)
        (m-1-2) edge (m-1-3)
                edge (m-2-2)
        (m-1-3) edge (m-1-4)
                edge (m-2-3)
        (m-1-4) edge (m-1-5)
                edge (m-2-4);
\end{tikzpicture}
\end{center}
where each corner is a short exact sequence.  Hence, applying the homology functor produces a convergent spectral sequence of type \[\bigoplus_{d = 0}^\infty H^*\left(\sheaf{O}(Z^*)_d / \sheaf{O}(Z^*)_{d+1}\right) \Rightarrow H^*(F; G).\]  Moreover, because $F(x, y) = x + y \pmod{\< x, y \>^2}$ for all formal group laws $F$, we can identify the cohomologies $H^*(\sheaf{O}(Z^*)_d / \sheaf{O}(Z^*)_{d+1})$ with the subspace of $H^*(F; \G_a)$ consisting of cohomology classes representable as polynomials of homogeneous degree $d$.
\end{proof}

\begin{rem} 
This spectral sequence studies the way to correct a $d$-bud $u$ to a $(d+1)$-bud by adding in a polynomial of homogeneous top degree.  So, the $E_r$-page of the spectral sequence corresponds to classes containing $r$ layers of power series information, which limits to the $E_\infty$-page consisting of the actual power series we want.
\end{rem}

So, the previous calculation $H^*(\G_a; \G_a) = T_1 H^*(\G_a; \G_m)$ serves as input to the tangent spectral sequence.  Because we've seen that $H^*(\G_a; \G_a)(\F_2)$ and $H^*(\G_a; \G_a)(\F_p)$ for $p > 2$ differ, we break into these two cases when studying the spectral sequence.
\begin{thm}
Set $R = \F_2$, $F = \G_a$, and $G = \G_m$, and select $u_+ = c a_i a_j$ for $i \ne j$ and a coefficient $c$.  Then \[d_{2^i+2^j}(c a_i a_j) = c^2(a_i^2 a_{j+1} - a_{i+1} a_j^2).\]
\end{thm}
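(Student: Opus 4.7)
The plan is to evaluate $d_{2^i+2^j}$ directly at the cochain level. I would represent $a_i a_j$ by the Hochschild cup-product cochain $f(x,y) = c \cdot x^{2^i} y^{2^j}$, which, via Frobenius additivity $(x+y)^{2^k} = x^{2^k} + y^{2^k}$ in $\F_2$, is genuinely a $\G_a$-$2$-cocycle and which lies in filtration $2^i + 2^j$. Multiplying $c$ through is harmless, and the symmetry condition may be ignored here because the tangent spectral sequence is stated for $H^*$, not for $C^*$.

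Next, I would translate the spectral sequence differential into a concrete obstruction. Writing the $\G_m$-cocycle condition on $f$ multiplicatively as $(1 + f(y,z))(1 + f(x, y+z)) = (1 + f(x+y, z))(1 + f(x, y))$ and expanding, one obtains
\[
\delta_{\G_a} f(x,y,z) \; + \; \big[f(y,z)\,f(x, y+z) - f(x+y, z)\,f(x, y)\big] + (\text{filtration} > 2(2^i+2^j)) = 0.
\]
Since $f$ is a $\G_a$-cocycle the linear part vanishes identically, so the first non-trivial contribution to $\delta_{\G_m} f$ sits in filtration $2(2^i + 2^j)$ and equals the quadratic form $O(f) := f(y,z)\,f(x, y+z) - f(x+y, z)\,f(x, y)$. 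Unwinding the definition of the filtration spectral sequence, the class of $O(f)$ in $H^3$ of the associated graded is precisely $d_{2^i+2^j}(c a_i a_j)$.

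Finally, I would carry out the computation and identify the cohomology class. Substituting $f = c \cdot x^{2^i} y^{2^j}$ and expanding $(y+z)^{2^j}$ and $(x+y)^{2^i}$ using Frobenius additivity, the cross-terms $x^{2^i} y^{2^i + 2^j} z^{2^j}$ appearing in both halves of $O(f)$ cancel in characteristic $2$, leaving
\[
O(f) = c^2 \big(x^{2^i} y^{2^i} z^{2^{j+1}} + x^{2^{i+1}} y^{2^j} z^{2^j}\big).
\]
Under the cup-product correspondence $a_a \cdot a_b \cdot a_c \leftrightarrow x^{2^a} y^{2^b} z^{2^c}$ inside $H^3(\G_a;\G_a)(\F_2) \cong \bigotimes_i \F_2[a_i]$, this is the class $c^2(a_i^2 a_{j+1} + a_{i+1} a_j^2) = c^2(a_i^2 a_{j+1} - a_{i+1} a_j^2)$ in characteristic $2$, as claimed. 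The main obstacle is the careful translation of $d_{2^i+2^j}$ into the quadratic obstruction $O(f)$ (i.e., showing that the leading term of $\delta_{\G_m} f$ really is detected by the spectral sequence on the page $E_{2^i+2^j}$ rather than being affected by earlier differentials); the polynomial calculation itself is then elementary, and the answer is manifestly nonzero because $a_i^2 a_{j+1}$ and $a_{i+1} a_j^2$ are distinct monomial basis elements of $\bigotimes_i \F_2[a_i]$ when $i \ne j$.
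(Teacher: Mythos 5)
Your cochain-level computation is the same as the paper's: represent $c\,a_i a_j$ by $f(x,y)=c\,x^{2^i}y^{2^j}$, apply the multiplicative coboundary to the bud $1+f$, and read off the leading term. Your identification of that leading term with the quadratic form $O(f)=f(y,z)f(x,y+z)-f(x+y,z)f(x,y)$ is correct, though stated a bit loosely: inverting the denominator $(1+f(x+y,z))(1+f(x,y))$ also produces square terms $f(\cdot)^2$, which only disappear because the linear part $\delta_{\G_a}f$ vanishes and squaring is additive in characteristic $2$; the paper avoids this by expanding the quotient explicitly. The polynomial answer $c^2(x^{2^i}y^{2^i}z^{2^{j+1}}-x^{2^{i+1}}y^{2^j}z^{2^j})$ agrees with the paper's.

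The genuine gap is the step you yourself flag as ``the main obstacle'' and then do not carry out. Showing that $a_i^2 a_{j+1}-a_{i+1}a_j^2$ is a nonzero combination of monomial basis elements of $H^3(\G_a;\G_a)(\F_2)\cong\bigotimes_i\F_2[a_i]$ only establishes nonvanishing on the $E_1$-page; for the asserted equation to hold on the $E_{2^i+2^j}$-page you must also show that this class has not already been killed by an earlier differential (and that it supports no earlier differential of its own, so that it genuinely ``exists'' there as a permanent candidate target). The paper spends the second half of its proof on exactly this: degree considerations show the first possible differential \emph{out of} $a_i^2a_{j+1}-a_{i+1}a_j^2$ occurs only on the $E_{2^{i+1}+2^{j+1}}$-page, and the only class in $E_1^{1,t}$ with $2^i+2^j\le t<2^{i+1}+2^{j+1}$ that could conceivably hit it is $a_{i+1}$ (taking $i>j$), whose minimal differential is computed directly to be $d_{2^{i+1}}(c\,a_{i+1})=-c^2 a_{i+1}^2$, a different class in too high a degree. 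Without some such argument --- a second, auxiliary coboundary computation plus a filtration/degree count --- your proof only shows that $\delta_{\G_m}(1+f)$ has the stated leading term, not that this term represents a nonvanishing differential on the page claimed. Supplying that verification would complete the argument along essentially the paper's lines.
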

\begin{proof}
The additive cohomology class $ca_i a_j = c[u]$ can be represented by the polynomial $u = c x^{2^i} y^{2^j}$.  The differentials in the tangent spectral sequence arise from applying the multiplicative coboundary map to bud polynomials, and so we can compute the smallest nonvanishing differential on $c [u]$ by computing \[\frac{1 + cu(x, y)}{1 + cu(w+x, y)} \cdot \frac{1 + cu(w, x+y)}{1 + cu(w, x)},\] provided that the result is not null-cohomologous.  In our case, we have
\begin{align*}
\frac{\left(1 + c x^{2^i} y^{2^j}\right)\left(1 + c w^{2^i}(x + y)^{2^j}\right)}{\left(1 + c (w + x)^{2^i} y^{2^j}\right)\left(1 + c w^{2^i} x^{2^j}\right)} & = \left(1 + k x^{2^i} y^{2^j}\right)\left(1 + k w^{2^i}(x + y)^{2^j}\right) \cdot \\
& \quad\cdot \left(1 - k (w + x)^{2^i} y^{2^j} + k^2 (w + x)^{2^{i+1}} y^{2^{j+1}} +  o(2^{i+1} + 2^{j+1})\right) \cdot \\
& \quad \cdot \left(1 - k w^{2^i} x^{2^j} + k^2 w^{2^{i+1}} x^{2^{j+1}} + o(2^{i+1} + 2^{j+1})\right) \\
& = 1 + k^2 w^{2^i} x^{2^i} y^{2^{j+1}} - k^2 w^{2^{i+1}} x^{2^j} y^{2^j} + o(2^{i+1} + 2^{j+1})).
\end{align*}
Hence, $d_{2^i + 2^j} (c a_i a_j) = c^2(a_i^2 a_{j+1} - a_{i+1} a_j^2)$ as an equation on the $E_{2^i + 2^j}$-page.

To show that this produces a nonvanishing differential, we must show that $a_i^2 a_{j+1} - a_{i+1} a_j^2$ still exists on this page.  The application of $\delta_2$ to $1 + u_+$ has leading additive part of degree divisible by $|u_+|$, hence the first nonvanishing differential with source $a_i^2 a_{j+1} - a_{i+1} a_j^2$ must be on the $E_{2^{i+1}+2^{j+1}}$-page.  To check that it is also not the target of a differential, classes of degree below $2^i + 2^j$ are too far away from $a_i^2 a_{j+1} - a_{i+1} a_j^2$ to hit it with a differential by the $E_{2^i+2^j}$-page.  There is only one class in $E_1^{1, t}$ for $2^i + 2^j \le t < 2^{i+1} + 2^{j+1}$: assuming $i > j$, it is $a_{i+1}$.  We calculate the minimal differential on $a_{i+1}$ similarly as
\begin{align*}
\frac{1+cx^{2^{i+1}}+cy^{2^{i+1}}}{(1+cx^{2^{i+1}})(1+cy^{2^{i+1}})} & = (1+cx^{2^{i+1}} + cy^{2^{i+1}}) \cdot (1 - cx^{2^{i+1}} + c^2 x^{2^{i+2}} - o(3 \cdot 2^{i+1})) \cdot \\
& \hspace{10.2em} \cdot (1 - cy^{2^{i+1}} + c^2y^{2^{i+2}} - o(3 \cdot 2^{i+1})) \\
& = 1 - c^2 x^{2^{i+1}} y^{2^{i+1}} + o(3 \cdot 2^{i+1}).
\end{align*}
Hence $d_{2^{i+1}}(c[a_{i+1}]) = -c^2[a_{i+1}^2]$, and the degree of this class surpasses that of $a_i a_j$.
\end{proof}
\begin{cor}
To extend $c a_i a_j$ to a multiplicative cocycle it is necessary that the coefficient $c$ satisfy $c^2 = 0$.
\end{cor}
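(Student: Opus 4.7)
The plan is to read off the corollary directly from the theorem just established. The tangent spectral sequence of \prettyref{sec:TangentSS} converges to $H^*(\G_a; \G_m)(\F_2)$, with $E_1$-page equal to $T_1 H^*(\G_a; \G_m)(\F_2) = H^*(\G_a; \G_a)(\F_2)$. A class such as $c a_i a_j$ on the $E_1$-page extends to an actual multiplicative cocycle only if it is an infinite cycle, and hence in particular only if the first potentially nonzero differential emanating from it vanishes on the appropriate page.

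First I would identify that first potentially nonzero differential. By the theorem, $d_{2^i+2^j}(c a_i a_j) = c^2(a_i^2 a_{j+1} - a_{i+1} a_j^2)$, so the survival of $c a_i a_j$ to the next page requires the equality $c^2(a_i^2 a_{j+1} - a_{i+1} a_j^2) = 0$ on the $E_{2^i+2^j}$-page. The substantive content of the corollary is therefore that the class $a_i^2 a_{j+1} - a_{i+1} a_j^2$ is nonzero on that page, so that the vanishing of the differential forces $c^2 = 0$ rather than being accomplished by some ambient relation.

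Fortunately, this nonvanishing has already been verified inside the proof of the theorem itself: the class $a_i^2 a_{j+1} - a_{i+1} a_j^2$ cannot yet have supported a differential (its minimal coboundary lies on the later $E_{2^{i+1}+2^{j+1}}$-page, since $\delta_2$ of $1+u_+$ has leading additive part of degree divisible by $|u_+|$), nor can it be the target of a differential from a lower-degree source (the only candidate in the relevant range is $a_{i+1}$, whose earliest differential lands on $-c^2 a_{i+1}^2$ in a different bidegree). Given this, the equation $c^2(a_i^2 a_{j+1} - a_{i+1} a_j^2) = 0$ reduces to $c^2 = 0$, which is precisely the claim. There is no real obstacle here; the corollary is a bookkeeping consequence of the preceding theorem and its nontriviality argument.
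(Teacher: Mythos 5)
Your proposal is correct and matches the paper's reasoning precisely: the corollary is presented as an immediate consequence of the preceding theorem, whose proof already establishes both that $d_{2^i+2^j}(c\,a_i a_j) = c^2(a_i^2 a_{j+1} - a_{i+1} a_j^2)$ and that the target class $a_i^2 a_{j+1} - a_{i+1} a_j^2$ survives to the $E_{2^i+2^j}$-page. You correctly identify that this nonvanishing is what forces $c^2 = 0$ rather than allowing the differential to die for ambient reasons, so nothing is missing.
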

\begin{figure}[htp]
\[
\begin{array}{r|cccccc}
8 & a_3 & a_2^2   & a_1^2 a_2        & a_1^4, a_0^2 a_1 a_2 & a_0^2 a_1^3, a_0^4 a_2 & a_0^4 a_1^2 \\
7 &     &         & a_0 a_1 a_2      & a_0^3 a_2, a_0 a_1^2 & a_0^3 a_1^2            & a_0^5 a_1 \\
6 &     & a_1 a_2 & a_0^2 a_2, a_1^3 & a_0^2 a_1^2          & a_0^4 a_1              & a_0^6 \\
5 &     & a_0 a_2 & a_0 a_1^2        & a_0^3 a_1            & a_0^5 \\
4 & a_2 & a_1^2   & a_0^2 a_1        & a_0^4 \\
3 &     & a_0 a_1 & a_0^3 \\
2 & a_1 & a_0^2 \\
1 & a_0 \\
\hline
E_1 & 1 & 2 & 3 & 4 & 5 & 6
\end{array}
\]
\caption{The $E_1$-page of the tangent spectral sequence over $R = \F_2$.}
\end{figure}

\begin{thm}
Set $R = \F_p$, $F = \G_a$, and $G = \G_m$, and select $u_+ = c a_i a_j$ for $i \ne j$ and a coefficient $c$.  Then \[d_{(p-1)(p^i+p^j)}(c a_i a_j) = c^p(a_{i+1} b_{j+1} - a_{j+1} b_{i+1}).\]
\end{thm}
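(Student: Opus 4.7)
The plan is to parallel the $p = 2$ argument, using the logarithm $\log : \G_m \isoto \G_a$ to intertwine $\delta_{\mathrm{mult}}$ with $\delta_{\mathrm{add}}$. Starting from the representative $cu_+ = c x^{p^i} y^{p^j}$ of polynomial degree $d := p^i + p^j$, I would expand, over $\Q$,
\[ \log F(c) \;:=\; \log \delta_{\mathrm{mult}}(1+cu_+) \;=\; \delta_{\mathrm{add}}\bigl(\log(1+cu_+)\bigr) \;=\; \sum_{n \ge 1}\tfrac{(-1)^{n-1}}{n}c^n s_n, \]
where $s_n := \delta_{\mathrm{add}}(u_+^n)$. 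For $1 \le n \le p-1$ the denominator $n$ is a unit in $\F_p$, so the truncated logarithm $\bar L := \sum_{n=2}^{p-1}\frac{(-1)^{n-1}}{n}c^n u_+^n$ makes sense in $\F_p$ and $\delta_{\mathrm{add}}\bar L$ matches $\log F(c)$ through order $c^{p-1}$. The pivotal computation is at $n = p$, where the expansion $(a+b)^{p^k} = a^{p^k} + b^{p^k} + p \cdot b_k(a,b)$ yields $s_p = p\tau$ on the nose, with
\[ \tau \;:=\; w^{p^{i+1}} b_{j+1}(x, y) \;-\; y^{p^{j+1}} b_{i+1}(w, x), \]
an integrally-defined 3-cochain of polynomial degree $pd$.

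Next, I would modify the representative to $\tilde f := (1+cu_+)\exp(-\bar L)$, a $\G_m$-valued 2-cochain in $\F_p$. Since $\bar L$ has polynomial degree $\ge 2d$, the difference $\tilde f - 1$ retains $cu_+$ as its leading-degree part, so $\tilde f$ represents $c a_i a_j$ in the tangent spectral sequence. By the multiplicativity of $\delta_{\mathrm{mult}}$ together with the identity $\delta_{\mathrm{mult}}\exp(v) = \exp(\delta_{\mathrm{add}} v)$,
\[ \delta_{\mathrm{mult}}(\tilde f) \;=\; \exp\!\bigl(\log F(c) - \delta_{\mathrm{add}}\bar L\bigr) \;=\; 1 \,+\, c^p\tau \,+\, O\bigl(\text{polynomial degree}>pd\bigr), \]
since the $c^n$ contributions for $2 \le n \le p-1$ cancel against $\delta_{\mathrm{add}} \bar L$, the $c^p$ contribution is precisely $\tau$, and every later contribution has polynomial degree $\ge (p+1)d$. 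Thus $\tilde f$ is a multiplicative cocycle modulo filtration $pd$, survives to the $E_{(p-1)d}$-page of the tangent spectral sequence, and $d_{(p-1)d}(c a_i a_j) = c^p \tau$ there.

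To finish, I would identify $\tau$ as a cohomology class: its two summands are the cup products $a_{i+1}(w) b_{j+1}(x,y) = (a_{i+1} \smile b_{j+1})(w,x,y)$ and $b_{i+1}(w,x) a_{j+1}(y) = (b_{i+1} \smile a_{j+1})(w,x,y)$. Graded commutativity between a $1$-cochain and a $2$-cochain gives $b_{i+1} \smile a_{j+1} \equiv a_{j+1} \smile b_{i+1}$ modulo coboundaries, identifying $\tau$ with $a_{i+1}b_{j+1} - a_{j+1}b_{i+1}$ in $H^3(\G_a; \G_a)(\F_p)$. Just as in the $p = 2$ case, a short sparsity argument on the $E_1$-page then confirms that this target is not the image of any lower differential, so the claimed equality holds as stated.

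The principal difficulty is arranging the log--exp calculation cleanly in positive characteristic, since the pole $1/p$ would naively wreck the $c^p$ term. The factorization $s_p = p\tau$ rescues the situation, but forces the truncation $\bar L$ to stop at $n = p-1$: one cannot form $\log(1+cu_+)$ in $\F_p$ past order $c^{p-1}$, and fortunately that is exactly the range over which the truncated log eliminates all intermediate obstructions, leaving $\tau$ as the first genuine cohomological contribution.
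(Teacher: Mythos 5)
Your argument is correct and is essentially the same as the paper's, which also modifies the naive representative $1 + cu_+$ by a $p$-integral truncation (there via the truncated exponential $\operatorname{texp}(cu_+)=\sum_{n=0}^{p-1}(cu_+)^n/n!$ rather than your $(1+cu_+)\exp(-\bar L)$) so that the multiplicative coboundary vanishes through order $c^{p-1}$ and its $c^p$ coefficient reduces to the integral class $\tau = w^{p^{i+1}}\fc{p^{j+1}}{2}(x,y) - \fc{p^{i+1}}{2}(w,x)\,y^{p^{j+1}}$. Your isolation of the identity $s_p = \delta_{\mathrm{add}}(u_+^p) = p\tau$ is a clean way to organize the ``brutal expansion'' the paper defers.
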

\begin{proof}[Proof] 
This is not relevant to our main result, so we will only give a sketch.  Use the truncated exponential function to build a representing polynomial \[\operatorname{texp}(c x^{p^i} y^{p^j}) = \sum_{n=0}^{p-1} \frac{(c x^{p^i} y^{p^j})^n}{n!}\] for the cohomology class $c a_i a_j$.  Then, there is an equality $\operatorname{texp}(z)^{-1} \equiv \operatorname{texp}(-z) \mod{z^{p+1}}$, and so to calculate the $c^p$ coefficient of $\delta_2 \operatorname{texp}(c x^{p^i} y^{p^j})$ we may calculate the $c^p$ coefficient of the product \[\operatorname{texp}(c x^{p^i} y^{p^j}) \cdot \operatorname{texp}(-c (w+x)^{p^i} y^{p^j}) \cdot \operatorname{texp}(c w^{p^i} (x+y)^{p^j}) \cdot \operatorname{texp}(-c w^{p^i} x^{p^j}).\]  Brutal expansion shows that this is $w^{p^{i+1}} \fc{p^{j+1}}{2} - \fc{p^{i+1}}{2} y^{p^{j+1}}$.
\end{proof}
\begin{figure}[htc]
\[
\begin{array}{r|cccccc}
9 & a_2 & b_2     &                  &             & a_1 b_1^2                & b_1^3 \\
8 &    &          &                  &             &                          & a_0 a_1 b_0 b_1 \\
7 &     &         &                  & a_0 a_1 b_1 & a_1 b_0 b_1, a_0 b_1^2   & b_0 b_1^2 \\
6 &     &         & a_1 b_1          & b_1^2       &                          & a_0 a_1 b_0^2 \\
5 &     &         &                  & a_0 a_1 b_0 & a_0 b_0 b_1              & b_0^2 b_1 \\
4 &     & a_0 a_1 & a_1 b_0, a_0 b_1 & b_0 b_1     \\
3 & a_1 & b_1     &                  &             & a_0 b_0^2                & b_0^3 \\
2 &     &         & a_0 b_0          & b_0^2       \\
1 & a_0 & b_0     \\
\hline
E_1 & 1 & 2 & 3 & 4 & 5 & 6
\end{array}
\]
\caption{The $E_1$-page of the tangent spectral sequence over $R = \F_3$.}
\end{figure}

\begin{cor}\label{cor:TangentSSUpshot}
A modular additive cocycle in characteristic $p$ represented by the cohomology class $c[a_i a_j]$ will not occur as a leading summand of a multiplicative cocycle unless $c^p = 0$.
\end{cor}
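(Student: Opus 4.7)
The plan is to interpret the nonvanishing differentials computed in the two preceding theorems as an obstruction in the tangent spectral sequence. By convergence of the spectral sequence $T_1 H^*(F;G) \Rightarrow H^*(F;G)$, a class on the $E_1$-page appears as the leading summand of a genuine multiplicative cocycle if and only if it is a permanent cycle, i.e., every $d_r$ vanishes on it. For the class $c[a_i a_j]$, the first potentially nonzero differential was computed explicitly above: at $p=2$, it lands on the $E_{2^i+2^j}$-page and equals $c^2(a_i^2 a_{j+1} - a_{i+1} a_j^2)$; at odd primes $p$, it lands on the $E_{(p-1)(p^i+p^j)}$-page and equals $c^p(a_{i+1}b_{j+1} - a_{j+1}b_{i+1})$. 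In both cases the target is $c^p$ times an explicit element of $H^*(\G_a;\G_a)(\F_p)$.

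Next I would confirm that the target class genuinely persists to the page on which the differential lives. Each is manifestly nonzero in $E_1$ by the tensor product description of \prettyref{thm:HGaGa}. To rule out its being in the image of an earlier differential, one enumerates the finitely many potential source classes of the appropriate filtration degree and bounds below the length of differentials leaving them; this is already performed at $p=2$ in the preceding theorem's proof via the side calculation that $d_{2^{i+1}}(a_{i+1}) = -a_{i+1}^2$ lies far from the target. The analogous check at odd primes must account for both the $a_k$ and $b_k$ generators, but the same degree-counting argument shows that all candidate sources produce differentials that fall short of the required length to hit the target before page $(p-1)(p^i+p^j)$.

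With both ingredients established, the conclusion is immediate by contrapositive: if $c^p \neq 0$, the differential on $c[a_i a_j]$ is a nonzero element on the $E_r$-page it inhabits, so $c[a_i a_j]$ fails to survive to $E_\infty$ and therefore cannot occur as the leading summand of any multiplicative cocycle. The main obstacle is the page-by-page bookkeeping that rules out earlier differentials into the target, but this has essentially been dispatched by the arguments in the two preceding theorems, so the corollary follows with no substantial new work.
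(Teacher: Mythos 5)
Your proposal is correct and follows the paper's own route: the corollary is deduced immediately from the two preceding theorems computing $d_{2^i+2^j}(c\,a_ia_j)$ at $p=2$ and $d_{(p-1)(p^i+p^j)}(c\,a_ia_j)$ at odd $p$, with the nonvanishing of the target on the relevant page handled exactly by the degree-counting side argument you cite (which the paper carries out fully only at $p=2$, leaving the odd-prime case at the same level of sketch as you do). No genuinely different ideas are introduced, and no gap beyond what the paper itself leaves informal.
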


\begin{rem}
Note that these results obstruct particular asymmetric cohomology classes, and that the symmetric cocycles $\fc{n}{2}$ do not support these differentials.  For example, the cohomology class $[\fc{p^i+p^j}{2}]$ is represented as $a_ia_j + a_ja_i = a_ia_j - a_ia_j = 0$, which has no obstruction.
\end{rem}

\section{Half-Weil forms}

\begin{defn}
Fix a group scheme $G$ and an integer $n$.  For any $1 \le i \le n$, define $p_i$ to be the map $G^n \to G^n$ defined by $1 \times \cdots \times 1 \times p \times 1 \times \cdots \times 1$, with the $p$ occuring in the $i$th position.
\end{defn}

\begin{thm}
Select a trivialized multiextension $\torsor L$ with controlling cocycles $u_1, \ldots, u_n$.  There is a diagram
\begin{center}
\begin{tikzpicture}[
        dash line/.style={densely dotted},
        normal line/.style={-stealth},
    ]
    \matrix (m) [matrix of math nodes, 
         row sep=4em, column sep=4em,
         text height=1.5ex, 
         text depth=0.25ex]{
p_* \torsor{L} & \torsor{L}^{\otimes p} & p_i^* \torsor{L} \\
\torsor{L} & & \torsor{L}, \\
    };
    \path[normal line]
        (m-2-1) edge node[left]{$p_*$} (m-1-1)
                edge node[above]{$\otimes p$} (m-1-2)
                edge node[above]{$p \cdot -$} (m-2-3)
        (m-1-3) edge node[right]{$p_i^*$} (m-2-3)
        (m-1-2) edge [dash line] node[above]{$\beta_i$} (m-1-3)
                edge node[above]{\hspace{2em} $\mu_i^{\circ(p-1)}$} (m-2-3)
        (m-1-1) edge [dash line] node[above]{$\alpha$} (m-1-2);
\end{tikzpicture}
\end{center}
factoring the multiplication-by-$p$ map on $\torsor L$ so that the composition of the top row is an isomorphism of torsors.
\end{thm}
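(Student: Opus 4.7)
The plan is to construct the dashed arrows $\alpha$ and $\beta_i$ explicitly, verify commutativity of the diagram, and show that the composition $\beta_i \circ \alpha$ is an isomorphism of torsors. This is essentially a torsor-theoretic incarnation of the classical fact that, for a group extension, the multiplication-by-$p$ map factors through both a $[p]$-pushforward on the structure group and a $[p]$-pullback on the base, producing a Weil-pairing-like isomorphism between the two. Accordingly, two distinct sources of structure are in play: the abelian group structure of the structure group $G$ of $\torsor L$ produces $\alpha$, while the multiextension structure of $\torsor L$ in the $i$-th coordinate produces $\beta_i$.

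For $\alpha \colon p_* \torsor L \to \torsor L^{\otimes p}$, I would appeal to the fact that, when the structure group is abelian, both sides realize the same operation of scaling the structure action by $[p]$. Representing $p_* \torsor L$ as the Borel quotient from \prettyref{def:TorsorOps} and $\torsor L^{\otimes p}$ as $\Delta^* \mu_*(\torsor L \times \cdots \times \torsor L)$, one writes down the natural comparison sending a representative $[l, g]$ to $l^{\otimes p} \cdot g$; checking well-definedness on the Borel equivalence relation is immediate from the definition of the tensor product of torsors, and produces a torsor isomorphism making the left triangle commute.

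For $\beta_i \colon \torsor L^{\otimes p} \to p_i^* \torsor L$, I would iterate the $i$-th extension structure of the multiextension, controlled by the cocycle $u_i$. This structure provides a multiplication isomorphism
\[
\mu_i \colon \torsor L_{\mathbf{x}} \otimes \torsor L_{\mathbf{y}} \xrightarrow{\cong} \torsor L_{\mathbf{x} +_i \mathbf{y}}
\]
for $\mathbf{x}, \mathbf{y}$ agreeing outside the $i$-th slot. Iterating $p-1$ times on the $p$-fold tensor of $\torsor L_{\mathbf{x}}$ with itself produces a map landing in $\torsor L_{(x_1, \ldots, p x_i, \ldots, x_n)} = (p_i^* \torsor L)_{\mathbf{x}}$, which globalizes to $\beta_i$. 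By construction the right triangle commutes with $\mu_i^{\circ(p-1)}$.

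The remaining task is to show that $\beta_i \circ \alpha$ is an isomorphism of $G$-torsors over $G^n$ and that the outer square recovers multiplication-by-$p$. Both $p_* \torsor L$ and $p_i^* \torsor L$ carry the same classifying data, essentially because iterated application of the $2$-cocycle identity for $u_i$ rewrites $u_i(p x_i, y)$ as a sum of $p-1$ values of the form $u_i(j x_i, x_i)$, matching the cocycle $p u_i$ that classifies the pushforward; the explicit map $\beta_i \circ \alpha$ realizes exactly this cocycle-level agreement. The main obstacle is the bookkeeping of this combinatorial identity and of the interaction between the abelian structure of $G$ (governing $\alpha$) and the multiextension structure on the base (governing $\beta_i$). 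Once the cocycle identity is unwound, commutativity of the outer square reduces to tracking the canonical section of the trivialized multiextension through both routes of the diagram, and the theorem follows.
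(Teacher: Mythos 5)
Your construction is correct and the two dashed maps you build are the ones the paper intends: $\alpha$ compares the pushforward along $p$ on the structure group with the $p$-th tensor power, and $\beta_i$ is the $(p-1)$-fold iterate of the partial multiplication $\mu_i$ landing in the fibers of $p_i^* \torsor{L}$. Where you diverge is in how the isomorphism is certified. The paper's proof is entirely computational: using the given trivialization it writes every solid arrow as an explicit formula on points $g \times (x_1, \ldots, x_n)$ of $G \times G^n$, solves for $\alpha$ (which becomes the identity-type formula) and for $\beta_i$ (which becomes the twist $g \mapsto g \prod_{j=1}^{p-1} u_i(x_1, \ldots, \widehat{x_i}, \ldots, x_n)(x_i, j x_i)$), and then simply observes that twisting by an element of the group $G$ is invertible. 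Your route instead argues that $p_* \torsor{L}$ and $p_i^* \torsor{L}$ ``carry the same classifying data.'' Two cautions there: first, the cocycles do not literally match --- $p_i^* u$ and $u^p$ agree only up to a coboundary, and that coboundary is exactly the half-Weil form extracted in the following theorem, so asserting equality elides the very object this section is building toward; second, the cocycle comparison is not actually needed for the statement at hand, since $\alpha$ and each application of $\mu_i$ are $G$-equivariant maps of $G$-torsors covering the identity of the base, hence isomorphisms, so $\beta_i \circ \alpha$ is one as well --- a cleaner finish than the bookkeeping you defer. Finally, note that the explicit formula for $\beta_i$ as a product of cocycle values is precisely what the half-Weil computation in \prettyref{def:HalfWeilPairing} and the theorem after it consume, which is why the paper insists on the trivialized, formula-level argument; your abstract version proves the stated theorem, but that unwinding would still have to be carried out before the result can be used downstream.
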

\begin{proof}
Given the trivialization of $\torsor L$, this proof is completely computational.  The map $\otimes p$ is described by the formula \[\otimes p: \left( g \times (x_1, \ldots, x_n) \right) \mapsto \left(g \times (x_1, \ldots, x_n)\right)^{\otimes p} = \left( g^p \times (x_1, \ldots, x_n) \right).\]  Then, the map $p_*$ acts as \[p_*: (g \times (x_1, \ldots, x_n)) \mapsto (g^p \times (x_1, \ldots, x_n)),\] which determines the map $\alpha$ to be \[\alpha: (g \times (x_1, \ldots, x_n)) \mapsto (g \times (x_1, \ldots, x_n)).\]  Hence, $\alpha$ is an isomorphism.

We perform the same analysis on $\beta_i$.  The map $p_i^*$ acts by \[p_i^*: (g \times (x_1, \ldots, x_n)) \mapsto g \times (x_1, \ldots, x_{i-1}, px_i, x_{i+1}, \ldots, x_n).\]  Then, the map $\mu_i^{\circ (p-1)}$ acts by iterated multiextension addition in the $i$th factor, which gives the formula \[\mu_i^{\circ (p-1)}: (g \times (x_1, \ldots, x_n)) \mapsto \left( g \prod_{i=1}^{p-1} u_i(x_1, \ldots, \widehat{x_i}, \ldots, x_n)(x_i, ix_i), (x_1, \ldots, px_i, \ldots, x_n)\right).\]  Hence, the map $\beta_i$ is determined to be \[\beta_i: (g \times (x_1, \ldots, x_n)) \mapsto \left( g \prod_{i=1}^{p-1} u_i(x_1, \ldots, \widehat{x_i}, \ldots, x_n)(x_i, ix_i), (x_1, \ldots, x_n) \right).\]  Because the twist in the $G$-factor is invertible, as $G$ is a group, $\beta_i$ is also an isomorphism.
\end{proof}

\begin{defn}\label{def:HalfWeilPairing}
Given a $k$-variate multiplicative $2$-cocycle $u \in C^k(\G_a; \G_m)$, we define the associated ``half-Weil form'' \[e = \prod_{i=1}^{p-1} u(i x_1, x_1, x_2, \ldots, x_{k-1}).\]
\end{defn}

\begin{thm}
The half-Weil form $e$ associated to such a $u \in C^k(\G_a; \G_m)(R)$ for $R$ an $\F_p$-algebra is a (not necessarily symmetric) $(k-1)$-variate multiplicative $2$-cocycle satisfying \[\delta_1 e = u^p.\]
\end{thm}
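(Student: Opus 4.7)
The plan is to interpret $u$ fiberwise as a symmetric 2-cocycle defining an abelian group extension $E$ of $\G_a$ by $\G_m$ and then to exploit the homomorphism property of the $[p]$-map on $E$; in characteristic $p$ this map lands in the fiber $\G_m$, where it is measured exactly by the half-Weil pairing.

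First I would fix the ``passenger'' coordinates $\vec{z} = (x_2, \ldots, x_{k-1})$ and set $f(a, b) = u(a, b, \vec{z})$. Since $u \in C^k(\G_a; \G_m)$ is symmetric and satisfies the 2-cocycle condition in its first two slots, $f$ is a symmetric 2-cocycle $\G_a \times \G_a \to \G_m$, and so it presents an abelian formal group $E = \G_m \times \G_a$ with twisted product $(g, x) \cdot (h, y) = (gh \cdot f(x, y),\, x + y)$: the 2-cocycle condition supplies associativity, the symmetry supplies commutativity, and $E$ realizes the extension of $\G_a$ by $\G_m$ classified by $[f]$.

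Next, a direct induction using the twisted group law gives
\[
[n](g, x) = \left(g^n \prod_{j=1}^{n-1} f(jx, x),\ nx\right)
\]
for every $n \ge 1$. Since $R$ is an $\F_p$-algebra and the base is $F = \G_a$, we have $px = 0$; setting $n = p$ therefore yields $[p](g, x) = (g^p \cdot e(x, \vec{z}),\, 0)$, and the ``twist'' is exactly the half-Weil form of \prettyref{def:HalfWeilPairing}. Because $E$ is abelian, $[p]$ is a group homomorphism. Now I would evaluate $[p]$ on the product $(1, y_0) \cdot (1, y_1) = (f(y_0, y_1),\, y_0 + y_1)$ in two ways: directly this is $(f(y_0, y_1)^p \cdot e(y_0 + y_1, \vec{z}),\, 0)$, whereas the homomorphism property gives $[p](1, y_0) \cdot [p](1, y_1) = (e(y_0, \vec{z}) \cdot e(y_1, \vec{z}),\, 0)$ after using $f(0, 0) = 1$ for a normalized cocycle. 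Equating the $\G_m$-components and restoring passenger dependence yields
\[
\frac{e(y_0, \vec{z}) \cdot e(y_1, \vec{z})}{e(y_0 + y_1, \vec{z})} = u(y_0, y_1, \vec{z})^p,
\]
which is precisely $\delta_1 e = u^p$.

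Finally, to check that $e$ is a multiplicative 2-cocycle in each passenger slot $x_i$ with $i \ge 2$, note that by the symmetry of $u$ each factor $u(jx_1, x_1, \ldots, x_i, \ldots, x_{k-1})$ is itself a 2-cocycle in $x_i$, and $\G_m$-valued products of 2-cocycles in the same slot remain 2-cocycles there. The main conceptual step is the observation that the half-Weil pairing is literally the ``fiber part'' of the $[p]$-map on the extension $E$; once that is in place the identity $\delta_1 e = u^p$ is almost free from the homomorphism property, and the principal bookkeeping obstacle is only to read $u^p$ as the actual $p$th power in $\G_m$ (equivalently, the Frobenius on the power series $u$) rather than as a scalar multiplication by $p$.
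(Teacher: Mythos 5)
Your derivation of the displayed identity is correct, and it is really the paper's own mechanism made concrete: the twisted product on $E$ is the slot-one extension structure carried by the trivialized torsor $\torsor B$ associated to $u$, your formula $[p](g,x) = \left(g^p \prod_{j=1}^{p-1} f(jx,x),\, px\right)$ is exactly the composite $\beta_1\alpha$ from the factorization of multiplication by $p$, and the homomorphism property of $[p]$ in the commutative group $E$ is what the paper invokes as ``$\beta\alpha$ is a map of multiextensions.''  In fact your computation treats precisely the slot (the doubled variable) that the paper's diagram chase, as literally written with $\mu_j$ for $j$ different from the distinguished slot, does not cover, so for the relation $\delta_1 e = u^p$ taken in the first variable --- and in particular in the case $k=2$, where there are no passenger slots at all --- your argument is the right one and is sound: the induction formula for $[n]$, the normalization $f(0,0)=1$, and the use of $px=0$ are all fine.

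The gap is in the assertion that $e$ is a $(k-1)$-variate multiplicative $2$-cocycle.  In the paper's sense this means the $2$-cocycle condition in the \emph{first two} slots of $e$, i.e.\ in the mixed pair $(x_1,x_2)$ consisting of the doubled variable and the first passenger; this is also the form in which the theorem is used later, since \prettyref{thm:WeilObstructions} regards $[e_+]$ as a class in $H^2(\G_a;\G_a)(S[x_3,\ldots,x_{k-1}])$ with $(x_1,x_2)$ as the active variables.  Your closing argument only checks pairs of passenger slots, where ``each factor satisfies the condition and products of such functions still do'' works because $(jx_1,x_1)$ sits inertly in the first two slots of $u$; it says nothing about pairs involving $x_1$, where the dependence is through the diagonal substitution.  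Nor does the mixed condition follow formally from what you proved: granting your identity $e(y_0,\vec z)e(y_1,\vec z) = u(y_0,y_1,\vec z)^p\, e(y_0+y_1,\vec z)$, the cocycle condition for $e$ in $(x_1,x_2)$ is equivalent to the companion identity
\[
e(x_1,x_2,\vec w)\, e(x_1,x_2',\vec w) \;=\; u(x_2,x_2',x_1,\vec w)^p\; e(x_1,x_2+x_2',\vec w),
\]
a coboundary statement in the \emph{second} slot of $e$, which is a genuinely separate claim.  That claim is exactly what the paper's point chase with $\mu_j$, $j$ a passenger slot, produces, using that $\beta\alpha$ commutes with the partial multiplication in every slot of the multiextension; at your level of explicitness it requires its own computation with the cocycle condition of $u$ in a mixed pair of slots (equivalently, the interchange compatibility of the two partial group laws on the associated biextension), and your proposal neither proves it nor flags that it is needed.
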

\begin{proof}
As $p = 0$ in $R$, the rigidified higher cubical structure $\torsor B$ associated to $u$ has trivial pullback $p_i^* \torsor B$, since the cocycles associated to $p_i^* \torsor B$ are of the two forms \[
u_j(x_1, \ldots, x_{i-1}, 0, x_{i+1}, \ldots, \widehat{x_j}, \ldots, x_n)(x_j, x_j') = 1,\] \[u_i(x_1, \ldots, \widehat{x_i}, \ldots, x_n)(0, 0) = 1.\]  The isomorphism $p_i^* \torsor B \cong p_* \torsor B$ can be reinterpreted as a trivialization of $p_* \torsor B / p_i^* \torsor B$, i.e., a $1$-cocycle whose image under $\delta_1$ is the $2$-cocycle associated to $p_* \torsor B / p_i^* \torsor B$ --- but, since the $2$-cocycle associated to $p_i^* \torsor B$ is $1$, we're really trivializing $p_* \torsor B$, which has associated $2$-cocycle $\left(u(x_1, \ldots, x_n)\right)^p$.

We can produce an explicit formula for this $1$-cocycle by chasing points around the diagram
\begin{center}
\begin{tikzpicture}[
        dash line/.style={densely dotted},
        normal line/.style={-stealth},
    ]
    \matrix (m) [matrix of math nodes, 
         row sep=2em, column sep=2em,
         text height=1.5ex, 
         text depth=0.25ex]{
p_* \torsor B \otimes p_* \torsor B & p_* \torsor B \\
p_i^* \torsor B \otimes p_i^* \torsor B & p_i^* \torsor B. \\
    };
    \path[normal line]
        (m-1-1) edge node[above]{$\mu_j$} (m-1-2)
                edge node[left]{$\beta\alpha \otimes \beta\alpha$} (m-2-1)
        (m-1-2) edge node[right]{$\beta\alpha$} (m-2-2)
        (m-2-1) edge node[above]{$\mu_j$} (m-2-2);
\end{tikzpicture}
\end{center}
We make the following two computations, writing $\mathbf{x} = (x_1, \ldots, x_j, \ldots, x_n)$ and $\mathbf{x}' = (x_1, \ldots, x_j', \ldots, x_n)$:
\begin{align*}
& \beta\alpha \circ \mu_j (g \times \mathbf{x}) \otimes (g' \times \mathbf{x}') = \\
& = \beta\alpha (g g' u(x_1, \ldots, \widehat{x_j}, \ldots, x_n)(x_j, x_j')^p) \times (x_1, \ldots, x_j + x_j', \ldots, x_n) \\
& = (g g' u(x_1, \ldots, \widehat{x_j}, \ldots, x_n)(x_j, x_j')^p e(x_1, \ldots, x_j + x_j', \ldots, x_n)) \times (x_1, \ldots, x_j + x_j', \ldots, x_n), \\ \\
& \mu_j (\beta\alpha \otimes \beta\alpha) (g \times \mathbf{x}) \otimes (g' \times \mathbf{x}') = \\
& = \mu_j (g e(\mathbf{x}) \times \mathbf{x}) \otimes (g' e(\mathbf{x}') \times \mathbf{x}') \\
& = (g g' u(x_1, \ldots, px_i, \ldots, \widehat{x_j}, \ldots, x_n)(x_j, x_j') e(\mathbf{x}) e(\mathbf{x}')) \times (x_1, \ldots, x_j + x_j', \ldots, x_n).
\end{align*}
Because $\beta\alpha$ is a map of multiextensions, the $G$-coordinates of these expressions must be equal, and hence \[u(x_1, \ldots, \widehat{x_j}, \ldots, x_n)(x_j, x_j')^p = \frac{e(x_1, \ldots, x_j, \ldots, x_n) e(x_1, \ldots, x_j', \ldots, x_n)}{e(x_1, \ldots, x_j + x_j', \ldots, x_n)} = \delta_1 e(x_1, \ldots, x_n). \qedhere\]
\end{proof}

\begin{rem}
The classical Weil pairing associated to a cubical structure arises as the composite isomorphism \[(p \times 1)^* \torsor L \xrightarrow{\alpha^{-1} \beta_1^{-1}} p_* \torsor L \xrightarrow{\beta_2 \alpha} (1 \times p)^* \torsor L,\] which in the fiber over a point $(x_1, x_2) \in \G_a^2$ acts by multiplication by \[\prod_{i=1}^{p-1} \frac{u(x_1, ix_1, x_2)}{u(x_1, ix_2, x_2)},\] where $u$ is the $2$-cocycle associated to $\torsor L$.  This was the object used by Mumford~\cite{Mumford} in his comparison of Weil pairings and cubical structures.
\end{rem}

\begin{lem}\label{lem:AdditiveWeilForm}
There is an additive version of the half-Weil form.  To a multiplicative $2$-cocycle $u$ with additive part $u_+$, we associate an additive half-Weil form $e_+$, which is determined by \[e_+ = \sum_{i=1}^{p-1} u_+(ix_1, x_1, x_2, \ldots, x_{k-1})\] when this sum is nonzero.  Again, $\delta_1 e_+ = p u_+ \equiv 0$.
\end{lem}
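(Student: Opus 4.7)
The plan is to deduce this additive statement by linearizing the multiplicative half-Weil theorem proved just above. Since the tangent space calculation identified $T_1 C^k(\G_a; \G_m) \cong C^k(\G_a; \G_a)$, every additive $k$-variate cocycle $u_+$ over $R$ is represented by a multiplicative cocycle $u = 1 + \epsilon u_+ \in C^k(\G_a; \G_m)(R[\epsilon]/\epsilon^2)$. Whenever $R$ is an $\F_p$-algebra, so is $R[\epsilon]/\epsilon^2$, and the preceding multiplicative half-Weil theorem applies, producing a $(k-1)$-variate multiplicative cocycle $e$ satisfying $\delta_1 e = u^p$.

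Next I would expand both sides of this identity modulo $\epsilon^2$. On the left, since $\epsilon^2 = 0$,
\[
e = \prod_{i=1}^{p-1} \bigl(1 + \epsilon u_+(ix_1, x_1, x_2, \ldots, x_{k-1})\bigr) = 1 + \epsilon \sum_{i=1}^{p-1} u_+(ix_1, x_1, x_2, \ldots, x_{k-1}),
\]
which both exhibits $1 + \epsilon e_+$ as the infinitesimal part of the multiplicative half-Weil form and justifies the explicit formula for $e_+$ in the lemma statement. On the right, $u^p = (1 + \epsilon u_+)^p = 1 + p\epsilon u_+$ again because $\epsilon^2 = 0$. Matching the $\epsilon$-linear parts of the equation $\delta_1 e = u^p$ then yields the identity $\delta_1 e_+ = p u_+$, which vanishes in any $\F_p$-algebra, giving the claim.

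The only subtle point, and really the only step that is not bookkeeping, is accounting for the parenthetical clause \emph{when this sum is nonzero}: for certain $u_+$ (for example, when $p = 2$ and $u_+$ is symmetric) the sum defining $e_+$ can degenerate to zero even though $u_+$ itself is nontrivial, and in that case $e_+ := 0$ trivially satisfies $\delta_1 e_+ = 0 = p u_+$. A brief sanity check is warranted that this degenerate case is genuinely compatible with the linearized half-Weil construction, but no further computation is required beyond the expansion above; everything else is simply the first-order shadow of the multiplicative proof already given.
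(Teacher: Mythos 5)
Your derivation of the displayed identity is correct, but it takes a genuinely different route from the paper. The paper's proof re-runs the torsor-theoretic construction of the half-Weil form with structure group $\G_a$ in place of $\G_m$, whereas you obtain the additive statement by base change to the dual numbers: regarding $u_+ \in C^k(\G_a;\G_a)(R) \cong T_1 C^k(\G_a;\G_m)(R)$ as the multiplicative cocycle $1+\epsilon u_+$ over $R[\epsilon]/\epsilon^2$ and extracting the $\epsilon$-linear part of $\delta_1 e = u^p$. This is exactly the mechanism already used in \prettyref{lem:T1GCommute}, so it is legitimate and in some ways cleaner: it avoids repeating the diagram chase and makes $\delta_1 e_+ = p u_+ \equiv 0$ a purely formal consequence of the multiplicative theorem.

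What the linearization by itself does not address is the second clause of the paper's proof. The lemma is stated for an honest multiplicative cocycle $u$ whose additive part $u_+$ is its leading term in the degree filtration of \prettyref{sec:TangentSS}, and the content invoked later (in \prettyref{thm:WeilObstructions}) is that the genuine half-Weil form $e = \prod_{i=1}^{p-1} u(ix_1, x_1, \ldots, x_{k-1})$ is a multiplicative \emph{extension} of $e_+$, i.e.\ that $e_+$ is the leading additive part of $e$. Your cocycle $1+\epsilon u_+$ is not the given $u$; for $u = 1 + u_+ + o(|u_+|)$ one must note that the product expands as $1 + \sum_{i=1}^{p-1} u_+(ix_1, x_1, \ldots, x_{k-1}) + o(|u_+|)$, the cross terms lying in filtration at least $2|u_+|$ --- this is the filtration remark the paper alludes to, and it is also the real force of the clause ``when this sum is nonzero'': if the sum vanishes, the formula simply fails to determine the additive part of $e$ (which then sits in higher degree), rather than forcing $e_+ = 0$ as you propose. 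The fix is one line of bookkeeping with the filtration, but it should be stated, since it is precisely this point that feeds the obstruction argument downstream.
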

\begin{proof}
Reuse the above argument for multiextensions with structure group $\G_a$ rather than $\G_m$ to produce an additive notion of Weil pairing.  The statement about the interaction of $u_+$ and $e_+$ and $u$ and $e$ stems from studying the filtration on the tangent space used in \prettyref{sec:TangentSS}.
\end{proof}

\begin{lem}\label{lem:WeilCalculation}
The sum given above determining the additive half-Weil form associated to $\fc{n}{2}$ is $-\fc{n}{1}$ when $n = p^i$ and $0$ otherwise.
\end{lem}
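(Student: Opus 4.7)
The plan is a direct computation with the explicit formula for $\fc{n}{2}$. From \prettyref{def:zeta}, for $k = 2$ we have $\fc{n}{2}(x,y) = \phi(n,2)^{-1}\bigl((x+y)^n - x^n - y^n\bigr)$, so specialising to $(ix_1, x_1)$ gives
\[
\fc{n}{2}(ix_1, x_1) \;=\; \phi(n,2)^{-1}\bigl((i+1)^n - i^n - 1\bigr)\, x_1^n.
\]
Summing over $i = 1, \ldots, p-1$, the middle terms telescope: $\sum_{i=1}^{p-1}\bigl((i+1)^n - i^n\bigr) = p^n - 1$, while $\sum_{i=1}^{p-1} 1 = p - 1$. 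Hence
\[
e_+(x_1) \;=\; \phi(n,2)^{-1}\,(p^n - p)\, x_1^n.
\]

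It then suffices to analyse $\phi(n,2)$. By a classical consequence of Kummer's theorem, $\phi(n,2) = \gcd_{1 \le i \le n-1}\binom{n}{i}$ equals $p$ when $n$ is a positive power of $p$, and is a unit modulo $p$ (namely $1$, or a power of some other prime $q$) otherwise. In the first case, writing $n = p^i$ with $i \ge 1$, one obtains $e_+ = (p^{\,n-1} - 1)\, x_1^n$, whose reduction modulo $p$ is $-x_1^{p^i}$. Since $\phi(p^i, 1) = 1$, this is precisely $-\fc{p^i}{1}$. In all other cases, $\phi(n,2)$ is invertible mod $p$ while $p^n - p \equiv 0 \pmod{p}$, so the coefficient vanishes and $e_+ \equiv 0$.

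The only real obstacle is checking the integrality of the intermediate expression $\phi(n,2)^{-1}(p^n - p)$: when $n = p^i$ this reduces to $p \mid p^n - p$, which is immediate for $i \ge 1$, and when $n$ is a power of another prime $q$, to $q \mid p^n - p$, which follows from Fermat's little theorem applied iteratively. Both facts are built into the integrality of $\fc{n}{2}$ itself, so no separate verification is required; everything else is bookkeeping.
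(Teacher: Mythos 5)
Your proof is correct and takes essentially the same route as the paper: telescope $\sum_{i=1}^{p-1}\fc{n}{2}(ix_1,x_1)$ to obtain $\phi(n,2)^{-1}(p^n-p)\,x_1^n$, then reduce modulo $p$ according to whether $\phi(n,2)$ is $p$ or a unit mod $p$. If anything you are slightly more careful than the paper in the case $n = q^a$ for a prime $q \ne p$, where $\phi(n,2) = q$ rather than $1$ as the paper's proof implicitly asserts; the conclusion is unaffected since $q$ is invertible mod $p$ and $p \mid p^n - p$.
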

\begin{proof}
Setting $u_+ = (x+y)^n - x^n - y^n$ as an additive $2$-cocycle over $\Z$, we telescope and calculate \[e_+ = \sum_{j=1}^{p-1} x^n \left( (j+1)^n - j^n - 1 \right) = p x^n \left( p^{n-1} - 1 \right).\]  Then, $\fc{p^i}{2} = p^{-1} ((x+y)^{p^i} - x^{p^i} - y^{p^i})$, and hence the associated half-Weil form over $\F_p$ is $-\fc{p^i}{1}$.  But, when $n$ is not of the form $p^i$, $\fc{n}{2} = (x+y)^n - x^n - y^n$ exactly, and hence reducing modulo $p$ gives $0$.
\end{proof}

\section{Obstructions and the calculation of $C^k(\G_a; \G_m) \times \spec \Z_{(2)}$}

\begin{thm}\label{thm:ArtinHasseExtension}
Write $\nu_p(n)$ for the order of $p$-divisibility of the integer $n$, and recall the function $\phi(n, k)$ from \prettyref{def:zeta}.  Let $E_p(t)$ be the Artin-Hasse exponential, a $p$-integral series defined by \[E_p(t) = \exp \left( \sum_{k=0}^\infty \frac{t^{p^k}}{p^k} \right).\]  Then, when $\nu_p \phi(n, k) < \nu_p(n)$, the power series $\tilde{\fc{n}{k}} = (\delta^1)^{\circ(k-1)} E_p(c x^n)^{p^{-\nu_p \phi(n, k)}}$ is a multiplicative extension of $c \fc{n}{k}$ over an $\F_p$-algebra.
\end{thm}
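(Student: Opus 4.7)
The plan is to work entirely in the logarithmic presentation and exploit the classical $p$-integrality of the Artin-Hasse series. Writing $\log E_p(cx^n) = \sum_{j\ge 0} c^{p^j}x^{np^j}/p^j$, I would interpret the fractional power as $E_p(cx^n)^{p^{-\nu_p \phi(n,k)}} = \exp\!\bigl(p^{-\nu_p \phi(n,k)}\sum_{j\ge 0} c^{p^j}x^{np^j}/p^j\bigr)$ over $\Q[c][[x]]$. The iterated coboundary $(\delta^1)^{\circ(k-1)}$ is an alternating product over nonempty subsets $I \subseteq \{1,\dots,k\}$ of a one-variable function, so passing through $\log$ converts it into the corresponding additive alternating sum $\Delta^{k-1}$. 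By \prettyref{def:zeta}, $\Delta^{k-1}(x^m) = \pm\phi(m,k)\,\fc{m}{k}$, so
\[
\log\tilde{\fc{n}{k}} \;=\; \pm\sum_{j\ge 0} \frac{c^{p^j}\,\phi(np^j,k)}{p^{\,j+\nu_p \phi(n,k)}}\,\fc{np^j}{k}.
\]

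The $j = 0$ summand is $\pm c \cdot u \cdot \fc{n}{k}$ where $u = \phi(n,k)/p^{\nu_p \phi(n,k)} \in \Z_{(p)}^{\times}$; after absorbing this unit into $c$, we recover the prescribed leading additive part, which establishes the tangent-space claim. For the tail, I need the combinatorial inequality
\[
\nu_p \phi(np^j,k)\ \ge\ j+\nu_p \phi(n,k)\qquad(j \ge 1),
\]
which is the central obstacle. My approach is to start from a $k$-part partition $\lambda$ of $n$ realizing $\nu_p\binom{n}{\lambda} = \nu_p \phi(n,k)$, and to build a $k$-part partition $\mu$ of $np^j$ by $p^j$-scaling the parts of $\lambda$ where the base-$p$ digits of $n$ afford ``room''; the hypothesis $\nu_p \phi(n,k) < \nu_p n$ is precisely the guarantee that such room exists. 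Kummer's carry count for $\nu_p\binom{np^j}{\mu}$ then reads off the required lower bound. Once each summand is $p$-integral, the whole log series lies in $\Z_{(p)}[c][[x_1,\dots,x_k]]$, and I would then recognize it as an Artin-Hasse-type expression (the $c^{p^j}/p^j$-shape in each summand mirrors the defining exponent of $E_p$), so its exponential $\tilde{\fc{n}{k}}$ is $p$-integral by the same classical Dwork-Dieudonn\'e argument that establishes integrality of $E_p$ itself.

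Finally, the $k$-variate symmetric $2$-cocycle identities of \prettyref{def:Multiextensions} follow by a formal, content-free verification: an alternating product over nonempty subsets $I \subseteq \{1,\dots,k\}$ of any single-variable power series is manifestly symmetric in $x_1,\dots,x_k$, and the $2$-cocycle identity in any pair of variables reduces to the statement that every one of the four terms in the identity, expanded as an alternating product over subsets, is paired with an opposite-sign twin that cancels it. (This can be checked once and for all on $\sum_I (-1)^{|I|+1} f(\sum_{i \in I} x_i)$, independently of which $f$ is plugged in.) Together with the leading-term calculation, this exhibits $\tilde{\fc{n}{k}}$ as an element of $C^k(\G_a;\G_m)(R)$ whose image in the tangent space is $c\fc{n}{k}$, completing the proof; the only hard work has been packed into the combinatorial inequality flagged above.
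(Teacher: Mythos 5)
Your reduction to the logarithm, the identification $\log\tilde{\fc{n}{k}} = \pm\sum_{j\ge 0} c^{p^j}\,\phi(np^j,k)\,p^{-(j+\nu_p\phi(n,k))}\,\fc{np^j}{k}$, and the formal verification of symmetry and the $2$-cocycle identity are all fine. The gap is the ``central obstacle'' you flag: the inequality $\nu_p\phi(np^j,k)\ \ge\ j+\nu_p\phi(n,k)$ for $j\ge 1$ is false. By the K\"ummer-type formula quoted in the paper itself, $\nu_p\phi(m,k)=\max\{0,\lceil (k-\sigma_p(m))/(p-1)\rceil\}$, and since $\sigma_p(np^j)=\sigma_p(n)$, one has $\nu_p\phi(np^j,k)=\nu_p\phi(n,k)$ for \emph{every} $j$. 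A concrete counterexample satisfying the theorem's hypothesis: $p=2$, $k=2$, $n=4$ has $\nu_2\phi(4,2)=1<2=\nu_2(4)$, yet $\nu_2\phi(8,2)=\nu_2\gcd(8,28,56,70)=1$, not $\ge 2$. Your proposed proof of the inequality also cannot work structurally: $\phi$ is a gcd, so exhibiting one $k$-part partition $\mu$ of $np^j$ with large $\nu_p\binom{np^j}{\mu}$ bounds $\nu_p\phi(np^j,k)$ from \emph{above}, not below --- and indeed $p^j$-scaling a carry-minimal $\lambda$ gives $\nu_p\binom{np^j}{p^j\lambda}=\nu_p\binom{n}{\lambda}=\nu_p\phi(n,k)$, which is exactly what forces the equality above. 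The hypothesis $\nu_p\phi(n,k)<\nu_p(n)$ therefore cannot play the ``room for scaling'' role you assign it.

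Consequently the $j\ge1$ summands of $\log\tilde{\fc{n}{k}}$ are genuinely non-$p$-integral, and termwise integrality of the logarithm is the wrong target: this is precisely the situation of the Artin--Hasse series itself, whose logarithm $\sum_k t^{p^k}/p^k$ is not integral even though $E_p$ is. The integrality you need is a property of the exponential as a whole, established by a Dwork-type criterion ($F\in 1+(\mathbf{x})\Q_p\llbracket\mathbf{x}\rrbracket$ is $p$-integral iff $F(x_1^p,\ldots,x_k^p)/F(\mathbf{x})^p\equiv 1$ mod $p$), where the denominator $p^{j+1}$ in the $(j+1)$-st log term is cancelled against the $p$-th power of the $j$-th term via the Frobenius congruence $(\sum_{i\in I}x_i)^{np^{j+1}}\equiv(\sum_{i\in I}x_i^p)^{np^j}$ mod $p$; that verification, not a valuation bound on $\phi(np^j,k)$, is where the content lies. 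Your final step is also internally inconsistent as written: the exponential of a $p$-integral series is not in general $p$-integral (factorial denominators), and the Dwork--Dieudonn\'e lemma takes as hypothesis the congruence above, not integrality of the log, so invoking it after ``once each summand is $p$-integral'' proves nothing even granting the false inequality. For comparison, the paper does not reprove this statement at all --- its proof is a citation to Ando--Hopkins--Strickland (Corollary 3.22), whose argument runs through the integrality and functional equation of the Artin--Hasse exponential in the manner just sketched rather than through your inequality; you would need to either reproduce that Dwork-style argument or cite it, and then locate where the hypothesis $\nu_p\phi(n,k)<\nu_p(n)$ is actually used (it is not used to make the log integral).
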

\begin{proof}
See Ando, Hopkins, and Strickland~\cite[Corollary 3.22]{AHS}.
\end{proof}

\begin{rem}
Ando, Hopkins, and Strickland also provide the equation \[\nu_p \phi(n, k) = \max \left\{ 0, \left\lceil \frac{k-\sigma_p(n)}{p-1} \right\rceil\right\}\] to aid in computing facts about this power series, where $\sigma_p(n)$ is the $\N$-valued digital sum of $n$ in radix $p$.  This is an immediate consequence of work of K\"ummer~\cite{Kummer}.
\end{rem}

\begin{thm}\label{thm:WeilObstructions}
Every additive cocycle $u_+$ over a ring $S$ of characteristic $p$ can be written in the form
\[
u_+ = \sum_{\substack{n, m \\ \ell(I) = k-3}} r_{n, m, I} \fc{n}{2} x_3^m (x_4, \ldots, x_k)^I
,\]
where $r_{n, m, I}$ is an element in $S$.  If $r_{p^n, p^m, I} \ne r_{p^m, p^n, I}$ for any choice of    $m$, $n$, and $I$, then any multiplicative $2$-cocycle $1 + b u_+ + o(|u_+|)$ must satisfy $b^p = 0$.
\end{thm}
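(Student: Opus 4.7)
The plan is to invoke the half-Weil form machinery of the preceding section to convert an obstruction on $u$ into a calculation in the tangent spectral sequence. The initial claim that $u_+$ admits the displayed expansion is routine: holding $x_3, \ldots, x_k$ fixed, $u_+$ restricts to a symmetric additive $2$-cocycle in $(x_1, x_2)$, and by the classification after \prettyref{def:zeta} (compare also \prettyref{thm:AdditiveStructure}) it is a linear combination of the $\fc{n}{2}(x_1, x_2)$ with coefficients in $S\llbracket x_3, \ldots, x_k\rrbracket$; expanding those coefficients as monomials in the remaining variables yields the displayed form.

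For the obstruction, I would attach to $u = 1 + b u_+ + o(|u_+|)$ the associated $(k-1)$-variate half-Weil form $e$, which exists as a not-necessarily-symmetric multiplicative $2$-cocycle by the theorem following \prettyref{def:HalfWeilPairing}. By \prettyref{lem:AdditiveWeilForm} its leading additive part is $b e_+$, and by \prettyref{lem:WeilCalculation} the half-Weil substitution annihilates each summand $\fc{n}{2}$ unless $n = p^j$, in which case it produces $-x_1^{p^j}$. Therefore
\[
e_+ \;=\; -\sum_{j, m, I} r_{p^j, m, I} \, x_1^{p^j} x_2^{m} (x_3, \ldots, x_{k-1})^I.
\]
Fixing $m = p^{m'}$ and a multi-index $I$, the two summands with exponent pairs $(p^n, p^{m'})$ and $(p^{m'}, p^n)$ in the first two variables combine into the asymmetric two-variate class $-\bigl[r_{p^n, p^{m'}, I}\, a_n a_{m'} + r_{p^{m'}, p^n, I}\, a_{m'} a_n\bigr]$, tensored with an inert monomial in the remaining variables.

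Next I would apply the tangent spectral sequence differential of \prettyref{sec:TangentSS}, extended to the asymmetric $(k-1)$-variate cochain complex: the same Frobenius-like computation (with the monomial in the remaining variables riding along as an inert factor) yields $d_{p^n + p^{m'}}(c\, a_n a_{m'}) = c^p\, \Omega_{n, m'}$ for a nonzero asymmetric class $\Omega_{n, m'}$ that is antisymmetric under swapping its two factors. Applied to $b e_+$, and using $(br)^p = b^p r^p$ in the $\F_p$-algebra $S$, this returns
\[
b^p \bigl(r_{p^n, p^{m'}, I}^p - r_{p^{m'}, p^n, I}^p\bigr) \cdot \Omega_{n, m'}
\]
times the inert monomial. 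Since $e$ genuinely lifts through every page of the spectral sequence, this differential must vanish, which (Frobenius being a ring endomorphism of $S$) rearranges to $b^p\bigl(r_{p^n, p^{m'}, I} - r_{p^{m'}, p^n, I}\bigr)^p = 0$; under the hypothesis that some such difference is nonzero, this is the obstruction $b^p = 0$ asserted by the theorem. The chief technical obstacle is justifying the transport of the differentials from \prettyref{sec:TangentSS} into the asymmetric multivariate setting and verifying that their antisymmetry in the factor indices is precisely what produces the antisymmetric combination of $r$'s at the end of the argument.
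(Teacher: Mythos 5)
Your proposal is correct and follows essentially the same route as the paper: construct the half-Weil form $e$ of $u$, use \prettyref{lem:AdditiveWeilForm} and \prettyref{lem:WeilCalculation} to see that $[e_+]$ has an asymmetric component on $a_n a_m$ with coefficient (a multiple of $b$ times) $r_{p^n,p^m,I}-r_{p^m,p^n,I}$, and then kill it with the tangent spectral sequence obstruction. The ``transport to the asymmetric multivariate setting'' you flag as the main technical obstacle is handled in the paper simply by viewing $[e_+]$ as a class in $H^2(\G_a;\G_a)(S[x_3,\ldots,x_{k-1}])$, i.e.\ absorbing the inert variables into the coefficient ring, and then citing \prettyref{cor:TangentSSUpshot} rather than re-running the differential computation.
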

\begin{proof}
Select such a cocycle $u$, along with indices $n$, $m$, and $I$ so that $r_{p^n, p^m, I} \ne r_{p^m, p^n, I}$ and assume $r_{p^n, p^m, I} \ne 0$.  Construct the associated half-Weil pairing $e$ as in \prettyref{def:HalfWeilPairing}; by assumption and \prettyref{lem:WeilCalculation} $e_+$ is nonzero and the projection of the cohomology class $[e_+] \in H^2(\G_a; \G_a)(S[x_3, \ldots, x_{k-1}])$ onto the module factor $S \{ a_n a_m \}$ is nonzero with coefficient $r_{p^n, p^m, I} - r_{p^m, p^n, I} \ne 0$.  Hence, the tangent spectral sequence and \prettyref{cor:TangentSSUpshot} dictate that the leading coefficient of $e$ must have zero $p$th power, as $e$ is a multiplicative extension of $e_+$.  By distributivity and the characterization of $e_+$ in \prettyref{lem:AdditiveWeilForm}, this coefficient is an integer multiple of $b$.
\end{proof}

We need a small lemma to interrelate these obstructions before we can perform the promised $2$-primary calculation:
\begin{lem}
Fix an integer $n$ and construct a graph whose nodes are labeled by unordered tuples of powers of $2$ whose sum is $n$, and insert an edge from a node of tuple length $\ell$ to a node of tuple length $\ell-1$ if exactly two entries of $\ell$ can be summed together to produce the second tuple.  Every subgraph consisting of all nodes of lengths $\ell$ and $\ell-1$ is connected, i.e., for any two tuples of length $\ell$, we can find a path in the graph connecting them.
\end{lem}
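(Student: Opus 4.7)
The plan is to proceed by strong induction on $n$, asserting that for every $n$ and every $\ell$, the bipartite graph between length-$\ell$ and length-$(\ell-1)$ power-of-$2$ partitions of $n$ is connected. The base cases $n \le 2$ are trivial by direct inspection, since only a handful of partitions exist.

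For the inductive step, given length-$\ell$ partitions $\lambda$ and $\mu$ of $n$, the easy case is when $\lambda$ and $\mu$ share a common part $2^M$: removing this common part produces length-$(\ell-1)$ partitions $\lambda'$, $\mu'$ of $n - 2^M$, which are joined by a path by the inductive hypothesis applied at the smaller value $n - 2^M$ (for the lengths $\ell-1$ and $\ell-2$). Reinserting $2^M$ as a spectator part into every node of this path lifts it to a path between $\lambda$ and $\mu$ in the original bipartite graph, since every combine move there is preserved by the insertion.

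The heart of the argument is the case where $\lambda$ and $\mu$ have disjoint multiset supports. I would aim to produce a sequence of combine-then-split moves on $\lambda$ that eventually yields a length-$\ell$ partition $\tilde\lambda$ sharing some part with $\mu$, at which point the previous case completes the proof. Concretely, I would pick a part $2^M$ of $\mu$ absent from $\lambda$ and seek a short sequence producing a $2^M$ in $\lambda$: either by combining two copies of $2^{M-1}$ when $\lambda$ possesses them, or by splitting a $2^{M+1}$ when $\lambda$ possesses one, pairing the chosen operation with a complementary split or combine supplied by some repeated pair elsewhere in $\lambda$.

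The main obstacle will be the degenerate subcases where neither of these strategies applies directly --- for example, when every repeat of $\lambda$ sits at an exponent far from $M$, or when the unique repeat is at exponent $M-1$ so that the would-be combine-split is trivial. I would handle these via a secondary induction on the gap $|M - M_\lambda|$, where $M_\lambda$ is the exponent in $\operatorname{supp}(\lambda)$ closest to $M$: a carefully chosen preparatory combine-then-split (which may temporarily increase the symmetric difference with $\mu$) narrows this gap while preserving enough repeated structure to iterate. Checking that such preparatory moves are always available, and that the gap truly decreases, is where I expect the delicate case bookkeeping to live, and it is the step I anticipate occupying the bulk of the technical work.
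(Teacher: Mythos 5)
Your first reduction is fine: when $\lambda$ and $\mu$ share a part $2^M$, deleting it and lifting a path for $n-2^M$ (lengths $\ell-1$, $\ell-2$) back along the spectator insertion is a correct and complete argument, since a combine of two equal parts remains a legal edge after reinserting $2^M$. The problem is that the disjoint-support case, which is the entire content of the lemma, is not actually proved: you describe an intention (``I would aim to produce \ldots,'' ``I would handle these via a secondary induction on the gap $|M-M_\lambda|$'') and explicitly defer the verification that the preparatory moves exist and that the gap decreases. That verification is exactly where the difficulty lives, and as stated the plan has unresolved failure modes. Every excursion from a length-$\ell$ node must \emph{begin} with a combine of a repeated pair, so ``splitting a $2^{M+1}$'' is only available after some other repeat has been consumed; if the unique repeated pair of $\lambda$ is itself the pair of $2^{M+1}$'s you intended to split, or sits at an exponent whose combination interferes with the part you are trying to create, the one-step recipe fails, and nothing in the proposal shows that the ``preparatory combine-then-split'' both exists and strictly decreases a well-founded quantity (you even allow it to increase the symmetric difference with $\mu$, so some explicit potential must be exhibited and monitored). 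Without that, the argument is a plausible plan, not a proof.

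For comparison, the paper avoids all of this case analysis with a single monotone-descent argument: encode each tuple by its multiplicity vector $(c_j)$ with $c_j$ counting copies of $2^j$, totally order these vectors lexicographically, and, given two distinct same-length tuples, perform one combine at the \emph{first} exponent where they differ followed by one split at any higher occupied exponent. Two facts make this move always available: since the two tuples agree below the first differing exponent $i$ and have equal weight and equal length, the counts at $i$ differ by an even number (so the larger one has at least two copies of $2^i$ to combine), and the combining tuple must possess some part above $2^i$ to split (otherwise its weight above $i$ would be too small). The resulting tuple lies strictly between the two in the order, so induction on the finite total order connects them. If you want to salvage your outline, the cleanest fix is to replace the ad hoc induction on $|M-M_\lambda|$ by exactly this kind of potential: target the first exponent where $\lambda$ and $\mu$ differ rather than an arbitrary part of $\mu$, and prove the two availability facts above; at that point your case~1 reduction becomes unnecessary, since the lexicographic induction already handles tuples with common parts uniformly.
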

\begin{proof}
Associate to such a tuple $\lambda$ the finite sequence of naturals $c^\lambda_n$ so that $c^\lambda_n$ counts the number of times $2^n$ appears in $\lambda$, and order the set of tuples $\lambda$ by the dictionary order on the associated sequences $c^\lambda_n$.  Given any two nonequal tuples $\lambda$ and $\lambda''$ of length $\ell$, we can assume $\lambda > \lambda''$; we want to construct a tuple $\lambda'$ with $\lambda > \lambda' > \lambda''$ by following edges in the graph.  An edge from $\lambda$ of length $\ell$ to $\mu$ of length $\ell - 1$ corresponds in sequences to \[c^\mu_n = \begin{cases} c^\lambda_n & n \ne i, \\ c^\lambda_i - 2 & n = i, \\ c^\lambda_{i+1} + 1 & n = i+1\end{cases}\] for some selected index $i$.  So, as $\lambda < \lambda''$, we select the first differing index $i$ and remove $2$ from $c^\lambda_i$, add $1$ to $c^\lambda_{i+1}$, then select any index $j > i$ and remove $1$ from $c^\lambda_j$, adding $2$ to $c^\lambda_{j-1}$.  The resulting sequence describes a new tuple $\lambda'$ satisfying $\lambda < \lambda' < \lambda''$.  Induction on the imposed ordering gives the lemma.\footnote{This is a ghost of the argument used in \prettyref{thm:GaRepresentingRing}.}
\end{proof}
\begin{cor}
If $u_+$ is an additive $2$-cocycle over $\F_2$, then if $u_+ \ne \fc{n}{2}$ for some $n$, $u_+$ is obstructed by \prettyref{thm:WeilObstructions}.
\end{cor}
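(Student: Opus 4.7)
The strategy is to prove the contrapositive: if $u_+$ is an additive $2$-cocycle over $\F_2$ for which the Weil obstruction of \prettyref{thm:WeilObstructions} fails to trigger (so that $r_{2^n, 2^m, I} = r_{2^m, 2^n, I}$ for every admissible triple), then $u_+$ must coincide with some $\fc{n}{2}$.

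I would begin by fixing the decomposition $u_+ = \sum_{n, m, I} r_{n,m,I}\, \fc{n}{2}\, x_3^m\, (x_4, \ldots, x_k)^I$ guaranteed by \prettyref{thm:WeilObstructions}, and then feed in both the full $\Sigma_k$-symmetry of $u_+$ as an element of $C^k(\G_a; \G_a)(\F_2)$ and the hypothesized symmetries $r_{2^n, 2^m, I} = r_{2^m, 2^n, I}$. Permuting variables rewrites the decomposition in a different valid form, and equating the two produces linear identities among the $r_{n,m,I}$; concatenated with the Weil-swap identities, these identify the coefficients attached to many different power-of-$2$ partitions of the common homogeneous degree.

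The crux is translating these equalities into the graph-theoretic language of the preceding lemma: each nonzero summand is labeled by a power-of-$2$ partition of the total degree (the entries being the exponents of $x_1, \ldots, x_k$ appearing in that summand), and the Weil-obstruction swap $r_{2^n, 2^m, I} \leftrightarrow r_{2^m, 2^n, I}$, together with a $\Sigma_k$-shuffle of slots, corresponds to a single gathering-or-splitting edge in that graph. By the connectivity lemma, the resulting equivalence relation forces $r_{n,m,I}$ to depend only on the length-class of its partition, and unwinding via \prettyref{thm:AdditiveStructure} the only such $u_+$ in $C^k(\G_a; \G_a)(\F_2)$ is the basic two-variate cocycle $\fc{n}{2}$; any cocycle supported on a partition of length three or more, or involving multiple length-two summands, would force either a violation of this uniformity or a reduction to zero.

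The main obstacle is this middle step: rigorously identifying the Weil-obstruction swap with a single edge of the partition graph. One must track how the roles of $x_1$ and $x_2$ (pinned inside $\fc{n}{2}$), of $x_3$ (pinned by the $x_3^m$ factor), and of $(x_4, \ldots, x_k)$ (pinned by the multi-index $I$) interact under the assumed symmetries, and verify that the match is with a single gathering edge rather than some multi-step combinatorial move — the connectivity lemma was stated for exactly this combinatorial move, so once the match is established the proof collapses quickly.
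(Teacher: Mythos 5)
Your proposal is correct and follows essentially the same route as the paper's own proof: decompose $u_+$ as in \prettyref{thm:WeilObstructions}, note that unobstructedness forces the swap equalities $r_{2^i,2^j,I} = r_{2^j,2^i,I}$ (combined with the $\Sigma_k$-symmetry of $u_+$), and propagate these along the power-of-two partition graph via the connectivity lemma to conclude that $u_+$ is a scalar multiple of the basic symmetric cocycle. The one step you flag as the main obstacle resolves exactly as you anticipate, except that a swap pair matches not a single edge but the gather-then-split move through the common shorter tuple $(2^i,2^j)\cup I$ --- precisely the two-layer move whose transitivity the connectivity lemma guarantees --- which is why the paper dispatches it in a single sentence.
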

\begin{proof}
Every obstruction $a_ia_j$ stemming from an application of \prettyref{thm:WeilObstructions} to a term of the form $\fc{2^i}{2} x_3^{2^j}$ can only be canceled by the appearance of a term of the form $\fc{2^j}{2} x_3^{2^i}$, and hence the entire connected component of the graph in the above lemma containing any of the tuples appearing in $u_+$ must appear, lest we produce a nontrivial obstruction.  The lemma says the graph itself is connected, hence $u_+$ must be a scalar multiple of $\fc{|u_+|}{2}$.
\end{proof}

\begin{lem}\label{lem:fcObstructions}
For $\nu_2 \phi(n, k) > \nu_2 n$, $u_+ = \fc{n}{2}$ is obstructed from extending to a multiplicative $2$-cocycle.
\end{lem}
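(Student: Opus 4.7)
The plan is to iterate the half-Weil pairing from \prettyref{def:HalfWeilPairing} to convert a hypothetical multiplicative extension of $\fc{n}{2}$ into a lower-arity cocycle whose additive leading class suffers the asymmetric obstruction of \prettyref{thm:WeilObstructions}.

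Assume, for contradiction, that $u = 1 + c \fc{n}{2} + o(n) \in C^k(\G_a; \G_m)(S)$ is a multiplicative extension over an $\F_2$-algebra $S$ with $c^2 \ne 0$. Form the half-Weil pairing $e = u(x_1, x_1, x_2, \ldots, x_{k-1})$, which by the theorem preceding \prettyref{def:HalfWeilPairing} is a $(k-1)$-variate multiplicative cocycle satisfying $\delta_1 e = u^2$. By \prettyref{lem:WeilCalculation}, the first-order additive part of $e$ is either zero (when $n$ is not a power of $2$) or a scalar multiple of $\fc{n}{1}$; in either case the relation $u^2 \equiv 1 + c^2 \fc{n}{2}^2 \pmod{o(2n)}$ determines that the true leading additive class of $e$ sits at degree $2n$ with leading coefficient proportional to $c^2$.

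Iterating this construction produces cocycles $u^{(0)} = u, u^{(1)} = e, u^{(2)}, \ldots$ of strictly decreasing arity and leading coefficients $c, c^2, c^4, \ldots$. The hypothesis $\nu_2 \phi(n, k) > \nu_2 n$, unpacked via the Kummer formula $\nu_2 \phi(n, k) = \max\{0, k - \sigma_2(n)\}$ from the remark following \prettyref{thm:ArtinHasseExtension}, is equivalent to $k > \sigma_2(n) + \nu_2 n$. This inequality is precisely the bound needed to iterate the half-Weil construction enough times that, at iteration $j = \gamma_2(n, k)$, the additive leading class of $u^{(j)}$ no longer sits in the $\fc{\cdot}{2}$-subspace exempted by the preceding corollary; its decomposition in the basis of \prettyref{thm:WeilObstructions} then has an asymmetric pair $r_{2^a, 2^b, I} \neq r_{2^b, 2^a, I}$, activating that obstruction and forcing $(c^{2^j})^2 = 0$, i.e., the square-zero relation on the stage-$\gamma_2(n, k)$ coefficient demanded by the main theorem.

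The main obstacle is verifying that an asymmetric basis element of \prettyref{thm:WeilObstructions} genuinely survives into the decomposition of $u^{(j)}_+$ at iteration $j$. This requires an inductive analog of \prettyref{lem:WeilCalculation} tracking how the half-Weil substitution $(x_1, x_2) \mapsto (x_1, x_1)$ converts products $(\fc{n}{2})^2$ into the $\fc{\cdot}{\cdot}$-basis of lower arity, together with a repetition of the gathering-graph argument from the preceding combinatorial lemma to rule out accidental cancellations between the successively generated $u^{2}$-corrections. Once this bookkeeping is in place, the square-zero relation follows immediately from \prettyref{thm:WeilObstructions}.
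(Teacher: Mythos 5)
There is a genuine gap, and it sits exactly where you flag the ``main obstacle.''  The paper's proof is a \emph{single} application of \prettyref{thm:WeilObstructions}, with no iteration: the class being obstructed is the $k$-variate cocycle $\fc{n}{k}$ (this is how \prettyref{thm:BigF2Thm} uses the lemma; a $2$-variate additive part does not even typecheck inside $C^k$ for $k>2$), and the hypothesis $\nu_2\phi(n,k) > \nu_2 n$ is equivalent to the appearance in $\fc{n}{k}$ of summands $\tau(2^{i_1},\ldots,1)$.  In the decomposition $u_+ = \sum r_{n,m,I}\,\fc{n}{2}\,x_3^m(x_4,\ldots,x_k)^I$ of \prettyref{thm:WeilObstructions} such a summand contributes a nonzero coefficient $r_{2^{i_1},2^0,I}$ whose mirror $r_{2^0,2^{i_1},I}$ necessarily vanishes, because $\fc{1}{2}=0$ --- equivalently, $[a_0]$ is never in the image of the additive half-Weil pairing of \prettyref{lem:WeilCalculation}.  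That unmatched $a_{i_1}a_0$ class is the entire content of the lemma, and your proposal never produces it: you defer it to ``an inductive analog of \prettyref{lem:WeilCalculation}'' and ``a repetition of the gathering-graph argument,'' which is precisely the step requiring proof.

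Two of the steps you do write down are also incorrect or unavailable.  First, from $\delta_1 e = u^2$ you infer that the ``true leading additive class of $e$ sits at degree $2n$ with coefficient proportional to $c^2$''; this conflates $e$ with $\delta_1 e$.  If $e_+$ vanishes, all that follows about a lower-degree leading term of $e$ is that it is an additive cocycle (its coboundary vanishes below degree $2n$), not that it is absent; and in the paper's mechanism it is the degree-$n$ class $[e_+]$ itself, fed into the tangent spectral sequence differential $d(c\,a_ia_j)=c^2(a_i^2a_{j+1}-a_{i+1}a_j^2)$ via \prettyref{cor:TangentSSUpshot}, that creates the obstruction.  Second, the iteration $u^{(0)},u^{(1)},\ldots$ is unjustified: the half-Weil construction is defined for points of $C^k(\G_a;\G_m)$, i.e.\ symmetric cocycles carrying the higher cubical structure, whereas $e$ is explicitly not necessarily symmetric, so it need not lie in $C^{k-1}(\G_a;\G_m)$ and cannot simply be fed back into \prettyref{def:HalfWeilPairing}.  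Even granting the iteration, an obstruction first seen at stage $j$ would only force $c^{2^{j+1}}=0$, which is strictly weaker than the relation $c^2=0$ required to produce the divided-power factor $\Gamma[b_{n,\gamma_2(n,k)}]$ in \prettyref{thm:BigF2Thm}; the role of $\gamma_2(n,k)$ there is to index divided-power generators, not a count of half-Weil iterations.
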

\begin{proof}
The case $\nu_2 \phi(n, k) > \nu_2 n$ corresponds exactly to the appearance of summands of the form $\tau(2^{i_1}, \ldots, 1)$ in $\fc{n}{k}$.  Applying \prettyref{thm:WeilObstructions}, we produce an obstruction of the form $a_{i_1} a_0$ with no mirror $a_0 a_{i_1}$, since $[a_0]$ is not in the image of the additive Weil pairing $e_+$.
\end{proof}

\begin{thm}\label{thm:BigF2Thm}
We compute
\begin{align*}
\sheaf{O}(\spec \F_2 \times C^k(\G_a; \G_m)) & = \F_2[z_n \mid \nu_2 \phi(n, k) \le \nu_2 n] \otimes \\
& \otimes \Gamma[b_{n, \gamma_2(n, k)} \mid \nu_2 \phi(n, k) > \nu_2 n] \otimes \\
& \otimes \F_2[b_{n, i} \mid \gamma_2(n, k) < i < D_{n, k}] / \<b_{n,i}^2\>,
\end{align*}
where $n \ge k$ ranges over integers, $D_{n, k}$ is the coefficient of the generating function \[\prod_{i=0}^\infty \frac{1}{1 - tx^{2^i}} = \sum_{n, k} D_{n, k} x^n t^k,\] and $\gamma_p(n, k) = \max\{0,\min\{k-\sigma_p(n),\nu_p(n)\}\}$ counters the number of divided power classes introduced already.
\end{thm}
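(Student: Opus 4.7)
The plan is to convert the additive description of $T_1 C^k(\G_a;\G_m) \cong C^k(\G_a;\G_a)$ supplied by \prettyref{thm:GaRepresentingRing} into a multiplicative description of $C^k(\G_a;\G_m)$ via an obstruction analysis: for every additive cocycle class that could appear as the leading term of a multiplicative cocycle, I would determine whether it lifts freely, lifts with a nilpotency restriction, or contributes a divided-power tower, and then match total counts against the generating function in the statement.

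The generators recorded in \prettyref{thm:GaRepresentingRing} are indexed by power-of-$2$ partitions of integers $n \ge k$, with the distinguished cocycle $\fc{n}{k}$ sitting at the bottom of each stratum. I would begin by splitting the $\fc{n}{k}$ classes along the dichotomy $\nu_2 \phi(n,k) \le \nu_2 n$ versus $\nu_2\phi(n,k) > \nu_2 n$. In the first case, \prettyref{thm:ArtinHasseExtension} produces an honest multiplicative cocycle $\tilde{\fc{n}{k}}$ lifting $\fc{n}{k}$ with leading coefficient free of any relation, contributing the polynomial generator $z_n$. In the second case, \prettyref{lem:fcObstructions} combined with \prettyref{thm:WeilObstructions} shows that $\fc{n}{k}$ contains a $\tau(2^{i_1},\ldots,1)$ summand which forces the leading coefficient $b$ to satisfy $b^2 = 0$; this produces the divided-power generator $b_{n,\gamma_2(n,k)}$.

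Next, I would handle the remaining additive generators, namely those indexed by non-minimal power-of-$2$ partitions of $n$. The corollary to \prettyref{thm:WeilObstructions} shows that the bivariate projection of any such cocycle fails the symmetry $r_{2^i,2^j,I} = r_{2^j,2^i,I}$, so by \prettyref{thm:WeilObstructions} its leading coefficient must square to zero; these contribute the square-zero generators $b_{n,i}$ for $\gamma_2(n,k) < i < D_{n,k}$. The exact indexing range is then verified by using the generating function $\prod_i(1-tx^{2^i})^{-1}$ to count power-of-$2$ partitions of $n$ of length exactly $k$, showing that $D_{n,k}$ gives the total number of additive strata at degree $n$ and hence that the range $\gamma_2(n,k) < i < D_{n,k}$ accounts for precisely the non-minimal partitions.

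The main obstacle I anticipate is verifying that the generator $b_{n,\gamma_2(n,k)}$ in the obstructed $\fc{n}{k}$ stratum carries genuine divided-power structure rather than being merely square-zero. The divided powers $b^{[2^m]}$ should arise from fractional-exponent Artin-Hasse lifts $E_2(c x^n)^{2^{-m}}$ for $0 \le m \le \gamma_2(n,k)$, each of which exists at the cost of a Frobenius on the leading coefficient, and the relations $b^{[i]} b^{[j]} = \binom{i+j}{i} b^{[i+j]}$ need to be checked on the nose rather than up to nilpotent correction. Once these relations are in place, a degree-by-degree dimension count against $\prod_i(1-tx^{2^i})^{-1}$ forces the presentation of $\sheaf{O}(\spec \F_2 \times C^k(\G_a;\G_m))$ to be exactly as claimed.
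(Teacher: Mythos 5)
Your proposal is correct and follows essentially the same route as the paper: the paper's proof likewise splits the additive generators from \prettyref{thm:GaRepresentingRing} into the freely-extending $\fc{n}{k}$ (handled by \prettyref{thm:ArtinHasseExtension}), the $\fc{n}{k}$ obstructed by \prettyref{lem:fcObstructions}, and the remaining $\tau(\lambda)$ classes obstructed by \prettyref{thm:WeilObstructions}, identifying these with the three tensor factors exactly as you do. The divided-power verification you flag as a potential obstacle is not spelled out in the paper either, so your plan matches its level of rigor and its structure.
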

\begin{proof}
These tensor factors correspond, in order, to the additive cocycles $\fc{n}{k}$ which extend freely, to the additive cocycles $\fc{n}{k}$ which are obstructed by \prettyref{lem:fcObstructions}, and to the remaining modular additive cocycles $\tau(\lambda)$ not already belonging to a divided power structure, as these are also obstructed by \prettyref{thm:WeilObstructions}.
\end{proof}

\begin{cor}\label{cor:BigZ2Thm}
We compute the $2$-primary component to be
\begin{align*}
\sheaf{O}(\spec \Z_{(2)} \times C^k(\G_a; \G_m)) & = \Z_{(2)}[z_n \mid \nu_2 \phi(n, k) \le \nu_2 n] \otimes \\
& \otimes \Gamma[b_{n, \gamma_2(n, k)} \mid \nu_2 \phi(n, k) > \nu_2 n] \otimes \\
& \otimes \Z_{(2)}[b_{n, i} \mid \gamma_2(n, k) < i < D_{n, k}] / \<2b_{n,i}, b_{n,i}^2\>.
\end{align*}
\end{cor}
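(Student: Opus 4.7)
The plan is to bootstrap the $\Z_{(2)}$-computation from three inputs: the $\F_2$-computation of \prettyref{thm:BigF2Thm}, the integral additive classification of \prettyref{thm:GaRepresentingRing}, and the $2$-integral Artin-Hasse extensions of \prettyref{thm:ArtinHasseExtension}. The three tensor factors in the claimed presentation correspond bijectively to three behaviors of additive generators under the obstruction analysis: the $\fc{n}{k}$ that extend freely, the $\fc{n}{k}$ that are obstructed but still support divided power lifts, and the purely modular generators $\tau(\lambda)$ that are obstructed into square-zero form. Throughout, the $\F_2$-reduction of any candidate presentation is pinned down by \prettyref{thm:BigF2Thm}, and its rationalization is trivial (over $\Q$ every additive cocycle extends freely), so the task is to identify explicit $\Z_{(2)}$-integral generators and verify the advertised relations.

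For the polynomial factor, take $z_n$ to be the $\Z_{(2)}$-integral multiplicative extension $\tilde{\fc{n}{k}}$ produced from the Artin-Hasse exponential; the condition $\nu_2 \phi(n,k) \le \nu_2 n$ is exactly the $2$-integrality criterion, and algebraic independence follows because the $z_n$ are already algebraically independent after rationalization. For the divided power factor, when $\nu_2 \phi(n,k) > \nu_2 n$ the iterated coboundary of $E_2(c x^n)^{2^{-j}}$ remains $2$-integral for $0 \le j \le \gamma_2(n,k)$ and fails beyond; the successive levels assemble into a $\Z_{(2)}$-integral divided power structure on $b_{n,\gamma_2(n,k)}$ and hence the factor $\Gamma_{\Z_{(2)}}[b_{n,\gamma_2(n,k)}]$. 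For the torsion factor, the generators $b_{n,i}$ with $\gamma_2(n,k) < i < D_{n,k}$ already satisfy $2 b_{n,i} = 0$ by \prettyref{thm:GaRepresentingRing}, since they arise from modular-only additive cocycles with no integral lift; the relation $b_{n,i}^2 = 0$ is forced by \prettyref{thm:WeilObstructions} applied to the asymmetric half-Weil pairings of the underlying partitions $\tau(\lambda)$. To finish, one reduces modulo $2$ and invokes \prettyref{thm:BigF2Thm} to confirm that no further generators or relations are possible.

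The main obstacle is promoting the square-zero relation $b_{n,i}^2 = 0$ from the $\F_2$-context of \prettyref{thm:WeilObstructions} to $\Z_{(2)}$. The Weil form analysis is stated in characteristic $p$ and a priori only gives $b_{n,i}^2 \equiv 0 \pmod 2$; writing $b_{n,i}^2 = 2c$ and multiplying by $2$ kills the left side via $2 b_{n,i} = 0$, so $c$ is at most a $4$-torsion, $2$-divisible class, but neither of these properties alone forces $c = 0$ in a $\Z_{(2)}$-algebra. The cleanest route is to re-run the obstruction argument directly over $\Z_{(2)}$ by exploiting that $b_{n,i}$ is already $2$-torsion and so any monomial witnessing a nonzero $c$ factors through a characteristic-$2$ subalgebra, inside which the half-Weil calculation of \prettyref{lem:WeilCalculation} applies verbatim and forces $c = 0$. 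A cleanup step then cross-checks the result against the degree-wise structure of \prettyref{thm:GaRepresentingRing} to rule out any residual class of degree $2|b_{n,i}|$ masquerading as a $2$-divisible remainder.
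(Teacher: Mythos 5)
Your proposal is correct and follows essentially the same route as the paper, which deduces the corollary directly from \prettyref{thm:BigF2Thm} (giving the shape of the answer mod $2$) together with \prettyref{thm:GaRepresentingRing} (forcing the torsion relations to be exactly $2b_{n,i}$ rather than $2^j b_{n,i}$). Your additional care --- exhibiting integral generators via \prettyref{thm:ArtinHasseExtension} and worrying about lifting $b_{n,i}^2 = 0$ from $\F_2$ to $\Z_{(2)}$ --- merely fills in details that the paper's ``follows immediately'' leaves implicit.
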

\begin{proof}
This follows immediately from \prettyref{thm:BigF2Thm}, which gives the general structure of the answer, and \prettyref{thm:GaRepresentingRing}, which shows that the $j$ in the $2^j b_{n, i}$ in the quotient must be a $1$.
\end{proof}

\begin{rem}
Using the structure of \prettyref{thm:AdditiveStructure} in the low dimensional case of $k = 3$, we recover for all primes $p$ the original computation of $C^3(\G_a; \G_m)$ of Ando, Hopkins, and Strickland~\cite[Section 3]{AHS}.
\end{rem}

\section{Outro}

The study of this scheme was motivated by a paper of Ando, Hopkins, and Strickland~\cite{AHS}, and so now we reflect on its relation to their work, its relation to topology, and what's left to pin down.

\begin{rem}
The core of their work is to compare the functors $\spec E_0 BU\<2k\>$ and $C^k(\spf E^0 \CP^\infty; \G_m)$ for certain (co)homology theories $E$, where $BU\<2k\>$ denotes the $(2k-1)$-connected cover of $BU \times \Z$, the representing space for complex $K$-theory.  The multiplicative structure on $E_0 BU\<2k\>$ arises from the destabilization of the Whitney sum of stable virtual bundles, and in the case that $E_0 BU\<2k\>$ is even-concentrated, $\spec E_0 BU\<2k\>$ makes sense.  They reduce to the cases $E = H\Q$ and $E = H\F_p$, where $\spf E^0 \CP^\infty \cong \G_a$, and they complete the proof by explicitly calculating $\sheaf{O}(C^k(\G_a; \G_m))$ for $k \le 3$ and recalling the previously known calculation of $H^*(BU\<2k\>; \F_p)$ due to Singer~\cite{Singer}.

The computation in this paper is an attempt to compare $\sheaf{O}(C^k(\G_a; \G_m))$ and $H_* BU\<2k\>$ for $k > 3$, where we immediately run into trouble.  Singer's calculation describes $H^* BU\<2k\>$ as a quotient of $H^* BU$ tensored together with a certain subalgebra of $H^* K(\Z, 2k-3)$ which contains the class $\operatorname{Sq}^7 \operatorname{Sq}^3 \iota_{2k-3}$ for $k > 3$, which is of odd cohomological degree.  The usual supercommutativity present in algebraic topology presents an obstacle to the immersion of the ring $H_* BU\<2k\>$ into algebraic geometry, which traditionally takes as input only commutative rings, and so we must modify what ring we expect to compare to $\sheaf{O}(C^k(\G_a; \G_m))$.

Calculational experiments with Mathematica have shown that the graded ranks of indecomposables in $H^*(BU\<2k\>; \F_2)$ match those of the indecomposables in $\sheaf{O}(\spec \F_2 \times C^k(\G_a; \G_m))$ through some $240$ bidegrees after we delete the closure of the odd dimensional classes under the action of the Steenrod algebra.
\end{rem}

\begin{rem}
The construction of the map $\spec H_*(BU\<2k\>; \F_2) \to C^k(\G_a; \G_m) \times \spec \F_2$ described by Ando, Hopkins, and Strickland admits a certain compatibility with the Steenrod algebra suggested to be present by the above brute-force computation.  The module $H_*(BU\<2k\>; \F_2)$ is a coalgebra over the dual Steenrod algebra almost by definition, and the scheme $C^k(\G_a; \G_m)$ carries an action of the scheme $\underline{\operatorname{Aut}}(\G_a)$ of group automorphisms of the additive formal group.  Long-standing work identifying the role of cohomology operations / homology cooperations in the context of chromatic homotopy theory has shown that the dual Steenrod algebra occurs as the ring of functions on $\underline{\operatorname{Aut}}(\G_a)$, and hence the coaction of the dual Steenrod algebra on $H_*(BU\<2k\>; \F_2)$ can be seen as an action of $\underline{\operatorname{Aut}}(\G_a)$ on $\spec H_*(BU\<2k\>; \F_2)$.  Moreover, the map $\spec H_*(BU\<2k\>; \F_2) \to C^k(\G_a; \G_m) \times \spec \F_2$ is seen to be $\underline{\operatorname{\Aut}}(\G_a)$-equivariant.

To use their map to form the comparison of $H_*(BU\<2k\>; \F_2)$ and $C^k(\G_a; \G_m) \times \spec \F_2$ for $k > 3$, we need to be able to describe it in some detail, and its equivariance with respect to this action greatly rigidifies it, provided we can calculate the $\underline{\operatorname{Aut}}(\G_a)$-action on both of these objects.  The description of the action on the nilpotent part of $C^k(\G_a; \G_m) \times \spec \F_2$ is quite easy to calculate, but the action on the free part is not known at this time.
\end{rem}

\begin{rem}
The Adams splitting of the connective $K$-theory spectrum $ku$ is an important structural fact in stable homotopy theory.  There is a spectrum $BP$ occuring as the minimal summand in the $p$-localization of the complex bordism spectrum $MU$, and Wilson~\cite{Wilson} describes a sequence of approximating spectra \[BP\<\infty\> \to \cdots \to BP\<n\> \to \cdots \to BP\<1\> \to BP\<0\>,\] with $BP\<\infty\> \simeq BP$, $BP\<0\> \simeq H\Z_{(p)}$, and $\pi_* BP\<n\> \cong \Z_{(p)}[v_1, \ldots, v_n]$ with $|v_n| = 2(p^n-1)$.  The folk theorem states that as ring spectra we have a splitting \[L_p ku \simeq \bigvee_{i=0}^{p-2} \Sigma^{2i} BP\<1\>.\]

That is, the data in connective $K$-theory falls neatly into bands described by these truncated Brown-Peterson summands.  In the previous paper~\cite{HLP}, as seen in part in \prettyref{fig:Stratification}, we witnessed a similar banding in the data, described in the $0$th stratum by power-of-$p$ multi-indices and in the $n$th stratum by distance leftward (i.e., in decreasing dimension) from the power-of-$p$ band.  Something similar happens in our \prettyref{thm:WeilObstructions}; it's a necessary hypothesis that we be working in the band one step leftward of the power-of-$p$ band, otherwise the obstruction produced by the half-Weil pairing is always $0$.

It would be interesting (and likely important) to understand what subfunctor $\spec H_* \Omega^{\infty-k} BP\<1\>$ represents and what of our methods are more appropriately cast in that language.  In fact, it is an interesting question what $\spec H_* \Omega^{\infty-k} BP\<k'\>$ represents in general, and how these are assembled from the even further split objects $\spec H_* Y_k$.
\end{rem}

\begin{rem}
One idea unexploited in this paper is Cartier duality.  For an even-concentrated $H$-space $X$ and even periodic ring spectrum $E$, both the homology $E_0 X$ and cohomology $E^0 X$ are Hopf algebras, and the duality between their multiplications and diagonals is encoded in the algebro-geometric formula $\underline{\Hom}(\spf E^0 X, \G_m) \cong \spec E_0 X$.  In general, the object $\underline{\Hom}(\spf E^0 X, \G_m)$ is called the Cartier dual of the group scheme $\spf E^0 X$.  Our calculation in \prettyref{thm:BigF2Thm} demonstrates that $\spec \Z_{(2)} \times C^k(\G_a; \G_m)$ has a well-behaved Cartier dual $C_{k,(2)}(\G_a)$ satisfying $\underline{\Hom}(C_{k,(2)}(\G_a), \G_m) \cong C^k(\G_a; \G_m) \times \spec \Z_{(2)}$, and we expect these congruences to match up in the sense that the following diagram should commute:
\begin{center}
\begin{tikzpicture}[
        normal line/.style={-stealth},
    ]
    \matrix (m) [matrix of math nodes, 
         row sep=2em, column sep=3em,
         text height=1.5ex, 
         text depth=0.25ex]{
\spec H_*(BU\<2k\>; \F_2) & \spec \F_2 \times C^k(\G_a; \G_m) \\
\underline{\Hom}(\spf H^*(BU\<2k\>; \F_2), \G_m) & \underline{\Hom}(C_{k,(2)}(\G_a), \G_m). \\
    };
    \path[normal line]
        (m-1-1) edge (m-1-2)
                edge[style={double,-}] (m-2-1)
        (m-1-2) edge[style={double,-}] (m-2-2)
        (m-2-1) edge (m-2-2);
\end{tikzpicture}
\end{center}

Then, because $ku$ is a ring spectrum and $H\F_2$ has K\"unneth isomorphisms, we should expect that $H_*(BU\<2k\>; \F_2)$ assemble into a Hopf ring as $k$ varies.  The induced structure on formal schemes is harder to understand; that $\spf$ and $\spec$ are arrow-reversing indicates that $\spf H_*(BU\<2*\>; \F_2)$ should assemble into a ``coring scheme,'' a somewhat unfamiliar object.  However, the dual schemes $\spf H^*(BU\<2*\>; \F_2)$ assemble into a graded ring scheme, and using Cartier duality, understanding these objects should in turn give descriptions of the original homological objects of interest.  This program is outlined in part by Ando, Hopkins, and Strickland~\cite[Remark 2.32]{AHS} in their original paper.
\end{rem}

\bibliography{mult-cocycles}{}
\bibliographystyle{plain}

\end{document}